    \def\cD{\mathcal D}  \def\cF{\mathcal F}        \def\cN{\mathcal N}          
\def\fm{\mathfrak{m}}
\def\fp{\mathfrak{p}}
\def\ff{\mathfrak{f}}
\def\fa{\mathfrak{a}}
\def\fp{\mathfrak{p}}
\def\Z{{\mathbb Z}} \def\R{{\mathbb R}} \def\F{{\mathbb F}} \def\N{{\mathrm N}} \def\C{{\mathbb C}} \def\Q{{\mathbb Q}}
\newcommand{\dd}{\mathrm{d}}
\newcommand{\p}{\mathfrak{p}}
\newcommand{\n}{M_{d,k}}
\newcommand{\kommentar}[1]{}
\newcommand{\legendre}[2]{\left(\frac{#1}{#2}\right)}
\newcommand{\bquartic}[2]{\overline{\chi}_{(#2)}(#1)}
\DeclareMathOperator{\tr}{Tr}
\DeclareMathOperator{\re}{Re}
\DeclareMathOperator{\im}{Im}
\renewcommand{\mod}[1]{\,{\rm mod}\,#1}
\renewcommand{\bmod}[1]{\,(\mathrm{mod}\,#1)}
\newtheorem{lem}{Lemma}[section]
\newtheorem{prop}[lem]{Proposition}
\newtheorem{thm}[lem]{Theorem}
\newtheorem{defn}[lem]{Definition}
\newtheorem{cor}[lem]{Corollary}
\theoremstyle{definition}
\newtheorem{rem}[lem]{Remark}
\newcommand{\phat}{\widehat{\phi}}
\definecolor{pink}{rgb}{1,.2,.6}
\definecolor{orange}{rgb}{0.7,0.3,0}
\definecolor{blue}{rgb}{.2,.6,.75}
\definecolor{green}{rgb}{.4,.7,.4}
\definecolor{purple}{RGB}{127,0,255}
\begin{document}

\title{One-level densities in families of Gr\"ossencharakters associated to CM elliptic curves}
\author[C. David]{Chantal David}
\address{Chantal David: Department of Mathematics and Statistics, Concordia University, 1455 de Maisonneuve West, Montréal, QC H3G 1M8, Canada}
\email{chantal.david@concordia.ca}

\author[L. Devin]{Lucile Devin}
\address{Lucile Devin: Univ. Littoral C\^ote d'Opale, UR 2597
	LMPA, Laboratoire de Math\'ematiques Pures et Appliqu\'ees Joseph Liouville,
	F-62100 Calais, France
}
\address{CNRS -- Université de Montréal CRM - CNRS}
\email{lucile.devin@univ-littoral.fr}

\author[E. Waxman]{Ezra Waxman}
\address{Ezra Waxman: Department of Mathematics, University of Haifa, 199 Aba Khoushy Ave., Mt. Carmel, Haifa 3498838, Israel}
\address{Unit of Mathematics, Afeka — The Academic College of Engineering in Tel Aviv, Mivtsa Kadesh St 38, Tel Aviv-Yafo 6998812, Israel}
\email{ezrawaxman@gmail.com}

\date{\today}
\numberwithin{equation}{section}
\maketitle
\begin{abstract} We study the low-lying zeros of a family of $L$-functions attached to the CM elliptic curve $E_d \;:\; y^2 = x^3 - dx$, for each odd and square-free integer $d$. Specifically, upon writing the $L$-function of $E_d$ as $L(s-\frac12, \xi_d)$ for the appropriate Gr\"ossencharakter $\xi_d$ of conductor $\mathfrak{f}_d$, we consider the collection $\cF_d$ of $L$-functions attached to $\xi_{d,k}$, $k \geq 1$, where for each integer $k$, $\xi_{d, k}$ denotes the primitive character  inducing  $\xi_d^k$. We observe that $25\%$ of the $L$-functions in $\cF_d$ have negative root number. $\cF_d$ is thus not one of the \emph{essentially homogeneous} families of the Universality Conjecture of Sarnak, Shin and Templier \cite{SST},  with  unitary, symplectic or orthogonal (odd or even) symmetry type.
By computing the one-level density in the family of $L$-functions in $\cF_{d}$ with conductor at most $K^2 \N (\mathfrak{f}_d)$, we find  that $\cF_d$ naturally decomposes into subfamilies: more specifically, a collection of symplectic ($L(s, \xi_{d,k})$ for $k \equiv \alpha \bmod 8$, $\alpha$ even) and orthogonal ($L(s, \xi_{d,k})$ for $k \equiv \alpha \bmod 8$, $\alpha$ odd) subfamilies.  For each such subfamily, we moreover compute explicit lower order terms in decreasing powers of $\log (K^2 \N (\mathfrak{f}_d) )$.
\end{abstract}

\section{Introduction}

Let $d\in \mathbb Z$ be a fixed odd square-free integer, and let $E_d$ denote the complex multiplication (CM) elliptic curve with affine equation $E_d \;:\; y^2 = x^3 - d x$.
The  $L$-function of~$E_d$ is
\begin{align*} 
L(s, E_d) &:= \prod_{p \nmid 2d} \left( 1 - \frac{a_p(E_d)}{p^s} + \frac{1}{p^{2s-1}} \right)^{-1}, \quad \mathrm{Re}(s) > \tfrac{3}{2},
\end{align*}
where for each prime $p \nmid 2d$, one writes $a_p(E_d):= p + 1 - \# E_d(\F_p) $.
Because of complex multiplication by $\mathbb Z[i]$, this $L$-function may be written in terms of the $L$-function of a Gr\"ossencharakter on $\Z[i]$.  More specifically, we have $$L(s, E_d) = L(s-\tfrac12, \xi_d),$$ where  $\xi_d$ is the Gr\"ossencharakter defined in  \eqref{def-xi}. For each fixed $d$, we consider the collection of $L$-functions 
$$
\cF_d := \{ L(s, \xi_{d,k}) \;:\; k \geq 1 \},
$$
where $\xi_{d, k}$ denotes the primitive character inducing the power $\xi_d^k$ of the Gr\"ossencharakter~$\xi_d$. 
For the particular case $d=1$, $\cF_d$ and its subfamilies have been used to study fine scale statistics of Gaussian primes in sectors  \cite{WaxRatio,De2023+,HarmanLewis, RudWax}.

In this paper we study, for fixed odd square-free $d\in\mathbb Z$, the low-lying zeros (i.e. the zeros close to the central point $s=\tfrac12$) of the~$L$-functions in~$\cF_d$. 
The Katz--Sarnak Density Conjecture \cite{KSbook,KS,SST} states that
the distribution of low-lying zeros may be predicted by the symmetry type of the family in question.
Precisely, the conjecture states that the zero distribution corresponds to the distribution of eigenvalues close to $1$ of random matrices in an appropriate classical compact group (unitary, symplectic or orthogonal). 
There is a lengthy history of computing one-level densities, and more generally $n$-level densities, for various $L$-function families including  those of Dirichlet characters \cite{D-G,DPR,HR,Miller-et-al,O-S}, Hecke characters \cite{G-Z,Holm2023,Wa2021}, elliptic curves \cite{HB,M1, Young}, and modular forms \cite{DFS1,ILS,RR}.  We refer the reader to \cite[\S 2]{SST} for an extensive survey of existing results, including results for Artin $L$-functions, $L$-functions of Maass forms, and $L$-functions of automorphic representations.

The symmetry type  is also related to the distribution of the sign of the functional equation.
Let $W(\xi_{d, k})= \pm 1$ denote the root number of $L(s, \xi_{d,k})$ (i.e. the sign of its functional equation).
As demonstrated in Lemma~\ref{lemma-sign}, one has, for any fixed odd square-free $d \in \Z$, that 
\begin{align} \label{average-RN1}
\lim_{K \rightarrow \infty} \frac{\#\{1 \leq k \leq K: W(\xi_{d,k}) = -1\}}{K} = \frac{1}{4}.
\end{align}
Conjecturally, the proportion~\eqref{average-RN1} should equal the average analytic rank of $\cF_{d}$.  As the sign of the functional equation is not constant thorough~$\cF_d$, it is not an ``essentially homogeneous'' family in the language of Sarnak, Shin and Templier \cite{SST}. We further observe that the  orthogonal symmetry type --- which is the average of the  symmetry types $SO(\text{even})$ and $SO(\text{odd})$, and has average rank $\tfrac12$ --- is also excluded.
Upon studying congruence classes of $k$ modulo $8$ (see Theorem~\ref{thm-main}) we show that $\cF_d$ ultimately breaks down into natural subfamilies with different symmetry types. 

In order to state our main result, let us first introduce some notation.
Let $\phi$ be an even Schwartz function such that $
\widehat{\phi}(s) := \int_{-\infty}^\infty \phi(t) e^{-2 \pi i t s} \, \dd t$, the Fourier transform of $\phi$, is compactly supported.
For each fixed odd square-free $d\in \mathbb Z $, we wish to understand the behaviour of the properly normalized low-lying zeros across the subfamilies of $\cF_d$. To this end, we define as in \cite[(54)]{KS},
\begin{equation}\label{def-local_OLD}
\cD(\phi, \xi_{d,k}) := \sum_{L(\frac12+i\gamma, \xi_{d,k}) = 0} \phi \left(  \frac{\gamma \log{\left(k^2 \N(\mathfrak{f}_{d,k})\right)}}{2\pi} \right),
\end{equation}
where $\N(\mathfrak{f}_{d,k})$ is the norm of the conductor of $\xi_{d, k}$, and where the normalisation factor is chosen upon noting that the analytic conductor of $L(s, \xi_{d, k})$ is 
asymptotic to $k^2 \N(\mathfrak{f}_{d,k})$ (for further details concerning this chosen normalisation see the beginning of Section~\ref{section compute D_k} below).  

As the root number $W(\xi_{d,k})$ depends on the congruence class of $k$ modulo $8$ (see Lemma~\ref{lemma-sign}), it is natural to study the subfamilies of $\cF_d$  according to the congruence class of $k$ modulo $8$. For any $\alpha \in \Z/8\Z$,   
we thus define $$\cF_d^{\alpha} := \{ L(s, \xi_{d,k})\;:\; k \geq 1, \; k \equiv \alpha \bmod 8\}.$$  The one-level density of the family $\cF_d^{\alpha}$ is then given for any $K \in \mathbb{N}$, by
\begin{align}\label{One_level_K}
\cD(K; \phi, \cF_d^{\alpha}) &:= \frac{8}{K} \sum_{\substack{1 \leq k \leq K\\k \equiv \alpha \bmod 8}} \cD(\phi, \xi_{d,k}).
\end{align} 
 Our main theorem is as follows.
\begin{thm} \label{thm-main} Let $d$ be an odd square-free integer, $\phi$ be an even Schwartz function with $\mathrm{supp}(\widehat{\phi}) \subset (-1, 1)$ and fix $J \in \mathbb{N}$ and $\alpha \in \{ 1, 2, \dots, 8 \}$.  
Let $M_{d, \alpha} := \N( \ff_{d,\alpha})^\frac12$, where $\ff_{d,\alpha}$ is the conductor of $\xi_{d, \alpha}$ given by~\eqref{conductor}.
When $K \rightarrow \infty$ and $\alpha$ is even,
\[\cD(K; \phi, \cF_d^{\alpha})= \widehat{\phi}(0)-
\frac{1}{2} \int_{\R}\widehat{\phi}(u) \dd u
+\sum_{m=1}^{J}\frac{C_{m}(d,\alpha, \phi)}{(\log(KM_{d, \alpha}))^{m}}+ O_{J}\left(\frac{1 + \log\lvert d\rvert}{(\log(K M_{d, \alpha}))^{J+1}}\right),\]
and when $K \rightarrow \infty$  and $\alpha$ is odd,
\begin{align*}
 \cD(K; \phi, \cF_d^{\alpha}) 
&= \widehat{\phi}(0)+
\frac{1}{2} \int_{\R}\widehat{\phi}(u) \dd u
+\sum_{m=1}^{J}\frac{C_{m}(d,\alpha, \phi)}{(\log(K M_{d,\alpha}))^{m}}
+ O_{J}\left(\frac{1+\log|d|}{(\log(K M_{d,\alpha}))^{J+1}}\right),
\end{align*}
where the $C_{m}(d,\alpha, \phi)$ are given by \eqref{define-Cm}.
\end{thm}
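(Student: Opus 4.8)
The plan is to start from the explicit formula for $L(s,\xi_{d,k})$, which converts $\cD(\phi,\xi_{d,k})$ into a sum over prime ideals $\mathfrak{p}$ of $\Z[i]$ weighted by $\widehat{\phi}$ evaluated at $\log\N(\mathfrak{p})$ scaled by the analytic conductor, plus contributions from the gamma factors and from the pole/central value data encoded in the root number. Concretely, $\cD(\phi,\xi_{d,k}) = \widehat{\phi}(0)\cdot(\text{main}) - 2\sum_{\mathfrak{p},\nu}\frac{\Lambda(\mathfrak{p}^\nu)}{\N(\mathfrak{p}^\nu)^{1/2}}\,\frac{\xi_{d,k}(\mathfrak{p}^\nu)}{\log(k^2\N(\ff_{d,k}))}\,\widehat{\phi}\!\left(\frac{\log\N(\mathfrak{p}^\nu)}{\log(k^2\N(\ff_{d,k}))}\right) + (\text{archimedean terms})$, where $\xi_{d,k}(\mathfrak{p}^\nu)$ is essentially $\xi_d(\mathfrak{p})^{\nu k}$ up to the primitivization. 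Since $\xi_d$ is a Gr\"ossencharakter with an angular (frequency) component, the value $\xi_d(\mathfrak{p})^k$ is of the shape $(\text{root of unity})\cdot e^{i k \theta_\mathfrak{p}}$ for an angle $\theta_\mathfrak{p}$ attached to $\mathfrak{p}$; averaging over $k\equiv\alpha\bmod 8$ in a window $[1,K]$ then reduces, for each fixed prime power $\mathfrak{p}^\nu$, to a geometric/exponential sum in $k$.

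The key dichotomy is the degenerate versus non-degenerate prime powers. If $\mathfrak{p}^\nu$ is such that $\xi_d(\mathfrak{p})^{\nu}$ has trivial angular part — equivalently $\mathfrak{p}^\nu$ is (up to units) a rational integer, i.e. $\nu$ is even for a split prime or $\nu$ arbitrary for an inert prime — then $\xi_d(\mathfrak{p}^\nu)^k$ is a fixed root of unity independent of $\theta$, and averaging the root-of-unity factor over $k\equiv\alpha\bmod 8$ produces a nonzero constant; these are the terms that survive and build up the secondary main term $\pm\frac12\int_\R\widehat{\phi}$. The sign is positive or negative precisely according to whether $\xi_d(\mathfrak{p}^\nu)^\alpha = +1$ or involves the quadratic twist that flips with the parity of $\alpha$ — this is where the $\alpha$ even (symplectic, sign $-\tfrac12\int\widehat\phi$... resp. $+\tfrac12$ for odd) split emerges, and it should match the root-number computation of Lemma~\ref{lemma-sign}. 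For the non-degenerate $\mathfrak{p}^\nu$ (genuinely oscillating angular part), the average over $k$ of $e^{ik\theta_\mathfrak{p}}$ is $O(1/(K\|\theta_\mathfrak{p}\|))$ by summing the geometric series, and one must show the resulting double sum over $\mathfrak{p}^\nu$ and the error is negligible. The support condition $\mathrm{supp}(\widehat\phi)\subset(-1,1)$ ensures only prime powers with $\N(\mathfrak{p}^\nu) < (k^2\N(\ff_{d,k}))^{1-\delta} \approx (KM_{d,\alpha})^{2-\delta}$ contribute, which keeps every ideal sum in a range where it can be estimated by the prime ideal theorem / a Mellin-transform contour argument with power saving, uniformly in the conductor (here the $1+\log|d|$ in the error term enters, since $\N(\ff_{d,k})$ depends polynomially on $d$).

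To extract the lower-order terms $C_m(d,\alpha,\phi)/(\log(KM_{d,\alpha}))^m$, I would, after isolating the surviving degenerate contribution as a Dirichlet series over rational primes $p\nmid 2d$ (an Euler product closely related to $\zeta(s)$, $L(s,\chi_{-4})$ and the quadratic character attached to $d$), express $\cD(K;\phi,\cF_d^\alpha)$ via a Mellin transform $\widehat\phi(u) = \int \cdots$, shift the contour, and read off the Laurent expansion at the relevant points; collecting the poles gives a finite asymptotic expansion in inverse powers of $\log(k^2\N(\ff_{d,k}))$, and then replacing $\log(k^2\N(\ff_{d,k}))$ by $\log(KM_{d,\alpha})$ (valid since $\N(\ff_{d,k})$ is eventually constant in $k$ along $k\equiv\alpha\bmod 8$, equal to $\N(\ff_{d,\alpha})$, and $\sum_{k\le K}\log(k^2\cdots)=K\log(K^2\cdots)+O(K)$) reorganizes the expansion into the stated form with the prescribed remainder $O_J((1+\log|d|)/(\log(KM_{d,\alpha}))^{J+1})$. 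The main obstacle is the uniformity: controlling the oscillatory (non-degenerate) prime-power sum and the contour-shift error terms \emph{simultaneously} in $K$ and in $d$, so that the $d$-dependence is no worse than the linear-in-$\log|d|$ factor claimed; this requires care in the prime ideal theorem estimates (a Siegel–Walfisz-type input, or an unconditional bound exploiting that the conductor $\ff_{d,k}$ has controlled size), and in checking that the average of $e^{ik\theta_\mathfrak{p}}$ over the arithmetic progression $k\equiv\alpha\bmod 8$ does not accidentally resonate for small $\mathfrak{p}$.
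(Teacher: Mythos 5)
Your overall skeleton (explicit formula, separation of oscillatory versus non-oscillatory prime powers, averaging over $k\equiv\alpha\bmod 8$ by geometric sums, PNT plus an expansion to extract lower-order constants) is the same as the paper's, but there are two genuine problems. First, your classification of the ``degenerate'' prime powers is wrong: for a split prime $p\equiv 1\bmod 4$ the angle $\theta_{d,p}$ is never a rational multiple of $\pi$ (Niven's theorem, as used in Lemma~\ref{lemma-split-primes}), so $\xi_d(\fp)^{\nu}$ has nontrivial angular part for \emph{every} $\nu\geq 1$; the ideal $\fp^{\nu}$ is not a rational integer times a unit for any $\nu$. The only non-oscillatory terms are the inert primes $p\equiv 3\bmod 4$ (where $\xi_{d,k}((p))=(-1)^{k}$) and the ramified prime. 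Indeed, in the paper the secondary term $\frac{(-1)^{k+1}}{2}\int\widehat\phi$ is produced for each \emph{fixed} $k$ by applying the prime number theorem in arithmetic progressions to the inert-prime sum (Lemma~\ref{inert_computation}); no averaging over $k$ is needed for it, and its sign is governed by the parity of $k$, not by the root number of Lemma~\ref{lemma-sign} (the root number only distinguishes $SO(\text{even})$ from $SO(\text{odd})$ within odd $\alpha$). If you carried out your plan literally, the split primes with even $\nu$ that you count as degenerate would contribute sums of the shape $\sum_p \frac{\log p}{p}\widehat\phi(\cdot)$, which are of the same order as $\frac12\int\widehat\phi$, so the error is not harmless: it would corrupt the secondary main term unless you notice that these terms in fact still oscillate in $k$.

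Second, and more seriously, the step ``the average over $k$ of $e^{ik\theta_{\fp}}$ is $O(1/(K\lVert\theta_{\fp}\rVert))$ \dots and one must show the resulting double sum is negligible'' is exactly where the real difficulty of the theorem lies, and your proposal supplies no mechanism for it in the range $\mathrm{supp}(\widehat\phi)\subset(-1,1)$. After the Dirichlet-kernel bound one is left with $\frac1K\sum_{p^{n}\leq (KM_{d,\alpha})^{2\nu}}\frac{\log p}{p^{n/2}\,\lVert 8n\theta_{d,p}\rVert_{2\pi}}$, and the only pointwise lower bound available is $\lVert 8n\theta_{d,p}\rVert_{2\pi}\gg p^{-n/2}$ (coming from $z_{d,p}^{n}\in\Z[i]$ lying off the lines $x=0$, $y=0$, $x=\pm y$); this gives $O(d^{2\nu}K^{2\nu-1})$, which is $o(1)$ only for $\nu<\tfrac12$. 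No prime ideal theorem, Mellin contour shift, or Siegel--Walfisz input repairs this, because the obstruction is a small-denominator (resonance) problem for prime powers near the top of the range, not an equidistribution problem. The paper's extension to $\nu<1$ requires the additional idea of Section~\ref{extended-range}: replace the worst-case bound on $1/\lVert 8n\theta_{d,p}\rVert_{2\pi}$ by an average over the pairwise disjoint neighborhoods $S_{p^{n}}$ of the lattice points $z_{d,p}^{n}$ (Lemma~\ref{eq:area_approx}), so that the sum over primes is dominated by an area integral over the region $\lVert 8\theta\rVert_{2\pi}\geq \tfrac1{2r}$, yielding $O(d^{\nu}K^{\nu-1}(\log dK)^{3})$ in Proposition~\ref{lemma-split-primes supp1}. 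Without this (or an equivalent) argument, your proposal proves the theorem only for $\mathrm{supp}(\widehat\phi)\subset(-\tfrac12,\tfrac12)$. The remaining ingredients you sketch (Taylor/Mellin expansion of the non-oscillatory sums for the constants $C_m$, Euler--Maclaurin for the sum over $k$, and the $1+\log|d|$ uniformity from the primes dividing $d$) are consistent with the paper's treatment and would go through.
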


\begin{rem}
Since $\mathrm{supp}(\widehat{\phi}) \subset (-1, 1)$, we have
\begin{equation} \label{maybe} \pm \frac12  \int_{\R}\widehat{\phi}(u) \dd u =  \pm \frac12 \int_{-1}^1 \widehat{\phi}(u) \dd u,
\end{equation}
which is the expected term from Katz--Sarnak heuristics (see \eqref{W_hat_options} and \eqref{def eta}). Note that the equality in~\eqref{maybe} no longer holds when $\widehat{\phi}$ is supported beyond $[-1,1]$. This explains why $\mathrm{supp}(\widehat{\phi}) \subset (-1, 1)$ is a difficult barrier to break: 
one needs to identify an additional term of the exact same size in order to precisely cancel this excess contribution.
\end{rem}

In \cite{Wa2021}, the third author computed the one-level density of the subfamily $k\equiv 0 \bmod 4$ for the particular case $d=1$, for test functions $\phi$ such that $\widehat{\phi}\subset (-1,1)$, up to an error term of size $O(1/(\log K)^{2})$. In such a case, the root number is identically 1 for each element in the family, and the one-level density has symplectic symmetry.  Our work generalizes this in three distinct directions.  Specifically, we allow $d \in \mathbb{Z}$ to be any odd and square-free integer; we consider $k \equiv \alpha\bmod 8$ for any $\alpha \in \Z/8\Z$; and we explicitly compute all lower order terms up to an arbitrary negative power of $\log{(K M_{d,\alpha})}$. 
 In contrast to the main term, which follows the Katz--Sarnak prediction, the lower-order terms have no universal behaviour, as they contain features which depend on the particular family in question. We refer to 
\cite{DFS1, FPS1, FPS2,RicottaRoyer LowerOrder} for comparisons.

An additional interesting feature of this work is a very explicit description of the lower-order contributions in terms of generalized Euler constants. 
Such a description enables a concrete understanding of the speed and direction of convergence to the conjectured distribution (see Section~\ref{alternative_lower_constants} and Appendix~\ref{appendix computation} and in particular Remark~\ref{Rem Appendix}).

Note that Theorem~\ref{thm-main} determines the symmetry type of the families $\cF_d^{\alpha}$. Indeed, by the Katz--Sarnak Density Conjecture and the Sarnak--Shin--Templier Universality Conjecture \cite[Conj. 2]{SST}, we expect that for an \emph{essentially homogeneous} family $\cF$, there exists some compact group $G \in \{U,Sp,SO(\text{even}),SO(\text{odd})\}$ (depending on $\cF$) such that
\[\lim_{K \rightarrow \infty}\cD(K; \phi, \cF) = \int_{\R}W_G(x)\phi(x)\dd x = \int_{\R}\widehat{W}_G(t)\widehat{\phi}(t)\dd t,\]
where
\begin{align} \label{W_options}
W_G(x) := \begin{cases} 1 & \mbox{if $G=U$} \\ 1 - \frac{\sin(2 \pi x)}{2 \pi x} & \mbox{if $G=Sp$} \\
 1 + \frac{\sin(2 \pi x)}{2 \pi x} & \mbox{if $G=SO(\text{even})$} \\
  1 + \delta_0(x) - \frac{\sin(2 \pi x)}{2 \pi x} & \mbox{if $G=SO(\text{odd})$},
\end{cases}
\end{align}
and $\delta_0$ is the Dirac function.
The Fourier transform of each such density is then given by
\begin{align}\label{W_hat_options}
\widehat{W}_G(t) = \begin{cases} \delta_0(t) & \mbox{if $G=U$} \\ \delta_0(t)  - \frac12  \eta(t) & \mbox{if $G=Sp$} \\
\delta_0(t)  + \frac12  \eta(t)  & \mbox{if $G=SO(\text{even})$} \\
\delta_0(t)  +1- \frac12  \eta(t) & \mbox{if $G=SO(\text{odd})$} 
 \end{cases}
\end{align}
where
\begin{equation}\label{def eta}
\eta(t) := \begin{cases} 1 & \text{ if } |t|< 1, \\ \frac12 & \text{ if } |t|=1, \\ 0 & \text{ if }|t|> 1.\end{cases}	
\end{equation}

By Theorem \ref{thm-main} and \eqref{W_hat_options}, $\cF_d^\alpha$ has symplectic symmetry when $\alpha$ is even, and an orthogonal symmetry type when $\alpha$ is odd. 
In the latter case, the two orthogonal symmetry types cannot be distinguished when $\mathrm{supp}(\widehat{\phi}) \subset (-1, 1)$.  
By differentiating the functional equation (see~\eqref{FE}), we moreover observe that  $W(\xi_{d,k}) = -1$ if and only if $\mbox{ord}_{s=\frac12} L(s, \xi_{d,k})$ is odd.
Thus, upon defining
\begin{equation}\label{def S+-}
S_{\pm}(d):= \{ \alpha \in (\Z/8\Z)^\times \;:\; 
W(\xi_{d,k}) = \pm 1 \text{ when } k \equiv \alpha \bmod 8 \},	
\end{equation}
we expect (by heuristically taking $\phi$ to be the indicator function of the set $\lbrace 0 \rbrace$) that
$G=SO(\text{even})$ when $\alpha \in S_{+}(d)$ and $G=SO(\text{odd})$ when $\alpha \in S_{-}(d)$.  The sets $S_{\pm}(d)$ are explicitly computed in~\eqref{set s- computed}.

Note moreover that when $k$ is odd, it follows from \cite[Thm. 12.5]{Iwaniec} that $L(s,\xi_{d,k}) = L(s,f)$, where $f$ is a cusp form of level $4\N(\ff_{d})$, weight $k+1$, and nebentypus equal to the Dirichlet character modulo $4\N(\ff_{d})$ given explicitly by $n \mapsto \legendre{-4}{n} \xi_{d, k}((n))$, where $\legendre{-4}{\cdot}$ is the Kronecker symbol. The nebentypus is then the trivial character modulo $4\N(\ff_{d})$ (see \eqref{conductor_2} and \eqref{chi_at_integers}).  The family $\cF^{\alpha}_{d}$ may thus be viewed as a thin subfamily of $L$-functions associated to cusp forms of fixed level, trivial nebentypus, and varying even weight.  This larger family has an orthogonal symmetry.  This was proven in \cite[Thm.~1.3]{ILS} by computing the corresponding one-level density for $\widehat{\phi}$  supported  in~$(-2,2)$.

As an application of Theorem \ref{thm-main}, we obtain a proportion for non-vanishing at the central point in each family $\cF_d^\alpha$.  This partially answers a question in \cite{De2023+}, posed in the cases $d =\pm1$. The proportion depends upon the symmetry type of the family.

\begin{cor} \label{non-vanishing} Let $d$ be an odd square-free integer and assume the Riemann Hypothesis for the functions in $\mathcal{F}_d$. \\
When $\alpha \in \Z/8\Z$ is even, the proportion of non-vanishing in $\cF_d^{\alpha}$ is at least
\begin{align*}
	\liminf_{K \rightarrow \infty}  \frac{8}{K} \; \# \left\{ 1 \leq k \leq K, \, k \equiv \alpha \bmod 8 \;:\; L( \tfrac12, \xi_{d,k}) \neq 0 \right\} \geq
 75 \%.
\end{align*}
In the case $\alpha$ is odd and $\alpha \in S_{+}(d)$, we have
\begin{align*}
	\liminf_{K \rightarrow \infty}  \frac{8}{K} \; \# \left\{ 1 \leq k \leq K, \, k \equiv \alpha \bmod 8 \;:\; L( \tfrac12, \xi_{d,k}) \neq 0 \right\} \geq
 25 \%. 
\end{align*}
Finally, if $\alpha$ is odd and $\alpha \in S_{-}(d)$, each $L(s, \xi_{d, k})$ vanishes at $s = \tfrac12$, and we have 
\begin{align*}
	\liminf_{K \rightarrow \infty}  \frac{8}{K} \; \# \big\{ 1 \leq k \leq K, \, k \equiv \alpha \bmod 8 \;:\; \mathrm{ord}_{s=\tfrac12} L(s, \xi_{d,k}) = 1\big\} \geq 75 \%.
\end{align*}

\end{cor}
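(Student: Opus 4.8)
The plan is to run the classical deduction (as in \cite{ILS}) that turns an upper bound on the one-level density into a lower bound on non-vanishing, and to sharpen it using the exact root number in each residue class. Fix an even Schwartz $\phi\ge 0$ with $\mathrm{supp}(\widehat\phi)\subset(-1,1)$. Since every summand in \eqref{def-local_OLD} is non-negative and the normalisation sends the central point to the argument $0$, one has for each $k$
\[
\cD(\phi,\xi_{d,k})\ \ge\ \phi(0)\,\mathrm{ord}_{s=\frac12}L(s,\xi_{d,k})
\]
(possible zeros off the critical line, should GRH fail for these Hecke $L$-functions, being dealt with in the usual way). Averaging over $1\le k\le K$ with $k\equiv\alpha\bmod 8$ and letting $K\to\infty$, Theorem~\ref{thm-main} gives
\[
\frac8K\sum_{\substack{1\le k\le K\\ k\equiv\alpha\bmod 8}}\cD(\phi,\xi_{d,k})\ \longrightarrow\ \widehat\phi(0)\mp\tfrac12\int_{\R}\widehat\phi(u)\,\dd u,
\]
with the minus sign when $\alpha$ is even and the plus sign when $\alpha$ is odd.

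Next I would feed in the parity of the central order, which is governed by the sign of the functional equation: by Lemma~\ref{lemma-sign} and the remark following Theorem~\ref{thm-main}, $W(\xi_{d,k})=-1$ exactly when $\mathrm{ord}_{s=\frac12}L(s,\xi_{d,k})$ is odd. For $\alpha$ even one has $W(\xi_{d,k})=+1$ for all such $k$ by Lemma~\ref{lemma-sign}, and for $\alpha$ odd with $\alpha\in S_+(d)$ one has $W(\xi_{d,k})=+1$ by the definition \eqref{def S+-} of $S_+(d)$; in both cases the central order is even, hence $\ge 2$ whenever $L(\tfrac12,\xi_{d,k})=0$, so $\cD(\phi,\xi_{d,k})\ge 2\phi(0)$ at every vanishing $k$ and $\ge 0$ elsewhere. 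For $\alpha\in S_-(d)$ one has $W(\xi_{d,k})=-1$, so the central order is odd: it is $\ge 1$ for every $k\equiv\alpha\bmod 8$ — which already establishes the last assertion of the corollary, that every $L(s,\xi_{d,k})$ vanishes at $s=\tfrac12$ — and it is $\ge 3$ unless it equals $1$, whence $\cD(\phi,\xi_{d,k})-\phi(0)\ge 2\phi(0)$ at every $k$ of central order $\ge 3$ and $\ge 0$ elsewhere.

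Finally I would specialise $\phi$ to (a rescaling of) the Fej\'er kernel $\phi(x)=\bigl(\tfrac{\sin\pi x}{\pi x}\bigr)^2$, for which $\phi\ge 0$, $\phi(0)=\widehat\phi(0)=1$ and $\int_{\R}\widehat\phi=1$; to respect the open-support hypothesis one applies Theorem~\ref{thm-main} to $\phi(\cdot/\sigma)$ with $\sigma>1$ and then lets $\sigma\to 1^+$. Combining the displayed inequalities with the limit and dividing by $2\phi(0)$: for $\alpha$ even, $\limsup_K\frac8K\#\{k\le K:\ k\equiv\alpha,\ L(\tfrac12,\xi_{d,k})=0\}\le\tfrac12\bigl(\widehat\phi(0)-\tfrac12\int\widehat\phi\bigr)=\tfrac14$, so the non-vanishing proportion is $\ge\tfrac34$; for $\alpha\in S_+(d)$ the vanishing proportion is $\le\tfrac12\bigl(\widehat\phi(0)+\tfrac12\int\widehat\phi\bigr)=\tfrac34$, so non-vanishing is $\ge\tfrac14$; for $\alpha\in S_-(d)$ the proportion of $k$ with $\mathrm{ord}_{s=\frac12}L(s,\xi_{d,k})\ge 3$ is $\le\tfrac12\bigl(\widehat\phi(0)+\tfrac12\int\widehat\phi-1\bigr)=\tfrac14$, so the proportion with central order exactly $1$ is $\ge\tfrac34$.

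The only real work is bookkeeping: extracting $W(\xi_{d,k})$ in each residue class from Lemma~\ref{lemma-sign}, and being mindful that Theorem~\ref{thm-main} is stated for $\mathrm{supp}(\widehat\phi)$ strictly inside $(-1,1)$ (handled by the $\sigma\to1^+$ limit). Conceptually, the point worth stressing is that within this support range the one-level density cannot by itself separate the three orthogonal symmetry types, so it is the \emph{exact} value of the root number that upgrades the generic orthogonal main term of Theorem~\ref{thm-main} to the sharper $SO(\mathrm{odd})$-type input used for $\alpha\in S_-(d)$; this is why the odd-$\alpha$ cases split into two different conclusions. One can also remark that the constants $\tfrac34$ and $\tfrac14$ are optimal for this method, since among non-negative $\phi$ with $\widehat\phi$ supported in $[-1,1]$ the Fej\'er kernel maximises $\phi(0)/\widehat\phi(0)$ (a Cauchy--Schwarz computation after writing $\phi=|\widehat h|^2$ with $\mathrm{supp}(h)\subset[-\tfrac12,\tfrac12]$), so no admissible test function improves them.
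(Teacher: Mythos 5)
Your proposal is correct and follows essentially the same route as the paper: the positivity of the Fej\'er kernel $\phi_\nu(x)=\bigl(\tfrac{\sin\pi\nu x}{\pi\nu x}\bigr)^2$, the parity of $\mathrm{ord}_{s=1/2}L(s,\xi_{d,k})$ dictated by the root number in each class mod $8$, and the main term of Theorem~\ref{thm-main} with $\nu\to1^-$ (your $\sigma\to1^+$ rescaling), yielding the same bounds $75\%$, $25\%$, $75\%$; even your closing optimality remark matches the paper's. The only cosmetic difference is that the paper notes one should approximate $\phi_\nu$ (which is not Schwartz) by Schwartz functions before invoking Theorem~\ref{thm-main}.
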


The structure of this paper is as follows. In Section \ref{background} we define the {Gr\"ossencharakters}~$\xi_{d, k}$, and explicitly compute the root numbers $W(\xi_{d, k})$ for all odd square-free $d \in 
\Z$ and $k \geq 1$.  This leads to the observation (equation \eqref{average-RN}) that $W(\xi_{d,k})=-1$ for $25\%$ of the $k \geq 1$, and depends only on  $k \bmod 8$ (for any fixed value of $d$). 
In Section~\ref{section compute D_k}, we then employ the explicit formula to compute the contribution of the $\Gamma$-factors, of the ramified primes, and of the inert primes towards the one-level density computation, for each $\xi_{d,k}$.  This yields the symmetry type of each family $\cF_d^\alpha$ (see Proposition \ref{local_OLD}), provided one can show that the averaged contribution of the split primes (i.e. of $U_{\rm split}(\phi, d, k)$ over $1 \leq k \leq K$ with $k\equiv \alpha \bmod 8$) is small.   In Section~\ref{alternative_lower_constants} we then offer an alternative expression for $c_{j,\textnormal{inert}}(k)$, the contributions of the inert primes to $C_{m}(d,\alpha, \phi)$, helpful in order to compute such constants explicitly in Appendix~\ref{appendix computation}.  Next, in Section~\ref{section_small_support} we provide a simple argument showing that 
Theorem~\ref{thm-main} holds for $\text{supp}(\widehat\phi) \subset (-\frac12, \frac12)$, and in Section \ref{extended-range} we  extend this result to the range $\text{supp}(\widehat\phi) \subset(-1,1)$. Finally, we prove Corollary \ref{non-vanishing} in Section~\ref{section-non-vanishing}.

\section*{Acknowledgements} 
We thank the referee for a very careful read of the manuscript as well as for meticulous comments which have helped to greatly improve the exposition.  We also thank Anders S\"odergren and Alessandro Languasco for helpful discussions. This paper was written during visits of the authors at Concordia University, Universit\'e du Littoral C\^ote d'Opale and Institut Henri-Poincar\'e (Research in Paris program). We thank those institutions for financial support and hospitality.
CD was supported by the Natural Sciences and Engineering Research Council of Canada [DG-155635-2019] and by the Fonds de recherche du Qu\'ebec - Nature et technologies [Projet de recherche en \'equipe 300951]. LD was supported by the PEPS JCJC 2023 program of the INSMI (CNRS). EW was supported by a Zuckerman Post Doctoral Fellowship and by the Israel Science Foundation (Grant No. 1881/20).

\section{Background}\label{background}
\subsection{Quartic Residue Symbol}
We refer to \cite[Ch. 9]{IreRos} for the material of this section.  To begin, we define the \textit{norm} of a Gaussian integer $\alpha \in \Z[i]$ to be $\N(\alpha):=\alpha \overline{\alpha}$.  To any Gaussian prime $\pi \in
\Z[i]$ such that $\pi\nmid 2$, we define the \textit{quartic residue symbol} modulo $\pi$ to be the quartic character
$\chi_{(\pi)}: \left( \Z[i]/(\pi) \right)^\times \longrightarrow \{ \pm 1, \pm i \}
$
such that
\begin{align}
 \label{def-quartic}
\chi_{(\pi)}(\alpha) := \left( \frac{\alpha}{\pi} \right)_4 \equiv \alpha^{(\N(\pi)-1)/4} \bmod \pi,
\end{align}
and extended to $\Z[i]$ by $\chi_{(\pi)}(\alpha)  =0$ when $(\alpha,\pi) \neq 1$.
Consider a non-unit $\beta \in \Z[i]$ with prime decomposition given by $\beta= \pi_1^{e_1} \dots \pi_s^{e_s}$.  When $(\beta, 2)=1$, we may extend by multiplicativity to then define, for any $\alpha \in \Z[i]$,
$$
\chi_{(\beta)}(\alpha) := \left( \frac{\alpha}{\beta} \right)_4 = \prod_{i=1}^s \left( \frac{\alpha}{\pi_i} \right)_4^{e_i}.
$$
In particular, we note \cite[Prop. 9.8.5]{IreRos} that for odd $d \in \mathbb{N}_{\geq 3}$ and $n \in \Z$ such that $(n,d) = 1$, we have
 \begin{equation}\label{chi_at_integers}
\chi_{(d)}(n) =1.
\end{equation}

Recall that the \textit{conductor} of a character $\chi: \left(\Z[i]/\mathfrak{m}\right)^{\times}\longrightarrow S^{1}$ refers to the smallest divisor~$\mathfrak{f}|\mathfrak{m}$ such that $\chi$ factors through 
 $(\Z[i]/\mathfrak{f})^{\times}$ via the projection $(\Z[i]/\fm)^{\times} \twoheadrightarrow (\Z[i]/\mathfrak{f})^{\times}.$  If the conductor of $\chi$ is $\mathfrak{m}$, then $\chi$ is referred to as a \textit{primitive} character mod $\mathfrak{m}$.  In particular, we note that if $\beta \in \Z[i]$ as above is square-free, then 
$\chi_{(\beta)}$ is a primitive 
quartic character with conductor $(\beta)$.

A non-unit Gaussian integer $\alpha \in \Z[i]$ is said to be \textit{primary} if $\alpha \equiv 1 \bmod {(2+2i)}$.  As in \cite[Lem. 7, Sect. 9.7]{IreRos}, we find that any proper ideal $(\alpha) \in \Z[i]$ coprime to $(2)$ has precisely one primary generator $\boldsymbol{\alpha} \in \{\pm \alpha, \pm i \alpha\}$. In what follows, we denote the primary generator $\boldsymbol{\alpha}$ of an ideal $(\alpha)$ by a bold letter.  We cite from \cite[Ch. 9, Thm. 2]{IreRos} the following reciprocity law:

\begin{prop}[Law of Quartic Reciprocity] \label{reciprocity} Let $\boldsymbol{\alpha}, \boldsymbol{\beta} \in \Z[i]$ be primary such that $(\boldsymbol{\alpha},\boldsymbol{\beta})=1$. Then
\begin{align}
\left( \frac{\boldsymbol{\alpha}}{\boldsymbol{\beta}} \right)_4= \left( \frac{\boldsymbol{\beta}}{\boldsymbol{\alpha}} \right)_4 (-1)^{\frac{\N(\boldsymbol{\alpha})-1}{4} \; \frac{\N(\boldsymbol{\beta})-1}{4}}.\end{align}
\end{prop}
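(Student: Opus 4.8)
Since this is a classical result, recorded in \cite[Ch.~9, Thm.~2]{IreRos}, I will only sketch the shape of the standard proof, which runs through the arithmetic of Gauss and Jacobi sums over $\Z[i]$. First I would reduce to the case of two primary \emph{primes}. Both sides of the asserted identity are multiplicative in $\boldsymbol{\alpha}$ and in $\boldsymbol{\beta}$: for the quartic symbols this is how they are defined, and for the exponent on the right one checks $\tfrac{\N(\gamma\gamma')-1}{4}\equiv\tfrac{\N(\gamma)-1}{4}+\tfrac{\N(\gamma')-1}{4}\pmod{2}$ whenever $\gamma,\gamma'$ are primary, which follows at once from $\N(\gamma)\equiv\N(\gamma')\equiv1\pmod{4}$ together with the fact that a product of primary elements is primary (so that the unique primary generator of $(\gamma\gamma')$ is the product of those of $(\gamma)$ and $(\gamma')$, and no spurious unit is introduced). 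Hence it suffices to prove the law when $\boldsymbol{\alpha}=\pi$ and $\boldsymbol{\beta}=\lambda$ are primary primes, and one may separate the subcases according to whether each of $\pi,\lambda$ has prime norm (split prime) or is an inert rational prime $q\equiv 3\pmod{4}$.

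The technical core is the same Gauss/Jacobi-sum package used for the cubic law in \cite[Ch.~9]{IreRos}. For a split primary prime $\pi$ with $\N\pi=p$, identify $(\Z[i]/\pi)^{\times}$ with $\F_p^{\times}$, fix a primitive $p$-th root of unity $\zeta$, write $\chi=\chi_{(\pi)}$ for the quartic residue character, and form $g(\chi)=\sum_{t\in\F_p}\chi(t)\zeta^{t}\in\Z[i,\zeta]$ and $J(\chi,\chi)=\sum_{t\in\F_p}\chi(t)\chi(1-t)\in\Z[i]$. The facts I would put in place are $g(\chi)g(\overline{\chi})=\chi(-1)p$, and $g(\chi)^2=J(\chi,\chi)g(\chi^2)$ with $g(\chi^2)^2=\chi^2(-1)p=p$ since $p\equiv 1\pmod{4}$, whence $g(\chi)^4=p\,J(\chi,\chi)^2$; and $|J(\chi,\chi)|^2=p$. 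The crucial arithmetic input --- the quartic analogue of the statement proved for cubic characters in \cite[Ch.~9]{IreRos} --- is the identification of $J(\chi,\chi)$ with a primary generator of $(\pi)$ up to a precisely determined unit of $\Z[i]$; this pins down the exact power of $\pi$, and the rational factor $p$, hidden inside $g(\chi)$.

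With this in hand, the law is extracted by Euler's criterion: $\left(\tfrac{\lambda}{\pi}\right)_4\equiv\lambda^{(\N\pi-1)/4}\pmod{\pi}$, and substituting for $\lambda$ its expression as a unit times $J(\chi_{(\lambda)},\chi_{(\lambda)})$ reduces everything to evaluating a power of that Jacobi sum modulo $\pi$ --- which in turn amounts to reducing the Gauss-sum relations for $\chi_{(\lambda)}$ modulo $\pi$, i.e. applying a Frobenius at $\pi$ to a Gauss sum over $\F_\ell$ and collapsing multinomial coefficients modulo $\ell$ (hence modulo $\pi$). This produces $\left(\tfrac{\lambda}{\pi}\right)_4$ in terms of $\left(\tfrac{\pi}{\lambda}\right)_4$, quartic symbols of the rational factors $p$ and $\ell$, and quartic symbols of the units $\pm1,\pm i$ and of $1+i$; the residual contributions are then cleaned up using ordinary quadratic reciprocity and its supplements together with the supplementary biquadratic laws for $i$ and $1+i$ (which one establishes first, by the same Gauss-sum method), and the bookkeeping collapses to the sign $(-1)^{\frac{\N\pi-1}{4}\frac{\N\lambda-1}{4}}$. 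The cases involving an inert prime $q\equiv 3\pmod{4}$ either run through the identical scheme or are deduced from the split case using $\left(\tfrac{\alpha}{q}\right)_4=\left(\tfrac{\overline{\alpha}}{q}\right)_4$ and rational biquadratic reciprocity. I expect the genuine obstacle to be pinning down the exact unit in $J(\chi,\chi)=(\mathrm{unit})\cdot\pi$ for $\pi$ primary (equivalently, the precise unit in $g(\chi)^4=p\,J(\chi,\chi)^2$ read as an identity of primary generators): a soft argument only determines $J(\chi,\chi)$ up to a fourth root of unity, and it is exactly this finer normalization --- in tandem with the supplementary laws --- that upgrades a ``reciprocity up to a fourth root of unity'' into the clean sign $(-1)^{\frac{\N\pi-1}{4}\frac{\N\lambda-1}{4}}$.
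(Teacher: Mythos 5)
The paper does not prove this proposition at all --- it is quoted directly from \cite[Ch.~9, Thm.~2]{IreRos} --- so there is no internal argument to compare against. Your sketch faithfully outlines exactly the classical Gauss/Jacobi-sum proof found in that reference: the multiplicativity reduction to primary primes is correct (including the congruence $\tfrac{\N(\gamma\gamma')-1}{4}\equiv\tfrac{\N(\gamma)-1}{4}+\tfrac{\N(\gamma')-1}{4} \pmod 2$ and the fact that products of primary elements are primary), and you rightly identify the primary normalization of $J(\chi,\chi)$ together with the supplementary laws for $i$ and $1+i$ as the steps carrying the real content, which your sketch, as announced, leaves to the cited source.
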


We will also use quadratic and quartic characters modulo even ideals. More precisely, we introduce the character $\chi_{(2)}:\left(\Z[i]/(2)\right)^{\times} \longrightarrow \{\pm 1\}$ given by
\begin{align} \label{def-2}
\quad  \chi_{(2)}(\alpha) &:= \begin{cases} 1 & \alpha \equiv 1 \bmod 2 \\ -1 & \alpha \equiv i \bmod 2, \end{cases}
\end{align}
the character $\chi_{(2+2i)}:\left(\Z[i]/(2+2i)\right)^{\times} \longrightarrow \{\pm 1, \pm i\}$ given by
\begin{align}
 \label{def-2+2i}
\chi_{(2+2i)}(\alpha)&:= \begin{cases} 1 & \alpha \equiv 1 \bmod {2+2i} \\
-1 & \alpha \equiv  -1 \bmod {2+2i} \\
-i & \alpha \equiv i \bmod {2+2i} \\
i & \alpha \equiv -i  \bmod {2+2i},
\end{cases}
\end{align}
and the character $\chi_{(4)}:\left(\Z[i]/(4)\right)^{\times} \longrightarrow \{\pm 1, \pm i\}$ given by
\begin{align}\label{def-4}
\chi_{(4)}(\alpha) &:= \begin{cases}
1 & \alpha \equiv 1, -(3+2i) \bmod 4\\
-1 & \alpha \equiv -1, (3+2i) \bmod 4\\
i & \alpha \equiv i, -i(3+2i) \bmod 4\\
-i & \alpha \equiv -i, i(3+2i) \bmod 4.
\end{cases}
\end{align}

Note that  $\chi_{(2)}, \chi_{(2+2i)}$ and $\chi_{(4)}$ are primitive characters on $\Z[i]$, with conductors given by $(2)$, $(2+2i)$, and $(4)$, respectively.  We further remark that for $\alpha \in \Z[i]$, with $(\alpha,2) = 1$, the primary generator of $(\alpha)$ is given by
\begin{equation}\label{boldalpha}
\boldsymbol{\alpha} =  {\chi}_{(2+2i)}(\alpha)  \; \alpha.
\end{equation}
  Finally, by \cite[Thm. 6.9]{Lemmermeyer} (see also \cite[Ch. 9, Ex. $32-33$]{IreRos}), we note that for odd $d \in \mathbb{N}_{\geq 3}$,
 \begin{equation}\label{eisenstein_reciprocity}
\chi_{(d)}(1+i) =
i^{(\boldsymbol{d}-1)/{4}},
\end{equation}
where $\boldsymbol{d}$ denotes the primary generator of $(d)$.

\subsection{$L$-functions of Elliptic Curves}

Much of the following material on elliptic curves may be found in \cite[Ch. 18]{IreRos}.
Let $E$ be an elliptic curve over $\Q$ with conductor~$N_E$. 
For~$p \nmid N_E$, let
$a_{p}(E) := p+1-\# E(\F_{p})$.  By the Hasse bound, $a_{p}(E) \leq 2 \sqrt{p}$.  The \textit{$L$-function} attached to~$E$ is then defined by
\begin{equation}
L(s,E) := \prod_{p}L_p(s,E)^{-1}, \quad \quad \mathrm{Re}(s) > \frac{3}{2},
\end{equation}
where
\begin{align*}
L_p(s,E) &:= \left\{
\begin{array}{l l l}
(1-a_{p}(E)p^{-s}+p^{1-2s}) & \text{ if } p \textnormal{ has good reduction at } p \\
(1-p^{-s}) & \text{ if } p \textnormal{ has split multiplicative reduction at } p \\
(1+p^{-s}) & \text{ if } p \textnormal{ has non-split multiplicative reduction at } p \\
1 & \text{ if } p \textnormal{ has additive reduction at } p.
\end{array} \right.
\end{align*}
In the case $E_d: y^2 = x^3 - dx$, the conductor is $N_{E_d} = 2^{\ell} d^2$, where $\ell = 5$ (resp. $\ell=6$) if $d \equiv 1 \bmod 4$ (resp. $d \equiv 3 \bmod 4$), and 
\begin{align} \label{L-fct}
L(s, E_d) &= \prod_{p \nmid 2d} \left( 1 - \frac{a_p(E_d)}{p^s} + \frac{1}{p^{2s-1}} \right)^{-1}, \quad \mathrm{Re}(s) > \frac{3}{2}.
\end{align} 
The following result will enable us to express $L(s, E_d)$ as the $L$-function of a Gr\"ossencharakter over $\Q(i)$.
\begin{prop}\label{cmcurvesheckethm}
	Let $d$ be an odd square-free integer.
	For $p \equiv 1 \bmod{4}$, $p\nmid d$, write $p \mathbb{Z}[i] = \fp \overline{\fp}$ and let $\boldsymbol{\pi}_{\fp}$ be the primary generator of $\fp$.
	 Then on $\textrm{Re}(s)> 3/2$,  we have  
	\begin{align}  \label{L-fct-E}
		L(s,E_{d})
		& = \prod_{\substack{p \equiv 3 \bmod4\\ p \nmid d}}(1+p^{-2s+1})^{-1}\prod_{\substack{p \equiv 1 \bmod 4 \\(p) = \fp \overline{\fp}  \\ p \nmid d  }}
		\bigg(1-\frac{\overline{\chi_{\fp}(d)}\boldsymbol{\pi}_{\fp}}{p^{s}}\bigg)^{-1}
		\bigg(1-\frac{\chi_{\fp}(d)\overline{\boldsymbol{\pi}}_{\fp}}{p^{s}}\bigg)^{-1}.
	\end{align}
\end{prop}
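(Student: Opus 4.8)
The plan is to identify the CM elliptic curve $E_d$ with a Hecke character on $\Z[i]$ via the theory of CM, and then make the Euler factors explicit using the quartic residue symbol machinery from the previous subsection. First I would recall that for $E: y^2 = x^3 - x$ (the case $d=1$), classical work (e.g. \cite[Ch.~18]{IreRos}) shows $L(s,E_1) = L(s,\psi)$, where $\psi$ is the Hecke Gr\"ossencharakter on $\Z[i]$ characterized, on a prime ideal $\fp=(\boldsymbol{\pi}_\fp)$ with $\boldsymbol{\pi}_\fp$ primary and $p\equiv 1\bmod 4$, by $\psi(\fp) = \boldsymbol{\pi}_\fp$, and $\psi(\fp)=0$ for $p\equiv 3\bmod 4$ (inert) and for ramified primes; concretely $a_p(E_1) = \boldsymbol{\pi}_\fp + \overline{\boldsymbol{\pi}}_\fp = 2\Re(\boldsymbol{\pi}_\fp)$ for $p\equiv 1\bmod 4$, and $a_p(E_1)=0$ for $p\equiv 3\bmod 4$. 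This gives the asserted shape of the Euler product for $d=1$.

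Next I would pass from $E_1$ to $E_d$ by a quadratic twist: $E_d$ is the twist of $E_1$ by the quartic character associated to $d$, and correspondingly $\xi_d$ will be $\psi$ multiplied by an appropriate finite-order character. The key input is that twisting $E_1$ by $d$ multiplies $a_p(E_1)$ by the value of a character depending on whether $d$ is a fourth power mod $p$; more precisely, for $p\equiv 1\bmod 4$ with $p\nmid 2d$ and $p\Z[i]=\fp\overline{\fp}$, one has $a_p(E_d) = \overline{\chi_\fp(d)}\,\boldsymbol{\pi}_\fp + \chi_\fp(d)\,\overline{\boldsymbol{\pi}}_\fp$. This is the heart of the matter and follows from a point-count over $\F_p$: the number of points on $y^2 = x^3 - dx$ can be expressed via Jacobi sums / quartic Gauss sums, and the dependence on $d$ enters through $\chi_{(\pi_\fp)}(d) = \chi_\fp(d)$, the quartic residue symbol of $d$. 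One checks that the conjugate pair $\{\overline{\chi_\fp(d)}\boldsymbol{\pi}_\fp,\ \chi_\fp(d)\overline{\boldsymbol{\pi}}_\fp\}$ is exactly the pair of roots of $T^2 - a_p(E_d)T + p$, which is what appears in the local factor $1 - a_p(E_d)p^{-s} + p^{1-2s}$; factoring this quadratic over $\C$ gives the product of the two linear factors in~\eqref{L-fct-E}. For $p\equiv 3\bmod 4$, $p\nmid d$, the curve has supersingular reduction, $a_p(E_d)=0$, and the local factor is $1 + p^{1-2s} = 1 + p^{-2s+1}$, matching the first product. Finally one notes that $p\mid d$ (and $p=2$) are precisely the primes of additive reduction already excluded in~\eqref{L-fct}, so no Euler factors are lost.

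The one genuine subtlety — the step I expect to be the main obstacle — is getting the twisting factor exactly right: verifying that the character multiplying $\boldsymbol{\pi}_\fp$ is precisely $\overline{\chi_\fp(d)}$ (and not, say, $\chi_\fp(d)$, or a character differing by $\chi_{(2+2i)}$ or by a sign), and that this is consistent with the normalization that $\boldsymbol{\pi}_\fp$ is the \emph{primary} generator. This requires carefully tracking the point-count formula for $\#E_d(\F_p)$: writing $\#E_d(\F_p) = p + 1 + \sum_{x\in\F_p}\legendre{x^3-dx}{p}$ and evaluating the character sum via the quartic Jacobi sum $J(\chi_2,\chi_4)$ attached to $\F_p$, the dependence on $d$ factors out as $\overline{\chi_\fp}(d)\cdot(\text{value at }d=1)$ on one side and its conjugate on the other; the identification $J(\chi_2,\chi_4) = \boldsymbol{\pi}_\fp$ with the primary normalization is the classical fact (Ireland--Rosen \cite[Ch.~18, Thm.~5]{IreRos}) that pins everything down. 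Once the $d=1$ case and this twisting identity are in hand, assembling the Euler product in the stated form is routine, and one should also remark that the product converges absolutely for $\Re(s) > 3/2$ by Hasse's bound $|\boldsymbol{\pi}_\fp| = \sqrt{p}$.
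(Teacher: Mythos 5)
Your proposal is correct and in substance identical to the paper's proof: the entire content is the point count $\#E_{d}(\F_{p})=p+1$ for $p\equiv 3\bmod 4$ and $a_{p}(E_{d})=\overline{\chi_{\fp}(d)}\,\boldsymbol{\pi}_{\fp}+\chi_{\fp}(d)\,\overline{\boldsymbol{\pi}}_{\fp}$ for $p\equiv 1\bmod 4$, which the paper simply quotes from \cite[Ch.~18, Thm.~5]{IreRos} — a statement already valid for general $d$ — so your detour through the case $d=1$ followed by a quartic twist (and the Jacobi-sum identification you flag as the main obstacle) is exactly the content of that cited theorem and need not be redone. The only additional small checks in the paper's proof, which you handle implicitly via symmetry of the roots of $T^{2}-a_{p}T+p$, are that $\overline{\boldsymbol{\pi}}_{\fp}$ is again primary and that $\chi_{\overline{\fp}}(d)=\overline{\chi_{\fp}(d)}$, making the Euler factors independent of the choice of $\fp$ above $p$.
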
 

\begin{proof}
	Let us first remark that the contribution of the primes $p\equiv 1 \bmod 4$ is well defined because  $\overline{\boldsymbol{\pi}}_\fp$ is the primary generator of $\overline{\fp}$.  Indeed, since $\boldsymbol{\pi}_\fp$ is primary, there exists some $a+bi \in \Z[i]$ such that
	\begin{align*}
		\boldsymbol{\pi}_{\fp} = 1+(a+bi)(2+2i),
	\end{align*}
	from which it follows that
	\begin{align*}
		\overline{\boldsymbol{\pi}}_{\fp} &= \overline{1+(a+bi)(2+2i)}= 1+(-b-ai)(2+2i)\equiv 1 \bmod{(2+2i)} .
	\end{align*}
Moreover, since $d\in \mathbb Z$, we deduce from~\eqref{chi_at_integers} that  $\chi_{\overline\p}(d) \chi_{\p}(d) =  \chi_{(p)}(d) =1$, that is $\chi_{\overline\p}(d) = \overline{\chi_{\p}(d)}$.
	
	The proof then follows directly from \cite[Thm. 5 in Ch. 18]{IreRos} which ensures that for $p \nmid 2d$ and $p \equiv 3 \bmod 4$, we have $\# E_{d}(\F_{p}) = p+1$,
	while for $p \nmid 2d$ and $p \equiv 1 \bmod 4$  we have
	\[\# E_{d}(\F_{p}) = p+1 - a_p(E_d) = p + 1 - \overline{\chi_{\fp}(d)}\boldsymbol{{\pi}}_{\fp} - \chi_{\fp}(d)   \overline{\boldsymbol{\pi}}_{\fp}.\]
	The definition~\eqref{L-fct} concludes the proof.
\end{proof}
 We will now describe a Gr\"ossencharakter (or Hecke character) whose $L$-function is~$L(s,E_{d})$.

\subsection{Gr\"ossencharakters}
Fix a non-zero integral ideal $\mathfrak{m}$ of $\Q(i)$, and let $J^{\mathfrak{m}}$ denote the group of fractional ideals in $\Q(i)$ coprime to $\mathfrak{m}$.  As in {\cite[VII.6.1]{Neukirch}}, we say that $\xi : J^\fm \rightarrow S^1$ is a \textit{Gr\"ossencharakter} modulo $\fm$ on $\Q(i)$, if there exists a pair of characters 
\[\xi_{\mathrm{fin}} :(\Z[i]/\fm)^{\times} \longrightarrow S^1, \quad \xi_\infty : \C^\times  \longrightarrow S^1\]
such that 
\begin{equation}\label{Hecke character def}
\xi((\alpha)) = \xi_{\mathrm{fin}} (\alpha) \xi_\infty (\alpha),
\end{equation}
 for every $\alpha \in \Z[i]$ coprime to $\fm$.  $\xi_{\mathrm{fin}}$ is referred to as the \textit{finite} component of $\xi$, while $\xi_{\infty}$ is referred to as the \textit{infinite} component of $\xi$.  The infinite component~$\xi_{\infty}$ may moreover be written in the form
\[\xi_{\infty}(\alpha) = \left(\frac{\alpha}{|\alpha|}\right)^{\ell}|\alpha|^{it},  \quad \text{ with } \ell \in \Z, t \in \R,\]
in which case $\xi$ is said to be of \textit{type} $(\ell,t)$, and we refer to $\ell \in \Z$ as the \textit{frequency} of $\xi$.  The $L$-function attached to the 
Gr\"ossencharakter $\xi$ is 
\[L(s, \xi):=\prod_{\p \textnormal{ prime}}\left(1-\frac{\xi(\p)}{\N(\p)^{s}}\right)^{-1}, \quad \quad \textrm{Re}(s)>1.\]
The \textit{conductor} of $\xi$ refers to the conductor of $\xi_{\mathrm{fin}}$, and similarly $\xi$ is said to be a \textit{primitive} Gr\"ossencharakter when $\xi_{\mathrm{fin}}$ is primitive.

Fix $d \in \Z$ to be odd and square-free.
For any prime ideal $\fp \subset \Z[i]$ with primary generator~$\boldsymbol{\pi}_\fp$, define
\begin{align} \label{def-xi}
\xi_{d}(\fp) := \begin{cases} 
\displaystyle{\frac{\boldsymbol{\pi}_\fp}{|\boldsymbol{\pi}_\fp|}} \cdot \overline{\chi}_{\fp}(d)  & \text{when } (\boldsymbol{\pi}_\fp,2+2i)=1, \\ 
     \\
0 & \text{when } (\boldsymbol{\pi}_\fp, 2+2i) \neq 1,
\end{cases}
\end{align}
and extend by multiplicativity to all ideals $\fa \subset \Z[i]$.  $\xi_{d}$ then defines a Gr\"ossencharakter modulo $(4d)$.  Indeed, for any ideal $(\alpha) \subset \Z[i]$ coprime to $(4d)$, we may write
$
\xi_{d}((\alpha)) = \xi_{d,\text{fin}}(\alpha) \xi_{\infty}(\alpha),
$
where $\xi_{\infty}: \C^\times \rightarrow S^{1}$
and $\xi_{d,\text{fin}}: \left(\Z[i]/(4d)\right)^{\times} \rightarrow S^{1}$
 are given by
 \begin{align}\label{xi infinite and finite}
 	\xi_{\infty}: \alpha \mapsto  \frac{\alpha}{|\alpha|},  \quad \xi_{d,\text{fin}}: \alpha \mapsto \overline{\chi}_{(\alpha)}(d) \; {\chi}_{(2+2i)}(\alpha).
 \end{align}
By \eqref{boldalpha}, this matches $\eqref{def-xi}$ when $(\alpha) \neq (1+i)$ is prime.

As the primary generator of a prime $p \equiv 3 \bmod 4$ is equal to $-p$, we find by \eqref{chi_at_integers} that when $p\nmid d$, we have $\xi_d((p)) = -1$ . Upon comparing Euler factors with \eqref{L-fct-E}, we see that
\begin{equation}\label{y2x3Lfunction}
L(s-\tfrac12, \xi_d) = \prod_{\p \textnormal{ prime}}\left(1-\frac{\xi_{d}(\p)}{\N(\p)^{s-\frac{1}{2}}}\right)^{-1}=L(s,E_{d}), \quad \quad \re(s) > \frac32.
\end{equation}

\subsection{Computing Conductors}

Let us proceed by providing a more user-friendly expression for~$\xi_{d,\text{fin}}$.

\begin{lem}\label{Lem expression xidfin}
	Let $d \in \Z$ be an odd and square-free integer, and $\xi_{d,\textnormal{fin}}$ as in~\eqref{xi infinite and finite}. 
	One has
	\begin{equation}\label{expression xidfin}
		\xi_{d,\textnormal{fin}} = \begin{cases}
				\overline{\chi}_{(d)}  {\chi}_{(2+2i)}  & \textnormal{ if } d\equiv  1 \bmod 8\\
				\overline{\chi}_{(d)}  {\chi}_{(2)}  \overline{\chi}_{(4)} &\textnormal{ if } d\equiv  3 \bmod 8 \\
				\overline{\chi}_{(d)}  {\chi}_{(2+2i)} \chi_{(2)} & \textnormal{ if } d\equiv  5 \bmod 8 \\
				
				\overline{\chi}_{(d)}   \overline{\chi}_{(4)}& \textnormal{ if } d\equiv  7 \bmod 8.
		\end{cases}
	\end{equation}
In particular the conductor of  $\xi_{d}$ is given by
\[
\mathfrak{f}_d=
\begin{cases}
	((2+2i)d) & \textnormal{ if } d \equiv 1 \bmod 4 \\
		(4d) & \textnormal{ if } d \equiv 3 \bmod 4.
\end{cases}
\]
\end{lem}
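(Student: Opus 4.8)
The plan is to compute $\xi_{d,\mathrm{fin}} = \overline{\chi}_{(\alpha)}(d)\,\chi_{(2+2i)}(\alpha)$ (viewed as a character of $\alpha$) by massaging the first factor via quartic reciprocity until the dependence on $\alpha$ is expressed through the "standard" characters $\chi_{(2)}$, $\chi_{(2+2i)}$, $\chi_{(4)}$ modulo powers of $1+i$, times a fixed character $\overline{\chi}_{(d)}$ of $\alpha$ modulo $d$. First I would reduce to the case $d>0$: replacing $d$ by $-d$ multiplies $\chi_{(\alpha)}(d)$ by $\chi_{(\alpha)}(-1)$, and $\chi_{(\alpha)}(-1) = (-1)^{(\N(\alpha)-1)/4}$ depends only on $\alpha$ modulo $2+2i$ (indeed only on whether $\alpha$ is primary), so this only shifts the "even part"; I will keep track of signs of $d$ via the case split mod $8$, which already distinguishes $d\equiv 1,5$ from $d\equiv 3,7$ and handles the quadratic-vs-quartic behaviour at $2$. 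So assume $d>0$ odd squarefree, and write $\boldsymbol{d} = \chi_{(2+2i)}(d)\, d$ for its primary generator by \eqref{boldalpha}.

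The key step is the reciprocity flip. For $\alpha$ coprime to $2d$ with primary generator $\boldsymbol{\alpha} = \chi_{(2+2i)}(\alpha)\,\alpha$, Proposition~\ref{reciprocity} gives
\[
\left(\frac{\boldsymbol{d}}{\boldsymbol{\alpha}}\right)_4 = \left(\frac{\boldsymbol{\alpha}}{\boldsymbol{d}}\right)_4 (-1)^{\frac{\N(\boldsymbol{d})-1}{4}\cdot\frac{\N(\boldsymbol{\alpha})-1}{4}}.
\]
Using multiplicativity of the quartic symbol in both arguments together with \eqref{boldalpha}, I can rewrite $\chi_{(\alpha)}(d) = \left(\frac{d}{\alpha}\right)_4$ in terms of $\left(\frac{\boldsymbol{d}}{\boldsymbol{\alpha}}\right)_4$ up to factors $\left(\frac{\pm 1, \pm i}{\alpha}\right)_4$ and $\left(\frac{d}{\chi_{(2+2i)}(\alpha)}\right)_4$ — i.e. up to characters of $\alpha$ that factor through $\Z[i]/(4)$ (since $\left(\frac{u}{\alpha}\right)_4$ for a unit $u$ is a power of $(-1)^{(\N\alpha-1)/4}$ and $\chi_{(4)}$ controls the quartic residue class of units, cf.\ \eqref{def-4}). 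After the flip, $\left(\frac{\boldsymbol{\alpha}}{\boldsymbol{d}}\right)_4 = \chi_{(d)}(\boldsymbol{\alpha}) = \chi_{(d)}(\chi_{(2+2i)}(\alpha))\,\chi_{(d)}(\alpha)$, and again $\chi_{(d)}$ evaluated at a unit is a $4$th-root-of-unity constant times a character through $\Z[i]/(4)$; moreover $\chi_{(d)}(\alpha)$ for integer-coprime $d$ is exactly $\overline{\chi}_{(d)}(\alpha)$ up to conjugation, and $\chi_{(d)}$ restricted to $\Z$-values is trivial by \eqref{chi_at_integers} so only the genuine $\alpha$-dependence modulo $d$ survives as $\overline{\chi}_{(d)}$. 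Collecting: $\xi_{d,\mathrm{fin}}(\alpha) = \overline{\chi}_{(d)}(\alpha)\cdot\psi_d(\alpha)$, where $\psi_d$ is an explicit character of $\alpha$ modulo $(4)$ whose formula involves $(-1)^{\frac{\N(\boldsymbol{d})-1}{4}\cdot\frac{\N(\alpha)-1}{4}}$, the unit-symbols, $\chi_{(2+2i)}$, and $\chi_{(2)}$. I would then plug in $d\equiv 1,3,5,7\bmod 8$ (using $\N(\boldsymbol{d}) = d^2$, so $(\N(\boldsymbol{d})-1)/4 = (d^2-1)/4$, which is even iff $d\equiv 1\bmod 4$ and otherwise $(d^2-1)/4\equiv$ a known value mod $2$ depending on $d\bmod 8$) and match $\psi_d$ against the four right-hand sides of \eqref{expression xidfin} by comparing values on generators of $(\Z[i]/4)^\times$.

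The conductor statement then follows quickly: in the cases $d\equiv 1,5\bmod 8$ the "even part" $\psi_d$ is $\chi_{(2+2i)}$ (times $\chi_{(2)}$, which has conductor dividing $(2+2i)$ anyway — note $\chi_{(2)}$ is a character mod $(2)$ and $(2)\mid(2+2i)$... more precisely $(2+2i)^2=(8i)$ and $(2)\mid(2+2i)$, so conductor of the product is $(2+2i)$), so $\ff_d = ((2+2i)d)$; and since $d\equiv 1,5\bmod 8$ are exactly the odd squarefree $d$ with $d\equiv 1\bmod 4$, this gives the first line $((2+2i)d)$ for $d\equiv 1\bmod 4$. In the cases $d\equiv 3,7\bmod 8$ the even part involves $\overline{\chi}_{(4)}$, which has conductor exactly $(4)$ by the remark after \eqref{def-4}, so $\ff_d = (4d)$, matching the second line $d\equiv 3\bmod 4$. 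Throughout, coprimality of $(d)$ to $(2)$ ensures the conductor is the product of the two local conductors, and squarefreeness of $d$ ensures $\chi_{(d)}$ is primitive mod $(d)$ so the $d$-part of the conductor is exactly $(d)$.

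\textbf{Main obstacle.} The delicate bookkeeping is in the reciprocity flip: correctly tracking the unit-symbols $\left(\frac{\pm1,\pm i}{\alpha}\right)_4$ and the factor $\chi_{(2+2i)}(\alpha)$ that gets carried through, and verifying they assemble into precisely the advertised combination of $\chi_{(2)}$, $\chi_{(2+2i)}$, $\chi_{(4)}$ rather than some other character mod $(4)$. I expect this to require explicit evaluation on a generating set of $(\Z[i]/(4))^\times$, and care with the quadratic-reciprocity sign $(-1)^{(d^2-1)/4 \cdot (\N\alpha-1)/4}$, which is where the split between $d\equiv 1\bmod 4$ and $d\equiv 3\bmod 4$ — and hence the $\chi_{(2)}$ factor in the $d\equiv 3,5\bmod 8$ lines — actually originates.
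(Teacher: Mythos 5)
Your overall plan is the same as the paper's proof: flip $\overline{\chi}_{(\alpha)}(d)$ by quartic reciprocity applied to the primary generators, keep track of the leftover unit symbols, and identify the resulting $2$-part with a combination of $\chi_{(2)},\chi_{(2+2i)},\chi_{(4)}$. The problem is that the specific mechanism you single out in your ``main obstacle'' paragraph is wrong, and it is exactly where the case analysis happens. You claim that $(\N(\boldsymbol{d})-1)/4=(d^{2}-1)/4$ is even iff $d\equiv 1\bmod 4$; in fact $d^{2}\equiv 1\bmod 8$ for every odd $d$, so $(d^{2}-1)/4$ is even for \emph{all} odd $d$ (e.g.\ $d=3$ gives $2$), and the reciprocity sign $(-1)^{\frac{\N(\boldsymbol{d})-1}{4}\cdot\frac{\N(\boldsymbol{\alpha})-1}{4}}$ is identically $+1$. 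Hence neither the split between $d\equiv 1$ and $d\equiv 3 \bmod 4$ nor the $\chi_{(2)}$ factor in the $d\equiv 3,5\bmod 8$ lines can come from that sign, contrary to your closing assertion.

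In the paper's argument the two case distinctions have different sources, both of which your sketch lumps vaguely into ``unit symbols'': (i) after reciprocity one is left with $\overline{\chi}_{(d)}(\boldsymbol{\alpha})=\overline{\chi}_{(d)}(\alpha)\,\overline{\chi}_{(d)}\big(\chi_{(2+2i)}(\alpha)\big)$, and $\overline{\chi}_{(d)}$ at the unit $\chi_{(2+2i)}(\alpha)$ equals $\chi_{(2+2i)}(\alpha)^{-\sum_{\p\mid d}(\N(\p)-1)/4}$, whose exponent is $\equiv 0\bmod 4$ for $d\equiv\pm1\bmod 8$ and $\equiv 2\bmod 4$ for $d\equiv\pm3\bmod 8$; since $\chi_{(2+2i)}^{2}=\chi_{(2)}$, this is precisely where the $\chi_{(2)}$ factor comes from; (ii) when $d\equiv 3\bmod 4$ the primary associate of $(d)$ is $-d$, so an extra factor $\bquartic{-1}{\alpha}=(-1)^{(\N(\alpha)-1)/4}$ appears, and $\bquartic{-1}{\alpha}\,\chi_{(2+2i)}(\alpha)=\overline{\chi}_{(4)}(\alpha)$; this is where the $d\bmod 4$ split, and hence the conductor jump from $((2+2i)d)$ to $(4d)$, originates. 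If you executed your plan believing the reciprocity sign is nontrivial for $d\equiv 3\bmod 4$, you would insert a spurious $(-1)^{(\N(\alpha)-1)/4}$ from the sign while mishandling the genuine one coming from $\boldsymbol{d}=-d$, so the bookkeeping as described would not assemble into \eqref{expression xidfin} without first correcting this point. The remainder of your outline (evaluation on generators of $(\Z[i]/(4))^{\times}$, and the conductor argument using primitivity of $\chi_{(d)}$ and coprimality of the $2$-part and the $d$-part) is consistent with the paper and would go through once the above is fixed.
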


\begin{proof}

 If $d \equiv 1 \bmod 4$, then $d$ is primary, and noting that $\N(d)-1 \equiv 0 \bmod 8$, it follows from quartic reciprocity (Proposition \ref{reciprocity}) and \eqref{boldalpha} that
\[{\bquartic{d}{{\alpha}}} = {\bquartic{d}{\chi_{(2+2i)}(\alpha) {\alpha}}} = {\bquartic{\chi_{(2+2i)}(\alpha) {\alpha}}{d}}.\]
Thus, by multiplicativity,
\begin{align*}
\xi_{d,\text{fin}}(\alpha)= {\bquartic{\alpha}{d}} \; {\bquartic{\chi_{(2+2i)}(\alpha)}{d}}  \;{\chi}_{(2+2i)}(\alpha)= {\bquartic{\alpha}{d}}  \;{\chi}_{(2+2i)} (\alpha)^{1-\sum_{\p|d}\frac{(\N(\p)-1)}{4}}. \label{case-D-1}
\end{align*}
Moreover, observe that for $p \equiv 3 \bmod 4$, one has 
$
\frac{\N((p)) -1}{4} = \begin{cases}
		0\bmod 4 & \textnormal{ if } p\equiv  7, 15\bmod{16}\\
	2\bmod 4 & \textnormal{ if } p\equiv  3,11 \bmod{16}
\end{cases}.
$
Similarly for $p  \equiv 1 \bmod 4$, one has 
$
\sum_{\p\mid p}\frac{\N(\p) -1}{4} = \begin{cases}
	0\bmod 4 & \textnormal{ if } p\equiv  1 \bmod 8 \\
		2\bmod 4 & \textnormal{ if } p\equiv  -3 \bmod 8
\end{cases}$.\\
These together imply that 
\begin{equation*}\label{sum of norm even}
	\sum_{\p \mid d}\frac{\N(\p) -1}{4} \equiv 2\,\# \lbrace p\mid d : p \equiv \pm 3 \bmod8\rbrace \bmod 4 \equiv \begin{cases}
		0\bmod4  & \textnormal{ if } d\equiv \pm 1 \bmod 8\\
		2\bmod4 & \textnormal{ if } d\equiv \pm 3 \bmod 8.
	\end{cases} 
\end{equation*}
Upon noting that  $\chi^{2}_{(2+2i)} = \chi_{(2)}$, one has
\begin{equation}\label{power of chi2+2i}
	{\chi}_{(2+2i)} ^{-\sum_{\p|d}\frac{\N(\p)-1}{4}} = \begin{cases}
		\chi_0 & \textnormal{ if } d\equiv \pm 1 \bmod 8\\
		\chi_{(2)} & \textnormal{ if } d\equiv \pm 3 \bmod 8,
	\end{cases}
\end{equation}
where $\chi_0$ is the trivial character modulo $(2+2i)$. 
Regrouping, \eqref{expression xidfin} now follows for the case $d\equiv 1 \bmod 4$. \\

Next, suppose $d \equiv 3 \bmod 4$.  Then $-d$ is primary, and similarly to above we find that 
\begin{align} \nonumber
\xi_{d,\text{fin}}(\alpha) &= {{\bquartic{-1}{ {\alpha}}}} \; {{\bquartic{-d}{ {\alpha}}}} {\chi}_{(2+2i)}(\alpha) \\ \nonumber
&=   {{\bquartic{-1}{{  \alpha}}}} \; {\bquartic{\alpha}{-d}} \; {\bquartic{\chi_{(2+2i)}(\alpha)}{-d}}  {\chi}_{(2+2i)}(\alpha)  \\ \nonumber
&=   {{\bquartic{-1}{{\alpha}}}} \; {\bquartic{\alpha}{d}}  {\chi}_{(2+2i)} (\alpha)^{1-\sum_{\p|d}\frac{(\N(\p)-1)}{4}}.
\end{align}
Since
\begin{align*}
 \bquartic{-1}{\alpha} = (-1)^{(\N(\alpha)-1)/4} = \begin{cases} 1 & \alpha \equiv \pm 1, \pm i \bmod 4 \\
-1 & \alpha \equiv  \pm (3+2i), \pm i(3+2i)\bmod 4, \end{cases}
\end{align*}
it follows from \eqref{def-2+2i} and \eqref{def-4} that
\[\bquartic{-1}{\alpha}{\chi}_{(2+2i)} (\alpha) = \overline{\chi}_{(4)}(\alpha).\]
Combining the above with~\eqref{power of chi2+2i}, the lemma follows.
 \end{proof}

Finally, for any $k \geq 1$, we consider
\begin{equation}\label{sympowerD}
\xi_{d}^k : (\alpha) \mapsto \xi_{d,\text{fin}}^k(\alpha) \left( \frac{\alpha}{|\alpha|} \right)^k.
\end{equation}
Let $\xi_{d,k}$ denote the primitive character inducing $\xi_{d}^k$. Noting that $\chi^{2}_{(2+2i)} = \chi_{(2)}= \overline{\chi}^{2}_{(4)}$ has conductor~$(2)$, it follows from \eqref{expression xidfin} that 
$\xi_{d,k}$ is a character with frequency $k$ and conductor
\begin{equation}\label{conductor}
\mathfrak{f}_{d,k} = \begin{cases}
\mathfrak{f}_d & \text{$k$ odd} \\
(2d) & k \equiv 2 \bmod 4 \\
(1) & k \equiv 0 \bmod 4. \end{cases} 
\end{equation}

Since the same set of primes divide the conductors of both $\xi_{d,k}$ and $\xi_{d}$ whenever $k \not \equiv 0 \bmod 4$, it follows that
\begin{equation}\label{conductor_2}
	\xi_{d,k}(\p) = 
	\begin{cases}
		\xi_{d}^{k}(\p) = 
		\displaystyle{\left(\frac{\boldsymbol{\pi}_{\fp}}{|\boldsymbol{\pi}_{\fp}|}\right)^{k}}  \cdot \overline{\chi}_{\fp}(d)^k
		& \textnormal{ if }\p \nmid \mathfrak{f}_{d}\\
		0 & \textnormal{ if }  \p \mid \mathfrak{f}_{d} \textnormal{ and } k \not \equiv 0 \bmod 4\\
		\left(\displaystyle \frac{\pi_{\fp}}{|\pi_{\fp}|}\right)^{k} & \textnormal{ if } \p = (\pi_{\fp}) \mid \mathfrak{f}_{d} \textnormal{ and } k \equiv 0 \bmod 4,
	\end{cases}
\end{equation}
where, in the last case, we further note that $\xi_{d,k}(\p)$ does not depend on the chosen generator~$\pi_{\fp}$ of~$\p$.

We conclude this section with the following definitions of the angle of the Gaussian primes with respect to the character $\xi_d$.
\begin{defn} Let $d$ be an odd square-free integer, and $p \equiv 1 \bmod 4$. We write $(p) = \fp \overline{\fp}$ and recall that $\xi_d(\overline{\fp}) = \overline{\xi_d(\fp)}$. We define $\theta_{d,p}$ and~$z_{d,p}$ by
\begin{align} \label{def-angle}
\xi_d(\fp) + \xi_d(\overline{\fp}) &:= 2 \cos{\theta_{d,p}}, \;\; \theta_{d,p} \in (0, \pi) \\ \label{def-z} 
z_{d, p} &:= \sqrt{p} e^{i \theta_{d,p}}  \in \Z[i].
\end{align}\end{defn}
For example, if $d=1$, then $z_{1,p} = a+2bi$ is primary with $b \geq 0$, and $p = a^2 + 4b^2.$
We note that, in general, for $p \equiv 1 \bmod 4$, $p\nmid d$, with $(p) = \p \overline{\p}$, we have that
\begin{equation}\label{z_in_terms_xi}
\xi_{d,k}(\p)+\xi_{d,k}(\overline{\p}) = 2 \cos k \theta_{d,p},
\end{equation}
where $\theta_{d,p} \in (0, \pi)$ is the angle defined by \eqref{def-angle}.

\subsection{Computing Root Numbers}\label{background-RN}
From the work of Hecke \cite{Hecke1920},   $L(s, \xi_{d,k})$ has  analytic continuation to $\C$. Noting that the discriminant of $\Q(i)$ is equal to $-4$, we define the \textit{completed $L$-function}
\begin{align}\label{completed_L-function}
\Lambda(s, \xi_{d,k}) &:= \left( {4 \N(\ff_{d,k})} \right)^{s/2} (2 \pi)^{-s}
 \Gamma \left( {s + \frac{k}{2}} \right) L(s, \xi_{d,k}) 
 \end{align}
where $\ff_{d,k}$ is as in \eqref{conductor}.  As in \cite[Thm. 3.8]{IwanKow}, we note that $\Lambda(s, \xi_{d,k})$  satisfies the functional equation
\begin{align}\label{FE}
\Lambda(s, \xi_{d,k}) = W(\xi_{d,k}) \Lambda(1-s, {\overline{\xi}_{d,k}}) =W(\xi_{d,k}) \Lambda(1-s,\xi_{d,k}),
\end{align}
where the sign of the functional equation is denoted by the \textit{root number} $W(\xi_{d,k}) = \pm 1$, and where the last equality follows 
upon noting that $\overline{\xi}_{d,k}(\mathfrak{a}) = \xi_{d,k}(\overline{\mathfrak{a}})$.

We now proceed to compute $W(\xi_{d,k})$ explicitly, for any given $k \in \mathbb{N}$ and odd square-free $d \in \Z$.
\begin{lem} \label{lemma-sign} Let $d$ be an odd square-free integer. If $k$ is even, the root number of $\xi_{d,k}$ is
\begin{align} \label{k-even} W(\xi_{d,k}) =  1.\end{align} 
If $d \equiv 1 \bmod 4$, then it satisfies
\begin{align} \label{d=1mod4}
W(\xi_{d,k}) = \begin{cases} 
-\mathrm{sgn}(d) & \textnormal{ if } d \equiv 5,9 \bmod {16} \textnormal{ and } k \equiv 1,3 \bmod 8\\
\mathrm{sgn}(d) & \textnormal{ if } d \equiv 5,9 \bmod {16} \textnormal{ and } k \equiv 5,7 \bmod 8\\
\mathrm{sgn}(d) & \textnormal{ if } d \equiv 1,13 \bmod {16} \textnormal{ and } k \equiv 1,3 \bmod 8\\
- \mathrm{sgn}(d) & \textnormal{ if } d \equiv 1,13 \bmod {16} \textnormal{ and } k \equiv 5,7 \bmod 8,
\end{cases}
\end{align}
and if $d \equiv 3 \bmod 4$, we have
\begin{align}  \label{d=3mod4} 
	W(\xi_{d,k}) =\begin{cases}
\mathrm{sgn}(d) & \textnormal{ if } d \equiv 3 \bmod 8 \textnormal{ and } k \equiv 1 \bmod 4\\
-\mathrm{sgn}(d) & \textnormal{ if } d \equiv 3 \bmod 8 \textnormal{ and } k \equiv 3 \bmod 4\\
-\mathrm{sgn}(d) & \textnormal{ if } d \equiv 7 \bmod 8 \textnormal{ and } k \equiv 1 \bmod 4\\
\mathrm{sgn}(d) & \textnormal{ if } d \equiv 7 \bmod 8 \textnormal{ and } k \equiv 3 \bmod 4.
\end{cases}
\end{align}
For any fixed odd square-free  $d \in \mathbb Z$, we therefore conclude that
\begin{align} \label{average-RN}
\lim_{K \rightarrow \infty} \frac{\#\{1 \leq k \leq K: W(\xi_{d,k}) = -1\}}{K} = \frac{1}{4}.
\end{align}
\end{lem}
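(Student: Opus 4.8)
The plan is to compute the root number $W(\xi_{d,k})$ via the standard formula expressing it in terms of a normalized Gauss sum attached to the finite part $\xi_{d,\mathrm{fin}}^k$ and a factor coming from the infinite type (the frequency $k$). Concretely, for a primitive Hecke character $\psi$ of $\Q(i)$ of frequency $\ell$ and conductor $\ff$, the functional equation of $\Lambda(s,\psi)$ has root number $W(\psi) = \tau(\psi)/(i^{\ell}\sqrt{\N(\ff)})$ (up to a fixed normalization depending on conventions, which one fixes once), where $\tau(\psi)$ is the Gauss sum $\sum_{x \bmod \ff} \psi_{\mathrm{fin}}(x)\, e_{\ff}(x)$. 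Since $W(\xi_{d,k})$ is forced to be $\pm 1$ by \eqref{FE} (because $\overline{\xi}_{d,k}(\mathfrak{a}) = \xi_{d,k}(\overline{\mathfrak{a}})$ makes the character ``self-dual'' in the relevant sense), it suffices to pin down the sign.

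First I would treat the case $k$ even. By \eqref{conductor}, when $k \equiv 0 \bmod 4$ the conductor is trivial, so $\xi_{d,k}$ is the character $(\alpha) \mapsto (\alpha/|\alpha|)^k$, which is an unramified Hecke character of even frequency; its $L$-function is (a shift of) a Hecke $L$-function whose completed version is manifestly self-dual with trivial Gauss sum, giving $W = 1$. When $k \equiv 2 \bmod 4$, the conductor is $(2d)$ and $\xi_{d,k} = \xi_{d,\mathrm{fin}}^2 \cdot (\alpha/|\alpha|)^2$ up to inducing; here $\xi_{d,\mathrm{fin}}^2$ is a \emph{quadratic} character (a product of the Kronecker-type symbol $\chi_{(d)}^2 = $ trivial on integers via \eqref{chi_at_integers}, times $\chi_{(2)}$), and one computes the associated quadratic Gauss sum explicitly — it is real and positive after the $i^{k}$ normalization with $k \equiv 2$, yielding $W = 1$. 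This gives \eqref{k-even}.

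Next, for $k$ odd, the conductor is $\ff_d$ (either $((2+2i)d)$ or $(4d)$ by Lemma~\ref{Lem expression xidfin}), and $\xi_{d,k}$ genuinely depends on $d \bmod 16$ and $k \bmod 8$ through the quartic pieces. The Gauss sum $\tau(\xi_{d,k})$ factors (by CRT on the conductor, which is coprime-to-$2$ part $(d)$ times a power of $(1+i)$) into a quartic Gauss sum at $(d)$ and a local Gauss sum at the even prime. The quartic Gauss sum over $(d)$ for $\overline{\chi}_{(d)}^k$ is evaluated by quartic reciprocity (Proposition~\ref{reciprocity}) and the classical fact that $|\tau|^2 = \N(d)$; its \emph{argument} is controlled by $\chi_{(d)}(1+i) = i^{(\boldsymbol{d}-1)/4}$ from \eqref{eisenstein_reciprocity} together with $\chi_{(d)}$ evaluated at the relevant roots of unity, and this is where the dependence on $d \bmod 16$ (equivalently on $(\boldsymbol{d}-1)/4 \bmod 4$) and on $k \bmod 4$ enters. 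The $\sgn(d)$ factor enters because the primary generator of $(d)$ differs from $d$ by a unit whose fourth power / sign interacts with the infinite type $(\alpha/|\alpha|)^k$; more precisely the root number of a Hecke character of $\Q(i)$ built from $\overline{\chi}_{(d)}$ carries the sign of $d$ through the relation between $d$ and $\boldsymbol{d}$ and the $i^k$-normalization. One then splits into the four cases of \eqref{d=1mod4} and \eqref{d=3mod4} by bookkeeping $(\boldsymbol d - 1)/4 \bmod 4$ against $k \bmod 8$; the even-prime local Gauss sum (for $\chi_{(2+2i)}$, $\chi_{(2)}$, $\chi_{(4)}$ as appropriate) contributes the finitely many remaining $\pm 1$'s and is computed directly from the definitions \eqref{def-2}, \eqref{def-2+2i}, \eqref{def-4}.

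Finally, to deduce \eqref{average-RN}: by \eqref{k-even}, $W(\xi_{d,k}) = 1$ for all even $k$, so these contribute $0$ to the limiting density of $-1$'s. Among odd $k$, the formulas \eqref{d=1mod4}--\eqref{d=3mod4} show that, for each fixed $d$, exactly half of the residue classes $k \bmod 8$ (resp.\ $k \bmod 4$) that are odd give $W = -1$: e.g.\ in \eqref{d=1mod4}, for a fixed $d \bmod 16$, two of the four odd classes mod $8$ give $-\sgn(d)$ and two give $+\sgn(d)$, and exactly the classes mapping to $-1$ contribute. Hence among all $k \le K$, the proportion with $W = -1$ is $\tfrac12 \cdot \tfrac12 + o(1) = \tfrac14 + o(1)$, since odd $k$ have density $\tfrac12$ and exactly half of those have root number $-1$. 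Letting $K \to \infty$ gives the claim.

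\smallskip
\noindent\textbf{Main obstacle.} The crux is the evaluation of the \emph{argument} (not just the absolute value) of the quartic Gauss sum $\tau(\overline{\chi}_{(d)}^k)$ over the possibly composite squarefree modulus $(d)$, and correctly tracking how it combines with the sign ambiguity $\boldsymbol{d}$ versus $d$ and with the infinite-type normalization $i^{-k}$ to produce the clean $\pm\sgn(d)$ answer. The potential composite nature of $d$ means one must iterate quartic reciprocity across prime factors and control cross terms $(-1)^{\frac{\N(\p)-1}{4}\frac{\N(\q)-1}{4}}$, exactly as in the conductor computation in Lemma~\ref{Lem expression xidfin}; reusing that bookkeeping (the parity count of primes $p \mid d$ with $p \equiv \pm 3 \bmod 8$) is what keeps this manageable. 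The even-prime local factors are routine finite computations by comparison.
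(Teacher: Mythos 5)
Your overall strategy is the same as the paper's: write $W(\xi_{d,k})$ as a normalized Gauss sum of $\xi_{d,k,\mathrm{fin}}$ (times the infinite-type factor), split the conductor by CRT into the even part and the odd rational part $(d)$, evaluate the local factors, and then do the case bookkeeping in $d \bmod 16$ and $k \bmod 8$; your deduction of \eqref{average-RN} from the case formulas and your treatment of $k\equiv 0 \bmod 4$ also match the paper. However, at the step you yourself flag as the main obstacle there is a genuine gap: you propose to determine the \emph{argument} of the quartic Gauss sum attached to $\overline{\chi}_{(d)}^{k}$ from quartic reciprocity together with $|\tau|^{2}=\N(d)$ and the supplement \eqref{eisenstein_reciprocity}. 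For $k$ odd these characters are genuinely quartic, and absolute value plus reciprocity only pins the Gauss sum down up to a fourth root of unity, which is exactly the ambiguity you need to resolve to get the $\pm\mathrm{sgn}(d)$ in \eqref{d=1mod4}--\eqref{d=3mod4}; moreover \eqref{eisenstein_reciprocity} does not enter through the Gauss sum at $(d)$ at all, but through the CRT cross-factor $\overline{\chi}^{k}_{(d)}(g)$ (the value of the odd character at the even modulus $g\in\{2,2+2i,4\}$), so as written your plan has no mechanism that actually computes the local Gauss sums at the primes dividing $d$.

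What makes these local sums computable in closed form — and what the paper uses — is the structural fact \eqref{chi_at_integers}: for a rational prime $p\mid d$, the character $\chi_{(p)}$ is trivial on the rational residues $(\Z/p\Z)^{\times}\subset(\Z[i]/(p))^{\times}$. Writing $x=a+bi$ and using this triviality plus orthogonality, the sum $\sum_{b}\overline{\chi}^{k}_{(p)}(a+bi)$ is independent of $a\in(\Z/p\Z)^{\times}$ and equals $-\overline{\chi}^{k}_{(p)}(i)$, whence the normalized local factor is exactly $\overline{\chi}^{k}_{(p)}(i)\in\{\pm1\}$ (this is \eqref{gauss_sum_prime}); no determination of arguments of ``naked'' quartic Gauss sums is needed, and the composite-$d$ bookkeeping reduces to counting primes $p\mid d$ with $p\equiv\pm3\bmod 8$, as in \eqref{root_number_product}. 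The $\mathrm{sgn}(d)$, incidentally, arises from evaluating the even-part character $\eta_{d,k}$ at $|d|$ in the CRT unfolding \eqref{root_number_split}, not from a unit discrepancy between $d$ and $\boldsymbol{d}$ inside the Gauss sum. So your outline needs this (or an equivalent) evaluation inserted before the case analysis can be carried out; with it, the rest of your plan goes through essentially as in the paper.
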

 In particular, recalling definition~\eqref{def S+-}, we have
 \begin{equation}\label{set s- computed}
 	S_{-}(d) = \begin{cases} 	
 		\lbrace 5,7\rbrace &\text{ if } d \equiv 1, 13 \bmod{16}  \text{ and } d>0. \\
 		\lbrace 1,3\rbrace &\text{ if } d \equiv 1, 13 \bmod{16}  \text{ and } d<0. \\
 		\lbrace 1,3\rbrace &\text{ if } d \equiv 5, 9 \bmod{16}  \text{ and } d>0. \\
 		\lbrace 5,7\rbrace &\text{ if } d \equiv 5, 9 \bmod{16}  \text{ and } d<0. \\
 		\lbrace 3, 7\rbrace &\text{ if } d \equiv 3 \bmod{8}  \text{ and } d>0. \\
 		\lbrace 1,5\rbrace &\text{ if } d \equiv 3 \bmod{8}  \text{ and } d<0. \\
 		\lbrace 1,5\rbrace &\text{ if } d \equiv 7 \bmod{8}  \text{ and } d>0. \\
 		\lbrace 3,7\rbrace &\text{ if } d \equiv 7 \bmod{8}  \text{ and } d<0. \\
 	\end{cases}
 \end{equation}

\begin{proof}[Proof of Lemma \ref{lemma-sign}]

As in \cite[$(3.85)$, $(3.86)$]{IwanKow} (see also \cite[\S~4]{De2023+}), we will use the formula
    	\begin{align*}
	W(\xi_{d,k}) = i^{-k} \N(\mathfrak{f}_{d,k})^{-\frac12} \xi_{d,k,\infty}(\gamma_{d,k}) \sum_{x \in \mathbb{Z}[i]/\mathfrak{f}_{d,k}}\xi_{d,k,\mathrm{fin}}(x) e^{2\pi i \tr\left(\tfrac{x}{\gamma_{d,k}}\right)}
\end{align*}
where we take $\gamma_{d,k} \in \Z[i]$ to be any generator of the ideal $(2) \mathfrak{f}_{d,k}$, and $\mathfrak{c} = \Z[i]$, so that $(\mathfrak{c},\mathfrak{f}_{d,k}) = 1$ and $(2)\mathfrak{c}\mathfrak{f}_{d,k} = (\gamma_{d,k})$.

First, we treat the case $k \equiv 0 \bmod 4$.  By~\eqref{conductor} we may choose $\gamma_{d,k} = 2$, from which it follows that
    	\begin{align*}
	W(\xi_{d,k}) = e^{2\pi i \tr\left(\tfrac{1}{2}\right)} = 1.
\end{align*}

For $k \not\equiv 0 \bmod 4$, we note by Lemma~\ref{Lem expression xidfin} that
\[\xi_{d,k,\mathrm{fin}}(x) =
\overline{\chi}_{(d)}^{k}(x)\cdot \eta_{d,k}(x),
\]
where
 \begin{align*}
\eta_{d,k} &:=
\begin{cases}
 \chi_{(2)} & \textnormal{ when } k \equiv 2 \bmod 4 \\
\chi_{(2+2i)}^k & \textnormal{ when } d\equiv 1 \bmod 8 \text{ and } k \text{ is odd}\\
\chi_{(2+2i)}^k\chi_{(2)} & \textnormal{ when } d\equiv 5 \bmod 8 \text{ and } k \text{ is odd}\\
\overline{\chi}^{k}_{(4)}\chi_{(2)} & \textnormal{ when } d\equiv 3 \bmod 8 \text{ and } k \text{ is odd}\\
\overline{\chi}^{k}_{(4)} & \textnormal{ when }d\equiv 7 \bmod 8 \text{ and } k \text{ is odd},
\end{cases}
\end{align*}
 is a primitive character modulo $(g)$ with  $$g:=\begin{cases}
 	2 & \textnormal{ when }k \equiv 2 \bmod4\\
 	2+2i & \textnormal{ when }k\equiv 1 \bmod 2 \textnormal{ and } d\equiv 1\bmod 4\\
 	4 & \textnormal{ when }k\equiv 1 \bmod 2\textnormal{ and } d\equiv 3 \bmod 4.\end{cases}$$
Write $\lvert d\rvert = \prod_j p_j$, where $p_j$ run through the distinct {\it rational} primes dividing the odd square-free  integer $d$. 
Since $(g,d)=1$, by the Chinese remainder theorem there exists a ring isomorphism
\[\Z[i]/(g) \times \prod_{p_{j}|d}\Z[i]/(p_{j}) \rightarrow \Z[i]/(gd) \quad (x_{0},(x_{j})_{j}) \mapsto u x_{0}+\sum_{j}v_{j}x_{j},\]
where $u \equiv 1\bmod g$ and $u \equiv 0 \bmod d$, while $v_{j} \equiv 1 \bmod {p_{j}}$  and  $v_{j} \equiv 0 \bmod {\frac{gd}{p_{j}}}$ for all $j$. Upon choosing $\gamma_{d,k} = 2g|d|$, 
we find that
	\begin{align}
\begin{split}
&\sum_{x \in \mathbb{Z}[i]/\mathfrak{f}_{d,k}}\xi_{d,k,\mathrm{fin}}(x) e^{2\pi i \tr\left(\tfrac{x}{\gamma_{d,k}}\right)} = \sum_{x \in \mathbb{Z}[i]/(gd)}\overline{\chi}_{(d)}^{k}(x)\cdot \eta_{d,k}(x) e^{2\pi i \tr\left(\tfrac{x}{2g|d|}\right)}\\
&= \sum_{x_{0} \in \mathbb{Z}[i]/(g)}\prod_{p_{j}|d}\sum_{x_{j} \in \mathbb{Z}[i]/(p_{j})} \overline{\chi}^{k}_{(p_{j})}\Big(ux_{0}+\sum_{j}v_{j} x_{j}\Big)\eta_{d,k}\Big(ux_{0}+\sum_{j}v_j x_{j}\Big)  e^{2\pi i \tr\left(\tfrac{u x_{0} + \sum_{j}v_{j}x_{j}}{2g|d|}\right)}\\
&= \sum_{x_{0} \in \mathbb{Z}[i]/(g)}\eta_{d,k}\left(ux_{0}\right)e^{2\pi i \tr\left(\tfrac{u x_{0}}{2g|d|}\right)}\prod_{p_{j}|d}\sum_{x_{j} \in \mathbb{Z}[i]/(p_{j})} \overline{\chi}^{k}_{(p_{j})}\left(v_{j} x_{j}\right)  e^{2\pi i \tr\left(\tfrac{v_{j}x_{j}}{2g|d|}\right)}.
\end{split}
\end{align}

Applying the change of variables $\alpha = u x_{0}/|d|$ and $\beta =
v_j x_{j}p_{j}/g|d|$, we then find that
	\begin{align}\label{root_number_split}
\begin{split}
W(\xi_{d,k}) &=  \frac{i^{-k}}{|gd|} 
\left(\frac{g}{|g|}\right)^{k}
\eta_{d,k}(|d|)\sum_{\alpha \in \mathbb{Z}[i]/(g)} \eta_{d,k}(\alpha) e^{2\pi i \tr\left(\tfrac{\alpha}{2g}\right)} \\
	&\phantom{=}\times  \prod_{\substack{p_{j}|d\\ p_{j} > 0}}
	\overline{\chi}^{k}_{(p_{j})}\left(\frac{g|d|}{p_{j}}\right)\sum_{\beta \in \mathbb{Z}[i]/(p_{j})}\overline{\chi}^{k}_{(p_{j})}(\beta) e^{2\pi i \tr\left(\tfrac{\beta}{2p_{j}}\right)} \\
	&= W(\xi_{d,k}, 2) \times \eta_{d,k}(|d|) \times \overline{\chi}^k_{(d)}(g) 
	\times \prod_{\substack{p_{j}|d\\ p_{j} > 0}} W(\xi_{d,k}, p_j) 
\end{split}
\end{align}
where
\begin{align*}
W(\xi_{d,k}, 2) &:= \frac{i^{-k}}{|g|} 
\Big(\frac{g}{|g|}\Big)^{k}
\sum_{\alpha \in \mathbb{Z}[i]/(g)} \eta_{d,k}(\alpha) e^{2\pi i \tr\left(\tfrac{\alpha}{2g}\right)} \\
W(\xi_{d,k}, p) &:= \frac{1}{p}\sum_{x \in \mathbb{Z}[i]/(p)}\overline{\chi}^{k}_{(p)}(x) e^{2\pi i \tr\left(\tfrac{x}{2p}\right)},
\end{align*}
and where we have used  \eqref{chi_at_integers}.

We first study the contribution of $2$.
For $k\equiv 2 \bmod 4$, we compute 
\begin{align}\label{gauss_sum_even}
W(\xi_{d,k}, 2) &= - \frac{1}{2} 
	\sum_{x \in \mathbb{Z}[i]/(2)} \eta_{d,k}(x)e^{2\pi i \tr\left(\tfrac{x}{4}\right)} = - \frac{1}{2} 
	\left(1\cdot 
e^{i \pi}-1 \cdot 1	\right) = 1.
\end{align}
If $k$ is odd and $d \equiv 1 \bmod 4$, then we compute
\begin{align} \nonumber 
i^k e^{\frac{-i \pi k}{4}} W(\xi_{d,k}, 2) &= \frac{1}{2\sqrt{2}} 
	\sum_{x \in \mathbb{Z}[i]/(2+2i)} \eta_{d,k}(x)e^{2\pi i \tr\left(\tfrac{x}{4+4i}\right)} \\
	&= \frac{1}{2\sqrt{2}} \bigg(1\cdot e^{\frac{i \pi}{2}}-e^{-\frac{i \pi}{2}}	+ \eta_{d,k}(i)e^{\frac{i \pi}{2}} +\overline{\eta_{d,k}( i)}e^{\frac{-i \pi}{2}}\bigg)\nonumber \\
	&\label{gauss_sum_1mod4}= \frac{i}{\sqrt{2}} \left(1+\eta_{d,k}(i)\right) 
	 =  \begin{cases}
e^{i \left(\frac{\pi}{2}\mp \frac{\pi}{4}\right)} & \textnormal{ if } k \equiv \pm 1 \bmod 4 \textnormal{ and } d\equiv 1 \bmod 8 \\
e^{i\left(\frac{\pi}{2}\pm\frac{\pi}{4}\right)} & \textnormal{ if } k \equiv \pm1 \bmod 4 \textnormal{ and } d\equiv -3 \bmod 8.
	\end{cases}
\end{align}
Similarly when $k$ is odd and $d\equiv 3 \bmod 4$, we compute
\begin{multline} \label{gauss_sum_3mod4}
i^k  W(\xi_{d,k}, 2) = \frac{1}{4} 
\sum_{x \in \{\pm 1, \pm i, \pm (3+2i),\pm (-2+3i)\}} \eta_{d,k}(x)e^{2\pi i \tr\left(\tfrac{x}{8}\right)}\\ 
= \frac{1}{4} \bigg(1\cdot e^{\frac{i \pi}{2}}-e^{-\frac{i \pi}{2}} + \eta_{d,k}(i) + \eta_{d,k}(-i) 
	 -e^{\frac{3 \pi i}{2}} +e^{-\frac{3 \pi i}{2}} -\eta_{d,k}(2-3i) - \eta_{d,k}(-2 +3i)\bigg) = i. 
\end{multline}

Let us now study the contribution at a rational odd prime $p$. 
We have
\begin{align*}
\sum_{x \in \mathbb{Z}[i]/(p)}\overline{\chi}^{k}_{(p)}(x) e^{2\pi i \tr\left(\tfrac{x}{2p}\right)}&=\sum_{a \in \mathbb{Z}/(p)}\sum_{b \in \mathbb{Z}/(p)}\overline{\chi}^{k}_{(p)}(a+bi) e^{2\pi i \tr\left(\tfrac{a+bi}{2p}\right)}\\
&=\sum_{b \in \mathbb{Z}/(p)}\overline{\chi}^{k}_{(p)}(bi)+\sum_{a \in (\mathbb{Z}/(p))^{\times}}\sum_{b \in \mathbb{Z}/(p)}\overline{\chi}^{k}_{(p)}(a+bi) e^{2\pi i \tr\left(\tfrac{a+bi}{2p}\right)}\\
&=(p-1)\overline{\chi}^{k}_{(p)}(i)+\sum_{a \in (\mathbb{Z}/(p))^{\times}}e^{ \tfrac{2\pi i a}{p}}\sum_{b \in \mathbb{Z}/(p)}\overline{\chi}^{k}_{(p)}(a+bi)
\end{align*}
by \eqref{chi_at_integers}.  Note that the sum
\[\sum_{b \in \mathbb{Z}/(p)}\overline{\chi}^{k}_{(p)}(a+bi) = \overline{\chi}^{k}_{(p)}(a)\sum_{b \in \mathbb{Z}/(p)}\overline{\chi}^{k}_{(p)}(1+bi) = \sum_{b \in \mathbb{Z}/(p)}\overline{\chi}^{k}_{(p)}(1+bi)\]
is independent of $a \in (\mathbb{Z}/(p))^{\times}$.  Moreover, by orthogonality, we find that
\[0 = \sum_{a \in \mathbb{Z}/(p)}\sum_{b \in \mathbb{Z}/(p)}\overline{\chi}^{k}_{(p)}(a+bi) = \sum_{a \in (\mathbb{Z}/(p))^{\times}}\sum_{b \in \mathbb{Z}/(p)}\overline{\chi}^{k}_{(p)}(a+bi)+(p-1)\overline{\chi}^{k}_{(p)}(i).\]
It follows that for all $a \in (\mathbb{Z}/(p))^{\times}$,
\[\sum_{b \in \mathbb{Z}/(p)}\overline{\chi}^{k}_{(p)}(a+bi)= -\overline{\chi}^{k}_{(p)}(i).\]
Hence
\begin{align}
W(\xi_{d,k}, p) = \frac{1}{p}\sum_{x \in \mathbb{Z}[i]/(p)}\overline{\chi}^{k}_{(p)}(x) e^{2\pi i \tr\left(\tfrac{x}{2p}\right)}&=\frac{\overline{\chi}^{k}_{(p)}(i)}{p}\bigg((p-1)-\sum_{a \in (\mathbb{Z}/(p))^{\times}}e^{\tfrac{2\pi i a}{p}}\bigg) \nonumber\\
\label{gauss_sum_prime}
&=  \overline{\chi}^{k}_{(p)}(i)=\begin{cases}
(-1)^{k}& \textnormal{ when }p \equiv \pm 3 \bmod 8\\
1 & \textnormal{ when }p \equiv \pm 1 \bmod 8,
\end{cases}
\end{align}
and therefore for odd square-free $d \in \mathbb{Z}$,
\begin{equation}\label{root_number_product}
\prod_{\substack{p_{j}|d\\ p_{j} > 0}} W(\xi_{d,k}, p_j)= \begin{cases}
(-1)^k& \textnormal{ when }d \equiv \pm 3 \bmod 8\\
1 & \textnormal{ when }d \equiv \pm 1 \bmod 8.
\end{cases}
\end{equation}
By \eqref{root_number_split}, \eqref{gauss_sum_even} and 
\eqref{root_number_product}, it follows that when $k \equiv 2 \bmod 4$,
	\begin{align*}
	W(\xi_{d,k})	&= \eta_{d,k}(|d|)= 1.
\end{align*}
This proves \eqref{k-even}.  When $k \equiv  \pm 1 \bmod 4$ and $d \equiv 1 \bmod 8$, it follows from
\eqref{eisenstein_reciprocity}, \eqref{root_number_split}, \eqref{gauss_sum_1mod4} and 
\eqref{root_number_product}, that
\begin{align*}
	W(\xi_{d,k})	&=W(\xi_{d,k}, 2) \times \eta_{d,k}(|d|) \times \overline{\chi}^k_{(d)}(2+2i) 
	\times \prod_{\substack{p_{j}|d\\ p_{j} > 0}} W(\xi_{d,k}, p_j)\\
&=\mp ie^{\frac{i \pi k}{4}} \; e^{i \left(\frac{\pi}{2}\mp \frac{\pi}{4}\right)}
	\; \chi_{(2+2i)}^{k}(|d|) \; \overline{\chi}_{(d)}^{k}(1+i)\overline{\chi}_{(d)}^{k}(2)\\
	&= \pm e^{ \frac{i\pi (k \mp 1)}{4}} \mathrm{sgn}(d)
(-i)^{\frac{k(d-1)}{4}},
\end{align*}
where in the last line we note that 
$\chi_{(2+2i)}^{k}(|d|) =\mathrm{sgn}(d)$.
Similarly, when $k \equiv \pm 1 \bmod 4$ and $d \equiv 5 \bmod 8$, we find that
\begin{align*}
	W(\xi_{d,k})	&= \pm i e^{\frac{i \pi k}{4}} \; e^{i \left(\frac{\pi}{2}\pm \frac{\pi}{4}\right)} \;
	\eta_{d,k}(|d|)\;  \overline{\chi}_{(d)}^{k}(2+2i)\\
	&= \mp e^{ \frac{i \pi(k \pm 1)}{4}} \mathrm{sgn}(d)(-i)^{\frac{k(d-1)}{4}}.
\end{align*}
This proves \eqref{d=1mod4}.
Finally, when $k$ is odd and $d \equiv 3 \bmod 8$, we note by \eqref{root_number_split}, \eqref{gauss_sum_3mod4}, \eqref{root_number_product}, that
\begin{align*}
	W(\xi_{d,k})	&= W(\xi_{d,k}, 2) \times \eta_{d,k}(|d|) \times \overline{\chi}^k_{(d)}(4) 
	\times \prod_{\substack{p_{j}|d\\ p_{j} > 0}} W(\xi_{d,k}, p_j)\\
&= - i^{1-k}  \overline{\chi}^{k}_{(4)}(|d|) \;
	 =(-1)^{\frac{k-1}{2}}\mathrm{sgn}(d),
\end{align*}
 and if $d \equiv 7 \bmod 8$, we have
\begin{align*}
	W(\xi_{d,k})	= i^{1-k}  \overline{\chi}^{k}_{(4)}(|d|)
	=-(-1)^{\frac{k-1}{2}}\mathrm{sgn}(d),
\end{align*}
which proves \eqref{d=3mod4}.
\end{proof}

\section{Computing $\mathcal{D}(\phi, \xi_{d,k})$}
\label{section compute D_k}

To begin our analysis of $\mathcal{D}(\phi, \xi_{d,k})$, let us first explain the scaling parameter in  \eqref{def-local_OLD}.
	Let
	\[ \mathcal{N}_{d,k}(T):=\#\{s \in \C: L(s,\xi_{d,k})=0, 0 \leq \textrm{Re}(s) \leq 1, -T \leq \textrm{Im}(s) \leq T\}\]
	denote the number of zeros of $L(s,\xi_{d,k})$ on the critical strip up to height $T$.  Using the functional equation \eqref{FE} as  in \cite[Prop. 2.2]{Holm2023}, we find that in the limit as $k \rightarrow \infty$,
	\begin{align} \label{norm}
	\frac{1}{2T} \cN_{d,k}(T) \sim \frac{1}{2T} \frac{T \log (T k^{2} \N(\mathfrak{f}_{d,k}))}{\pi} 
	\sim \frac{ \log{(  k^2 N_{d,k})}}{2\pi} =  \frac{ \log{(  k M_{d,k})}}{\pi}
	\end{align}
	under the assumption of the \textit{Generalized Riemann Hypothesis} (GRH) (see also \cite[Thm. 5.8]{IwanKow} for the $T$-limit), and 
	where we use the notation
	\begin{align}\label{def_M}
	N_{d,k} := \N(\mathfrak{f}_{d,k}),  \quad \n := \N(\mathfrak{f}_{d,k})^{\frac12}.
	\end{align}
	
	To see that the analytic conductor of $L(s, \xi_{d,k})$ (as defined in \cite[p. 95]{IwanKow}) is asymptotic to~$k^2 \N(\ff_{d,k})$, we apply the duplication formula to \eqref{completed_L-function} and write
$$
\Lambda(s, \xi_{d,k}) =  \frac{2^\frac{k}{2}}{2 \sqrt{\pi}} \left( {4 \N(\ff_{d,k})} \right)^{s/2} \pi^{-s} \Gamma \Big( \frac{s + \frac{k}{2}}{2} \Big)  \Gamma \Big( \frac{s + \frac{k}{2} + 1}{2} \Big) L(s, \xi_{d,k}).
$$
The scaling parameter $\tfrac{\log(k^2\N(\ff_{d,k}))}{2\pi}$ in~\eqref{def-local_OLD} is thus chosen so that the average spacing between the scaled zeros is asymptotically equal to $1$, as $k$ grows large. Note that in much of the literature, the scaling parameter is often chosen to be independent of $k$.  We could, therefore, have instead chosen the relevant normalization factor to be, e.g. $\tfrac{\log(K^2d^2)}{2\pi}$.  While such an alternative might facilitate certain computations concerning the $k$-averaging, we do consider our choice to be technically more precise, particularly given our focus on lower order contributions.  Our choice moreover guarantees that the contribution of the $\Gamma$-factors to the main term, prior to taking the $k$-average (see  Lemma~\ref{Gamma fixed k}), is precisely $\phat(0)$.

For each character $\xi_{d, k}$, we rewrite the functional equation \eqref{FE} as 
$$L(s, \xi_{d,k}) = X_{d,k}(s) L(1-s, \xi_{d,k}),$$
where
\begin{align*}
X_{d,k}(s) := W(\xi_{d,k}) \left( {\N(\mathfrak{f}_{d,k})} \right)^{\frac12-s}\pi^{2s-1} \frac{\Gamma \left( 1-s + \frac{k}{2} \right)}{\Gamma \left( s + \frac{k}{2} \right)}.
\end{align*}
Upon taking logarithmic derivatives, we find that 
\begin{align} \label{log1}
\frac{L'}{L}(s, \xi_{d,k}) &= \frac{X_{d,k}'}{X_{d,k}} (s) - \frac{L'}{L}(1-s, \xi_{d,k}) \\ \label{log2}
\frac{X_{d,k}'}{X_{d,k}} (s) &=  -\log\N(\mathfrak{f}_{d,k}) + 2 \log \pi - \frac{\Gamma'}{\Gamma}\Big(1-s + \frac{k}{2}\Big) -  \frac{\Gamma'}{\Gamma}\Big(s + \frac{k}{2}\Big).
\end{align}

Since $\phi$ is an even Schwartz function, and $L(s, \xi_{d, k})$ has no trivial zeros for $\textrm{Re}(s) > -\frac12$, we see from \eqref{log1} that
\begin{align} \nonumber
\mathcal{D}(\phi, \xi_{d,k}) &=\frac{1}{2 \pi i} \left( \int_{(\frac54)} - \int_{(-\frac14)} \right)  \frac{L'}{L}(s,\xi_{d,k}) 
\phi \left(\frac{\log{( k^2 N_{d,k})}}{2\pi}\; \frac{s-\tfrac12}{i} \right) 
 \dd s\\  \nonumber
 &=\frac{1}{2 \pi i} \int_{(\frac54)} \left(  \frac{L'}{L}(s,\xi_{d,k})  - \frac{L'}{L}(1-s,\xi_{d,k}) \right)
 \phi \left(\frac{\log{( k^2 N_{d,k})}}{2\pi}\; \frac{s-\tfrac12}{i} \right) 
  \dd s\\  \nonumber
&=\frac{1}{2 \pi i} \int_{(\frac54)} \left( 2 \frac{L'}{L}(s,\xi_{d,k}) - \frac{X'_{d,k}}{X_{d,k}}(s) \right) 
\phi \left(\frac{\log{ (k^2 N_{d,k})}}{2\pi} \;
\frac{s-\tfrac12}{i} \right) 
 \dd s\\  \label{D-phi-k}
&=U_{L}(\phi,d,k) + U_{\Gamma}(\phi,d,k),
\end{align}
where
\begin{align} \label{def-U-L} U_{L}(\phi,d,k) := \frac{1}{\pi i} \int_{(\frac54)}\frac{L'}{L}(s,\xi_{d,k})
\phi \left(  \frac{\log{ (k^2 N_{d,k})}}{2\pi}
\; \frac{s-\tfrac12}{i} \right) 
  \dd s\end{align}
and
\begin{align} \label{def-U-G}
U_{\Gamma}(\phi,d,k) :=-\frac{1}{2 \pi i} \int_{(\frac54)} \frac{X'_{d,k}}{X_{d,k}}(s) 
\phi \left(\frac{\log{ (k^2 N_{d,k})}}{2\pi}
\; \frac{s-\tfrac12}{i} \right) 
 \dd s.\end{align}

\begin{lem}\label{Gamma fixed k}
	Let $k$ be a positive integer, $d$ be an odd square-free integer, and $\phi$ be an even Schwartz function such that $\widehat{\phi}$ is compactly supported. Then as $k \rightarrow \infty$,
\[U_{\Gamma}(\phi,d,k) =\widehat{\phi}(0)-\frac{\log{2\pi} }{\log{(k M_{d,k})}}\widehat{\phi}(0) +O\left(\frac{1}{k \log(k\n)}\right).\]
\end{lem}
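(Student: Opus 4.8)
The plan is to push the line of integration in~\eqref{def-U-G} to the critical line $\re s=\tfrac12$ and then insert the explicit shape~\eqref{log2} of $X'_{d,k}/X_{d,k}$. First I would justify the shift: since $\widehat{\phi}$ is compactly supported, Paley--Wiener gives that $\phi$ extends to an entire function with $\phi(x+iy)\ll_N(1+|x|)^{-N}e^{c|y|}$ for every $N$, while Stirling's formula shows $X'_{d,k}/X_{d,k}(s)\ll\log(|s|+k)+\log N_{d,k}$ on vertical lines; moreover $X'_{d,k}/X_{d,k}$ is holomorphic on the strip $\tfrac12\le\re s\le\tfrac54$, since the poles of $\Gamma'/\Gamma(1-s+\tfrac k2)$ and of $\Gamma'/\Gamma(s+\tfrac k2)$ lie at $s\in\{1+\tfrac k2,2+\tfrac k2,\dots\}$ and $s\in\{-\tfrac k2,-1-\tfrac k2,\dots\}$ respectively, all outside the strip for $k\ge1$. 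Hence the integrand decays rapidly in the vertical direction and no residue is collected; writing $s=\tfrac12+it$ and $L:=\log(k^2N_{d,k})=2\log(k\n)$ the shift yields
\[ U_\Gamma(\phi,d,k)=-\frac{1}{2\pi}\int_{\R}\Big(2\log\pi-\log N_{d,k}-\tfrac{\Gamma'}{\Gamma}\big(\tfrac{k+1}{2}-it\big)-\tfrac{\Gamma'}{\Gamma}\big(\tfrac{k+1}{2}+it\big)\Big)\,\phi\!\Big(\tfrac{Lt}{2\pi}\Big)\,\dd t. \]

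Next I would handle the three pieces separately. For the constant $2\log\pi-\log N_{d,k}$ the substitution $u=Lt/(2\pi)$ together with $\int_\R\phi=\widehat{\phi}(0)$ gives the contribution $\tfrac{\log N_{d,k}-2\log\pi}{L}\widehat{\phi}(0)=\tfrac{\log(\n/\pi)}{\log(k\n)}\widehat{\phi}(0)$, using $\log N_{d,k}=2\log\n$. For the two $\Gamma'/\Gamma$ terms, the change of variable $t\mapsto-t$ with the evenness of $\phi$ (and Schwarz reflection $\overline{\Gamma'/\Gamma(\bar z)}=\Gamma'/\Gamma(z)$, which also shows the sum is real) identifies their joint contribution with $\tfrac1\pi\int_\R\tfrac{\Gamma'}{\Gamma}\big(\tfrac{k+1}{2}+it\big)\phi\big(\tfrac{Lt}{2\pi}\big)\,\dd t=\tfrac2L\int_\R\tfrac{\Gamma'}{\Gamma}\big(\tfrac{k+1}{2}+\tfrac{2\pi iu}{L}\big)\phi(u)\,\dd u$, again after $u=Lt/(2\pi)$.

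For this last integral I would apply $\Gamma'/\Gamma(z)=\log z+O(1/|z|)$ (uniform for $\re z\ge\tfrac12$) together with $\log z=\log\tfrac{k+1}{2}+\log\!\big(1+\tfrac{4\pi iu}{(k+1)L}\big)$ for $z=\tfrac{k+1}{2}+\tfrac{2\pi iu}{L}$. The main term $\tfrac2L\log\tfrac{k+1}{2}\,\widehat{\phi}(0)$ equals $\tfrac{\log(k/2)}{\log(k\n)}\widehat{\phi}(0)+O\!\big(\tfrac{1}{k\log(k\n)}\big)$ since $\log\tfrac{k+1}{2}=\log\tfrac k2+O(1/k)$ and $L=2\log(k\n)$, while the two error contributions are $O(1/(kL))$: as $|z|\ge\tfrac{k+1}{2}$ the term $O(1/|z|)$ is $O(1/k)$ uniformly in $u$ and gains the prefactor $1/L$, and $\log\!\big(1+\tfrac{4\pi iu}{(k+1)L}\big)=O(|u|/(kL))$ for $|u|\le(k+1)L$ (again gaining $1/L$), whereas for $|u|>(k+1)L$ the rapid decay of $\phi$ dominates the $O(\log(k+|u|))$ growth of $\log z$. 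Adding the constant and the $\Gamma$ contributions and simplifying $\log(\n/\pi)+\log(k/2)=\log(k\n)-\log2\pi$ produces exactly $\widehat{\phi}(0)-\tfrac{\log2\pi}{\log(k\n)}\widehat{\phi}(0)+O\!\big(\tfrac{1}{k\log(k\n)}\big)$.

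The one genuinely delicate point I expect is the uniform control of the two Stirling error terms over the whole $u$-line, i.e. checking that the polar error and the tail of $\log\!\big(1+\tfrac{4\pi iu}{(k+1)L}\big)$ each contribute $O(1/(k\log(k\n)))$ rather than merely $O(1/\log(k\n))$; this is precisely where both the Schwartz hypothesis on $\phi$ and the extra factor $1/L$ produced by the change of variables are used. Everything else is routine contour shifting and bookkeeping of logarithms.
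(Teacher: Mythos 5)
Your argument is correct and follows essentially the same route as the paper's: shift the contour in~\eqref{def-U-G} to $\re s=\tfrac12$ (where the $\Gamma'/\Gamma$ factors are still holomorphic since their arguments have real part $\tfrac{k+1}{2}>0$), change variables to pull out the factor $1/\log(k^{2}N_{d,k})$, and invoke Stirling for $\Gamma'/\Gamma$ near $\tfrac{k+1}{2}$ to extract the term $\tfrac{2}{L}\log\tfrac{k+1}{2}\,\widehat{\phi}(0)$ plus an $O(1/(kL))$ error. The only difference is presentational: the paper cites \cite[Lemma~5.1]{Wa2021} for the Stirling step and absorbs the dependence on $\tau$ into the $O(1/k)$ with less comment, whereas you make the decomposition $\log z=\log\tfrac{k+1}{2}+\log\!\big(1+\tfrac{4\pi iu}{(k+1)L}\big)$ and the split of the $u$-range explicit. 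Your bookkeeping of $\log(\n/\pi)+\log(k/2)=\log(k\n)-\log 2\pi$ reproduces the paper's constant $-\log 2\pi$ exactly.
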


\begin{proof}
Upon applying the change of variables $r = s-\tfrac12$ to \eqref{def-U-G} and using \eqref{log2}, we have
\begin{multline*}
U_{\Gamma}(\phi,d,k) 
=~ \frac{1}{2 \pi i} \int_{(\frac34)} \bigg(\log N_{d,k} - 2 \log \pi + \frac{\Gamma'}{\Gamma}\Big(\frac{k+1}{2}-r\Big) +  \frac{\Gamma'}{\Gamma}\Big(\frac{k+1}{2}+r\Big)\bigg) \\ 
\times
\phi \left( \frac{\log{ (k^2 N_{d,k})}}{2\pi i} \; r \right) \, \dd r.
\end{multline*}
Noting that $\Gamma'(s)/\Gamma(s)$ is holomorphic on the half-plane Re$(s)> 0$, we shift the contour to the imaginary line  Re$(r) = 0$, and upon applying the change of variables 
$$\tau := \frac{\log{ (k^2 N_{d,k})}}{2\pi i}\; r,$$
we find that
\begin{multline*} U_{\Gamma}(\phi,d,k) = \frac{1}{\log{(k^2 N_{d,k})}} \int_{\R} \Big(\log N_{d,k} - 2 \log \pi + \frac{\Gamma'}{\Gamma}\Big(\frac{k+1}{2}-\frac{ 2\pi i \tau}{\log{(k^2N_{d,k})}} 
\Big)  \\ +   \frac{\Gamma'}{\Gamma} \Big( \frac{k+1}{2}+\frac{ 2\pi i \tau}{\log{(k^2N_{d,k})}} \Big) \Big) \phi (\tau ) \, \dd\tau.\end{multline*}
As in the proof of \cite[Lem.~5.1]{Wa2021}, it follows from Stirling's approximation that
\begin{align*}
U_{\Gamma}(\phi,d,k)&= \frac{1}{\log{(k^2 N_{d,k})}}
\Big(( \log N_{d,k} - 2 \log \pi)\widehat{\phi}(0)+2
\int_{\R}\phi(\tau)\Big(\log \Big|\tfrac{k+1}{2}+\tfrac{ 2\pi i \tau}{\log{(k^2N_{d,k})}}\Big| +O\left(\tfrac{1}{k}
\right)\Big)\dd\tau\Big)\\
&= \frac{\widehat{\phi}(0)}{\log{(k^2 N_{d,k})}} \Big( \log N_{d,k} - \log \pi^2 + 2 \log \big(\tfrac{k+1}{2}\big)\Big)\\
&\qquad \qquad \qquad +O\bigg(\frac{1}{\log{(k^2 N_{d,k})}}\int_{\R}|\phi(\tau)|\left(\log\left|1+\tfrac{2}{k+1}\tfrac{2\pi i \tau}{\log{(k^2N_{d,k})}}\right|+\tfrac {1}{k}\right)\dd\tau \bigg).
\end{align*}
Applying the inequality $\log |1+z| \leq |z|$, we note that by the rapid decay of $\phi$,
\begin{align*}
\int_{\R}|\phi(\tau)|\left(\log\left|1+\tfrac{2}{k+1}\tfrac{2\pi i \tau}{\log{(k^2N_{d,k})}}\right|\right)\dd\tau \ll \tfrac{1}{k\log{(k^2N_{d,k})}}\int_{\R}|\phi(\tau)|\left|\tau \right|\dd\tau \ll  \tfrac{1}{k\log{(k^2N_{d,k})}}.
\end{align*}
It follows that
\begin{align*}
U_{\Gamma}(\phi,d,k)&= \phat(0) -  \frac{ \log{ ( 4 \pi^2) }}{\log{(k^2 N_{d,k})}}\phat(0)
 +O\bigg(\frac {1}{k \log{(k^2 N_{d,k})} }\bigg),
\end{align*}
as desired.
\end{proof}

We now  compute the contribution of $U_{L}(\phi,d,k)$. Separating according to the splitting properties of $(p) \subseteq \Z[i]$, we write 
\begin{align} \label{log-derivative}
-  \frac{L'(s, \xi_{d,k})}{L(s, \xi_{d,k})} = \sum_{\substack{p \equiv 1 \bmod 4\ \\ (p) = \p\overline{\p}\\n \geq 1}} \frac{ \big(\xi_{d,k}^{n}(\p) + \overline{\xi}_{d,k}^{n}(\p) \big) \log{p}}{p^{ns}}
+ \sum_{\substack{p \equiv 3 \bmod 4 \\n \geq 1}} \frac{2\xi_{d,k}^{n}((p))\log{p}}{p^{2 n s}}  
+ \sum_{n \geq 1} \frac{\xi_{d,k}^{n}((1+i)) \log{2}}{2^{ns}},
\end{align}
and 
\begin{equation}\label{UL_sum}
U_{L}(\phi,d,k) = U_{\text{split}}(\phi, d,k)+U_{\text{inert}}(\phi,d, k)+U_{\text{ram}}(\phi, k)
\end{equation}
where
 \begin{align} \label{Usplit}
U_{\text{split}}(\phi,d,k) &:= - \frac{1}{ \pi i} \int_{(\frac54)} \sum_{\substack{p \equiv 1 \bmod 4\\ (p) = \p \overline{\p} \\ 
 n \geq 1}} \frac{ \big(\xi_{d,k}^{n}(\p) + \overline{\xi}_{d,k}^{n}(\p) \big)\log{p}}{p^{ns}}
 \phi \left( \frac{(s-\frac12) \log(k^2 N_{d,k})}{2\pi i} \right) \, \dd s,
\end{align}
\begin{align}\label{Uinert}
 U_{\text{inert}}(\phi,d, k) &:= - \frac{1}{ \pi i} \int_{(\frac54)} \sum_{\substack{p \equiv 3 \bmod4\\ (p)\nmid \mathfrak{f}_{d,k}\\ n \geq 1}} \frac{2(-1)^{kn}\log{p} }{p^{2 n s}}  \phi \left( \frac{(s-\frac12) \log(k^2 N_{d,k})}{2\pi i} \right) \dd s,
\end{align}
\begin{equation}\label{Uram}
U_{\text{ram}}(\phi,k):=- \frac{1}{ \pi i} \int_{(\frac54)}\sum_{n \geq 1} \frac{\xi_{d,k}^{n}((1+i)) \log{2}}{2^{ns}}\phi \left(\frac{(s-\frac12) \log(k^2 N_{d,k})}{2\pi i}\right) \dd s.
\end{equation}
Note that by \eqref{conductor_2}, one has
\[\xi_{d,k}((1+i)) =
\begin{cases}
	0 & \textnormal{ if } k \not \equiv 0 \bmod 4\\
	(-1)^{\frac{k}{4}} & \textnormal{ if } k \equiv 0 \bmod 4,
\end{cases}\]
so that $U_{\text{ram}}(\phi,k)$ is indeed independent of $d$. 

\begin{lem}  \label{U-split-and-inert} Let $k$ be a positive integer, $d$ be an odd square-free integer, and $\phi$ be an even Schwartz function such that $\widehat{\phi}$ is compactly supported.
Let $U_{\textnormal{split}}(\phi,d,k)$, $U_{\textnormal{inert}}(\phi,d, k)$, and $U_{\textnormal{ram}}(\phi,k)$ be given by \eqref{Usplit}, \eqref{Uinert} and \eqref{Uram}, respectively.   Then
\begin{align} \label{U-split}
U_{\textnormal{split}}(\phi,d,k) &= - \frac{1}{\log ( k \n)} \sum_{\substack{p \equiv 1 \bmod4\\
(p) = \p \overline{\p}\\n \geq 1}} \frac{ \big(\xi_{d,k}^{n}(\p) + \overline{\xi}_{d,k}^{n}(\p) \big)\log{p}}{p^{\frac{n}{2}}}
\widehat\phi \left( \frac{n \log{p}}{ 2\log (k \n)} \right)\\ \label{U-inert}
U_{\textnormal{inert}}(\phi,d,k) &= - \frac{1}{\log (k \n)} \sum_{\substack{p \equiv 3\bmod4\\  (p) \nmid \mathfrak{f}_{d,k}\\ n \geq 1}} \frac{2(-1)^{kn}\log{p}}{p^n} \; \widehat{\phi}  \left( \frac{n  \log p}{\log (k \n)} \right)\\
\label{U-ram} U_{\textnormal{ram}}(\phi,k)&=
-\frac{1}{\log (k \n)}\sum_{n \geq 1} \frac{  (-1)^{\frac{kn}{4}} \log 2}{2^{\frac{n}{2}}}\widehat{\phi}\Bigg(\frac{n \log 2}{ 2\log (k \n)}\Bigg) \quad \text{ if } k \equiv 0 \bmod 4,
\end{align}
and $U_{\textnormal{ram}}(\phi,k) = 0 $ if $k \not\equiv 0 \bmod 4$.
\end{lem}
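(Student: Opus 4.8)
The plan is to handle \eqref{Usplit}, \eqref{Uinert} and \eqref{Uram} in one stroke: first interchange summation and integration, then evaluate each resulting one-dimensional integral by shifting the contour to the critical line and applying Fourier inversion. Throughout, write $Q:=k^{2}N_{d,k}$, so that $\log Q=2\log(k\n)$ since $\n=M_{d,k}=N_{d,k}^{1/2}$.

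\emph{Step 1: interchanging sum and integral.} On the line $\re(s)=\tfrac54$ the Dirichlet series $-\tfrac{L'}{L}(s,\xi_{d,k})=\sum_{\mathfrak a}\Lambda(\mathfrak a)\xi_{d,k}(\mathfrak a)\N(\mathfrak a)^{-s}$, split according to the splitting type of $p$ as in \eqref{log-derivative}, converges absolutely, being dominated by $-\tfrac{\zeta_{\Q(i)}'}{\zeta_{\Q(i)}}(\tfrac54)<\infty$ because $|\xi_{d,k}(\mathfrak a)|\le1$; and $\phi\bigl(\tfrac{(s-1/2)\log Q}{2\pi i}\bigr)$ decays faster than any polynomial along that line since $\phi$ is Schwartz. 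Hence Fubini applies and each of \eqref{Usplit}, \eqref{Uinert}, \eqref{Uram} becomes a sum over Euler-factor terms of integrals of the shape
\[ I(c,b):=-\frac{1}{\pi i}\int_{(5/4)}\frac{c}{b^{s}}\,\phi\!\left(\frac{(s-\tfrac12)\log Q}{2\pi i}\right)\dd s, \]
with $b=p^{n}$ and $c=\bigl(\xi_{d,k}^{n}(\p)+\overline{\xi}_{d,k}^{n}(\p)\bigr)\log p$ for \eqref{Usplit}; with $b=p^{2n}$ and $c=2(-1)^{kn}\log p$ for \eqref{Uinert}; and with $b=2^{n}$ and $c=\xi_{d,k}^{n}((1+i))\log 2$ for \eqref{Uram}.

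\emph{Step 2: evaluating $I(c,b)$.} The integrand is entire in $s$ with no poles, and, $\widehat\phi$ being compactly supported, $\phi$ extends to an entire function which decays rapidly as $|\re(s)|\to\infty$ along any fixed horizontal line (Paley--Wiener, integrating by parts against $\widehat\phi\in C_c^\infty$). Therefore I may shift the contour from $\re(s)=\tfrac54$ to $\re(s)=\tfrac12$, the horizontal connecting segments contributing nothing in the limit. On $s=\tfrac12+it$ one has $\phi\bigl(\tfrac{(s-1/2)\log Q}{2\pi i}\bigr)=\phi\bigl(\tfrac{t\log Q}{2\pi}\bigr)$ and $b^{-s}=b^{-1/2}e^{-it\log b}$; substituting $u=\tfrac{t\log Q}{2\pi}$ then yields
\[ I(c,b)=-\frac{2c}{b^{1/2}\log Q}\int_{\R}\phi(u)\,e^{-2\pi i u\,\frac{\log b}{\log Q}}\,\dd u=-\frac{2c}{b^{1/2}\log Q}\,\phat\!\left(\frac{\log b}{\log Q}\right). \]

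\emph{Step 3: assembling the formulas.} Inserting the three choices of $(c,b)$ and using $\log Q=2\log(k\n)$ gives \eqref{U-split} directly; for \eqref{Uinert} one gets $b^{1/2}=p^{n}$ and $\tfrac{\log b}{\log Q}=\tfrac{2n\log p}{2\log(k\n)}=\tfrac{n\log p}{\log(k\n)}$, which is \eqref{U-inert}; and for \eqref{Uram}, \eqref{conductor_2} gives $\xi_{d,k}((1+i))=0$ unless $k\equiv0\bmod4$, in which case $\xi_{d,k}^{n}((1+i))=(-1)^{kn/4}$, producing \eqref{U-ram} when $k\equiv0\bmod4$ and $U_{\textnormal{ram}}(\phi,k)=0$ otherwise. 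Since $\widehat\phi$ has compact support, only finitely many $(p,n)$ contribute to each sum, so reassembling the series is harmless. The only points requiring care are Fubini in Step 1 and the contour shift in Step 2; both are routine given absolute convergence of $-L'/L$ for $\re(s)>1$ and the Paley--Wiener decay of $\phi$, so I anticipate no genuine obstacle — the lemma is precisely the explicit formula applied term by term.
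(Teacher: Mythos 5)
Your proof is correct and follows essentially the same route as the paper: justify the interchange of sum and integral by absolute convergence and decay of $\phi$, shift the contour from $\mathrm{Re}(s)=\tfrac54$ to $\mathrm{Re}(s)=\tfrac12$, and recognize the resulting integral as a Fourier transform after rescaling. The paper carries this out explicitly for the inert sum and then states that the split and ramified cases follow similarly, whereas you have packaged all three cases into a single generic integral $I(c,b)$, which is a clean way to present the identical computation. One small point of precision: the decay of $\phi$ along $\mathrm{Re}(s)=\tfrac54$ (needed already for Fubini in Step 1) is the decay of the Paley--Wiener extension of $\phi$ along a horizontal line $\mathrm{Im}(z)=\text{const}\neq0$, not merely Schwartz decay on $\R$; you invoke the correct Paley--Wiener estimate in Step 2, and it is the same fact that is needed in Step 1.
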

\begin{proof}
For the inert primes, we compute 
\begin{align*}  - U_{\text{inert}}(\phi, d,k) 
&= \frac{1}{ \pi i} \int_{(\frac54)}  \sum_{\substack{p \equiv 3 \bmod4\\(p) \nmid \mathfrak{f}_{d,k}\\n \geq 1}} \frac{2(-1)^{kn}\log{p} }{p^{2 n s}}  
 \phi \left(\frac{(s-\frac12) \log(k^2 N_{d,k})}{2\pi i}\right) \dd s\\ &=
 \sum_{\substack{p \equiv 3 \bmod 4\\(p) \nmid \mathfrak{f}_{d,k}\\n \geq 1}} 
 \frac{2(-1)^{kn}\log{p}}{ \pi i} \int_{(\frac12)}  {p^{-2 n s}}   \phi \left( \frac{(s-\frac12) \log(k^2 N_{d,k})}{2\pi i} \right) \dd s.
\end{align*}
Note that switching the order of summation and integration is justified upon noting that at Re$(s)=\frac54,$
\begin{align*}
\int_{(\frac54)}\sum_{\substack{p \equiv 3 \bmod4\\(p) \nmid \mathfrak{f}_{d,k}\\n \geq 1}}& \bigg \vert \frac{2(-1)^{kn}\log{p}}{p^{2 n s}}  
 \phi \left( \frac{(s-\frac12) \log(k^2 N_{d,k})}{2\pi i} \right)\bigg \vert \dd s\\
&  \ll \sum_{m} \frac{\log{m}}{m^{\frac{5}{2}}}  
\int_{(\frac54)} \bigg \vert  \phi \left( \frac{(s-\frac12) \log(k^2 N_{d,k})}{2\pi i} \right)\bigg \vert \dd s\ll \infty,\end{align*}
since $\phi$ is a Schwartz function (e.g. \cite[Lem.~3.7]{Wa2021}). The shift from $\textnormal{Re}(s) = \frac54$ to $\textnormal{Re}(s) = \frac12$ is then further justified upon noting that $s \mapsto p^{-2ns}\phi \left(\frac{(s-\frac12) \log(k^2 N_{d,k})}{2\pi i} \right)$ is holomorphic. 
Applying the change of variables $t = (s-\frac12)  \frac{\log(k^2 N_{d,k})}{2\pi i}$, and noting that $p^{-2 n s} = p^{-2n (s-\frac12)} p^{-n}$, we obtain
\begin{multline*} \frac{1}{ \pi i} \int_{(\frac12)}  {p^{-2 n s}}   \phi \left(  \frac{(s-\frac12) \log(k^2 N_{d,k})}{2\pi i} \right) \, \dd s
	\\
= \frac{2}{p^{n} \log (k^2 N_{d,k})} \int_{-\infty}^\infty \phi(t)  e^{- 2\pi i \frac{2n  \log p}{\log (k^2N_{d,k})}   t} \dd t =\frac{1}{p^{n} \log (k \n)} \widehat{\phi}\left(\frac{n  \log p}{\log (k \n)}  \right).
\end{multline*}
The proofs for $U_{\textnormal{split}}(\phi,d,k)$ and $U_{\textnormal{ram}}(\phi,k)$ follow similarly.
\end{proof}

We now compute $U_{\textnormal{ram}}(\phi,k)$, including lower-order terms in descending powers of $\log(k\n)$.

\begin{lem}\label{ram_computation} Let $k$ be a positive integer, $d$ be an odd square-free integer, and $\phi$ be an even Schwartz function such that $\widehat{\phi}$ is compactly supported.
Then for any $J \in \mathbb{N}$ and $k \rightarrow \infty$, we have
\[U_{\textnormal{ram}}(\phi,k) = \sum_{\substack{j=0 \\ j \textnormal{ even}}}^{J-1}\frac{c_{j,\textnormal{ram}}(k)\; \widehat{\phi}^{(j)}(0)}{(\log( k \n))^{j+1}}+ O_{J}\left((\log (k\n))^{-J-1}\right),\]
where for $j\geq 0$, 
\begin{equation}\label{ram_lower_order}
c_{j,\textnormal{ram}}(k) :=
\begin{cases}
-\frac{2}{j!}\left(\frac{\log 2}{2}\right)^{j+1}\textnormal{Li}_{-j}\left(\frac{(-1)^{\frac{k}{4}}}{\sqrt{2}}\right) & \textnormal{ if } k \equiv 0 \bmod 4\\
0 &  \textnormal{ if } k \not \equiv 0  \bmod 4,
\end{cases}
\end{equation}
and
\begin{align} 
\label{def-Li}  \textnormal{Li}_{-j}(z):=\sum_{n=1}^{\infty}n^{j}z^{n}.\end{align}
\end{lem}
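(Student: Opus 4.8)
The plan is to start from the formula for $U_{\textnormal{ram}}(\phi,k)$ given in Lemma~\ref{U-split-and-inert}, namely
\[
U_{\textnormal{ram}}(\phi,k)=-\frac{1}{\log(k\n)}\sum_{n\geq 1}\frac{(-1)^{\frac{kn}{4}}\log 2}{2^{n/2}}\widehat\phi\!\left(\frac{n\log 2}{2\log(k\n)}\right)
\]
when $k\equiv 0\bmod 4$ (and $U_{\textnormal{ram}}=0$ otherwise, which immediately gives the claimed vanishing of $c_{j,\textnormal{ram}}(k)$ in that case). For $k\equiv 0\bmod 4$, write $k/4$ modulo $2$ to interpret $(-1)^{kn/4}=\big((-1)^{k/4}\big)^n$, and set $w:=(-1)^{k/4}/\sqrt2$, so that the summand becomes $\log 2\cdot w^n\cdot \widehat\phi\!\big(\tfrac{n\log 2}{2\log(k\n)}\big)$ up to the overall sign and the prefactor $1/\log(k\n)$.

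Next I would Taylor-expand $\widehat\phi$ about $0$: since $\widehat\phi$ is smooth with compact support (as $\phi$ is Schwartz), for any $J$ we have
\[
\widehat\phi\!\left(\frac{n\log 2}{2\log(k\n)}\right)=\sum_{j=0}^{J-1}\frac{\widehat\phi^{(j)}(0)}{j!}\left(\frac{n\log 2}{2\log(k\n)}\right)^{j}+O_J\!\left(\left(\frac{n\log 2}{2\log(k\n)}\right)^{J}\right),
\]
valid uniformly in $n$ because $\widehat\phi^{(J)}$ is bounded. Substituting this and interchanging the finite sum over $j$ with the sum over $n$, the coefficient of $\widehat\phi^{(j)}(0)$ involves
\[
\sum_{n\geq 1} n^{j} w^{n}=\textnormal{Li}_{-j}(w),
\]
which converges absolutely since $|w|=1/\sqrt2<1$; this identifies the main terms and produces exactly $c_{j,\textnormal{ram}}(k)=-\tfrac{2}{j!}\big(\tfrac{\log 2}{2}\big)^{j+1}\textnormal{Li}_{-j}\big((-1)^{k/4}/\sqrt2\big)$ after collecting the factor $1/\log(k\n)$ from the front and the factor $\big(\tfrac{\log 2}{2\log(k\n)}\big)^{j}$ from the expansion, i.e. an overall $(\log(k\n))^{-(j+1)}$. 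The terms with $j$ odd can be dropped: one should check that they actually need to be kept in general (the statement only sums over even $j$), so I would instead argue that the odd-$j$ terms are not separately claimed to vanish but are simply absorbed — rereading the statement, it sums $j$ even only, so I would verify that $\widehat\phi^{(j)}(0)=0$ for $j$ odd because $\phi$ is even, hence $\widehat\phi$ is even, hence all odd derivatives of $\widehat\phi$ vanish at $0$. That disposes of the odd terms cleanly.

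Finally I would bound the error. The tail from the Taylor remainder contributes
\[
\ll\frac{1}{\log(k\n)}\sum_{n\geq 1}\frac{\log 2}{2^{n/2}}\left(\frac{n\log 2}{2\log(k\n)}\right)^{J}\ll\frac{1}{(\log(k\n))^{J+1}}\sum_{n\geq 1}\frac{n^{J}}{2^{n/2}}\ll_J (\log(k\n))^{-J-1},
\]
the last sum being a convergent constant depending only on $J$. Combining gives the stated asymptotic. The only mild subtlety — and the "hard part", though it is routine here — is making the interchange of summations and the uniformity of the Taylor remainder rigorous; both are immediate from absolute convergence of $\sum n^{J} 2^{-n/2}$ and boundedness of $\widehat\phi^{(J)}$, so there is no real obstacle. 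I would also note that since $\widehat\phi$ has compact support the sum over $n$ is in fact finite for each fixed $k$, which makes all interchanges trivially legitimate; the clean way to present it is to keep the sum infinite and use absolute convergence, as above.
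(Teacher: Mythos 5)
Your proposal is correct and follows essentially the same route as the paper: expand $\widehat{\phi}$ in a Taylor series at $0$, swap the (finite) $j$-sum with the $n$-sum, recognize $\sum_n n^j w^n=\mathrm{Li}_{-j}(w)$ with $w=(-1)^{k/4}/\sqrt{2}$, and kill the odd-$j$ terms via $\widehat{\phi}^{(j)}(0)=0$ for $\phi$ even. The only difference is cosmetic: the paper first truncates the $n$-sum at $N\asymp\log\log(k\n)$ and later completes it, whereas you apply the Lagrange remainder uniformly in $n$ (legitimate since $\widehat{\phi}^{(J)}$ is globally bounded), which slightly streamlines the same argument.
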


\begin{proof} 
	Suppose $k \equiv 0 \bmod 4$.
	Then by \eqref{U-ram}, we have
\begin{multline*}
U_{\textnormal{ram}}(\phi,k)=-\frac{1}{\log(k \n)}\sum_{n = 1}^{N} \frac{(-1)^{\frac{kn}{4}}\log{2} }{2^{\frac{n}{2}}}\widehat{\phi}\left(\frac{n \log 2}{2 \log(k \n)}\right)
\\-\frac{1}{\log(k \n)}\sum_{n \geq N + 1} \frac{(-1)^{\frac{kn}{4}}\log{2} }{2^{\frac{n}{2}}}\widehat{\phi}\left(\frac{n \log 2}{2 \log(k \n)}\right),
\end{multline*}
for any $N \in \mathbb{N}$. Choosing $N = 2J \log \log(k \n)/\log 2$, we then bound 
\begin{align*}
\frac{1}{\log(k \n)}\sum_{n \geq N + 1} \frac{(-1)^{\frac{kn}{4}}\log{2} }{2^{\frac{n}{2}}}\widehat{\phi}\left(\frac{n \log 2}{2 \log(k \n)}\right)\ll \frac{2^{-\frac{N}{2}}}{\log(k \n)} = \frac{2^{-\frac{J \log \log(k \n)}{ \log 2}}}{\log(k \n)} = (\log(k \n))^{-J-1}.
\end{align*}
Moreover, by Taylor expansion one has
\begin{equation}\label{taylor_expansion}
\widehat{\phi}\left(\frac{n \log 2}{2 \log(k \n)}\right) = \sum_{j=0}^{J-1}\frac{\widehat{\phi}^{(j)}(0)}{j!}\left(\frac{n \log 2}{2 \log(k \n)}\right)^{j}+O_{J}\left(\left(\frac{n \log 2}{2 \log(k \n)}\right)^{J}\right),
\end{equation}
so that
\begin{align*}
\sum_{n = 1}^{N} \frac{(-1)^{\frac{kn}{4}}\log{2} }{2^{\frac{n}{2}}}&\widehat{\phi}\left(\frac{n \log 2}{2 \log(k \n)}\right)\\
& = \sum_{n = 1}^{N} \frac{(-1)^{\frac{kn}{4}}\log{2} }{2^{\frac{n}{2}}}\left(\sum_{j=0}^{J-1}\frac{\widehat{\phi}^{(j)}(0)}{j!}\left(\frac{n \log 2}{2 \log(k \n)}\right)^{j}+O_{J}\left(\left(\frac{n \log 2}{2 \log(k \n)}\right)^{J}\right)\right).
\end{align*}
Since
\[\sum_{n = 1}^{N} \frac{\log{2} }{2^{\frac{n}{2}}}\left(n \log 2\right)^{J} \ll_{J} \sum_{n = 1}^{\infty} \frac{n^{J} }{2^{\frac{n}{2}}}< \infty, \]
we find that
\begin{align*}
U_{\textnormal{ram}}(\phi,k) 
&= -2\sum_{j=0}^{J-1}\frac{\widehat{\phi}^{(j)}(0)}{j!}\left(\frac{\log 2}{2 \log(k \n)}\right)^{j+1}\sum_{n = 1}^{N} \frac{(-1)^{\frac{kn}{4}}n^{j}}{2^{\frac{n}{2}}} + O_{J}\left((\log(k \n))^{-J-1}\right).
\end{align*}
Since $n \mapsto \frac{n^{j}}{2^{\frac{n}2}}$ is decreasing in the range $n >\frac{2j}{\log 2}$, and moreover $N \geq \tfrac{2J}{\log 2}$ for sufficiently large $k$, one has that
\begin{align*}
\Big\lvert \sum_{n = N+1}^{\infty} \frac{(-1)^{\frac{kn}{4}}n^{j}}{2^{\frac{n}{2}}} \Big\rvert \leq \sum_{n = N+1}^{\infty} \frac{n^{j}}{2^{\frac{n}{2}}} \leq \int_{N}^{\infty}\frac{x^{j}}{2^{\frac{x}{2}}}\dd x = \int_{N}^{\infty}x^{j}e^{-{\frac{x}{2}\log 2}}\dd x.
\end{align*}
Upon repeated application of integration by parts, we find that
\begin{align*}
\int_{N}^{\infty}x^{j}e^{-{\frac{x}{2}\log 2}}\dd x &=  \frac{N^{j}}{2^{N/2}}\frac{2}{\log 2}+\frac{2j}{\log 2}\int_{N}^{\infty}x^{j-1}e^{-x\frac{\log 2}{2}}\dd x = O_j\left(\frac{N^{j}}{2^{N/2}}\right) = O_{J}\left(\frac{(\log \log(k \n))^{j}}{(\log(k \n))^{J}}\right).
\end{align*}
We thus may write
\begin{align*}
	U_{\textnormal{ram}}(\phi,k) 
	=& -2\sum_{j=0}^{J-1}\frac{\widehat{\phi}^{(j)}(0)}{j!}\left(\frac{\log 2}{2 \log(k \n)}\right)^{j+1}\left(\sum_{n = 1}^{\infty} \frac{(-1)^{\frac{kn}{4}}n^{j}}{2^{\frac{n}{2}}}  + O_{J}\left(\frac{(\log \log(k \n))^{j}}{(\log(k \n))^{J}}\right) \right)
	\\
	& + O_{J}\left((\log(k \n))^{-J-1}\right) \\
	=& -2\sum_{j=0}^{J-1}\frac{\widehat{\phi}^{(j)}(0)}{j!}\left(\frac{\log 2}{2 \log(k \n)}\right)^{j+1}\sum_{n = 1}^{\infty} \frac{(-1)^{\frac{kn}{4}}n^{j}}{2^{\frac{n}{2}}} + O_{J}\left((\log(k \n))^{-J-1}\right),
\end{align*}
as desired, where we note that since $\phi$ is even, we have~$\phat^{(j)}(0) = 0$ for all odd $j$.
\end{proof}

Next, we write
\[U_{\textnormal{inert}}(\phi,d,k)=U_{\textnormal{inert}}(\phi,k)+U_{\textnormal{inert},d}(\phi,k),\]
where
\begin{equation}\label{Def Uinert indep d}
	U_{\textnormal{inert}}(\phi,k):= - \frac{1}{\log{(k\n)}} \sum_{\substack{p \equiv 3 \bmod4\\ n \geq 1}} \frac{2(-1)^{kn}\log{p}}{p^n} \; \widehat{\phi}  \left( \frac{n \log{p}}{\log{(k\n)}} \right)
\end{equation}
and
\[U_{\textnormal{inert},d}(\phi,k):= \frac{1}{\log{(k\n)}} \sum_{\substack{p \equiv 3 \mod4\\  (p) \mid \mathfrak{f}_{d,k}\\ n \geq 1}} \frac{2(-1)^{kn}\log{p}}{p^n} \; \widehat{\phi}  \left( \frac{n \log{p}}{\log{(k\n)}} \right).
\]
We proceed to compute $U_{\textnormal{inert},d}(\phi,k)$ including the lower-order terms in descending powers of $\log( k\n)$.

\begin{lem}\label{inert_D_computation} Let $k$ be a positive integer, $d$ be an odd square-free integer, and $\phi$ be an even Schwartz function such that $\widehat{\phi}$ is compactly supported.
 Then for any $J \in \mathbb{N}$ and $k \rightarrow \infty$, we have
\[U_{\textnormal{inert},d}(\phi,k) = \sum_{\substack{j=0 \\ j \textnormal{ even}}}^{J-1}\frac{c_{j,\textnormal{inert},d}(k)\; \widehat{\phi}^{(j)}(0)}{(\log(k\n))^{j+1}}+ O_{J}\left(\frac{\log |d|}{(\log(k\n))^{J+1}}\right),\]
where for $j\geq 0$, the constants $c_{j,\textnormal{inert},d}(k)$ are 
\begin{equation}\label{c_inert_D}
c_{j,\textnormal{inert},d}(k) :=
\begin{cases}
\frac{2}{j!}\sum_{\substack{p \equiv 3 \bmod 4 \\ p \mid d}}(\log p)^{j+1}\textnormal{Li}_{-j}\left(\frac{(-1)^{k}}{p}\right) & \textnormal{ if } k \not \equiv 0 \bmod 4\\
0 &  \textnormal{ if } k \equiv 0 \bmod 4,
\end{cases}
\end{equation}
and $\textnormal{Li}_{-j}(z)$ is defined by \eqref{def-Li}.
\end{lem}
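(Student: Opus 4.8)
The plan is to start from the defining expression for $U_{\textnormal{inert},d}(\phi,k)$ displayed just before the statement, and to argue almost exactly as in the proof of Lemma~\ref{ram_computation}. First, when $k \equiv 0 \bmod 4$ the conductor is $\mathfrak{f}_{d,k} = (1)$ by \eqref{conductor}, so the defining sum is empty and both sides vanish; thus one may assume $k \not\equiv 0 \bmod 4$. In that case $\mathfrak{f}_{d,k}$ equals $\mathfrak{f}_d$ or $(2d)$, and since a rational prime $p \equiv 3 \bmod 4$ is inert in $\Z[i]$ and coprime to $(1+i)$, the condition $(p) \mid \mathfrak{f}_{d,k}$ is equivalent to $p \mid d$. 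Hence
\[
U_{\textnormal{inert},d}(\phi,k) = \frac{2}{\log(k\n)} \sum_{\substack{p \equiv 3 \bmod 4\\ p \mid d}} \sum_{n \geq 1} \frac{(-1)^{kn}\log p}{p^n}\, \widehat\phi\!\left(\frac{n\log p}{\log(k\n)}\right),
\]
a sum over at most $\omega(d) \ll \log|d|$ primes.

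Next I would fix $A$ with $\mathrm{supp}(\widehat\phi) \subset [-A,A]$, so that each inner sum is really a sum over $n \leq N_p := A\log(k\n)/\log p$, and apply Taylor's theorem with Lagrange remainder to $\widehat\phi$ (which is $C^\infty$ with all derivatives bounded) about the origin, giving
\[
\widehat\phi\!\left(\frac{n\log p}{\log(k\n)}\right) = \sum_{j=0}^{J-1} \frac{\widehat\phi^{(j)}(0)}{j!}\left(\frac{n\log p}{\log(k\n)}\right)^{j} + O_J\!\left(\left(\frac{n\log p}{\log(k\n)}\right)^{J}\right),
\]
uniformly for $n \leq N_p$. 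Extending each resulting sum over $n$ back to all $n \geq 1$ introduces only a tail $\ll_{J,A} (\log(k\n))^{J}(k\n)^{-A}$, which is negligible. Summing the Taylor monomials over $n$ produces the polylogarithm via $\sum_{n\geq 1}(-1)^{kn}n^{j}p^{-n} = \mathrm{Li}_{-j}((-1)^k/p)$ (since $(-1)^{kn} = ((-1)^k)^n$ and by \eqref{def-Li}); collecting the $j+1$ factors of $\log p$ (one from the Euler product, $j$ from the monomial) together with the powers $(\log(k\n))^{-(j+1)}$ reproduces exactly the constants $c_{j,\textnormal{inert},d}(k)$ of \eqref{c_inert_D}, while the odd-$j$ terms vanish because $\phi$ even forces $\widehat\phi^{(j)}(0) = 0$.

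It remains to bound the accumulated remainder. For each $p$, the $j = J$ Taylor error contributes, after multiplication by the outer factor,
\[
\frac{2}{\log(k\n)}\cdot\frac{(\log p)^{J+1}}{(\log(k\n))^{J}}\sum_{n\geq 1}\frac{n^{J}}{p^{n}} = \frac{2(\log p)^{J+1}\,\mathrm{Li}_{-J}(1/p)}{(\log(k\n))^{J+1}},
\]
and since $\mathrm{Li}_{-J}(1/p) \asymp_J 1/p$, the map $p \mapsto (\log p)^{J+1}\mathrm{Li}_{-J}(1/p)$ is bounded on $[3,\infty)$ by a constant depending only on $J$; summing over the $\leq \omega(d) \ll \log|d|$ primes $p \mid d$ gives the claimed error $O_J\!\big(\log|d|/(\log(k\n))^{J+1}\big)$. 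The one point requiring care is precisely this uniformity in $d$: one must check that ranging over the prime divisors of $d$ costs only the factor $\log|d|$ and that no surviving power of $\log p$ spoils the bound, which is exactly what the boundedness of $(\log p)^{J+1}\mathrm{Li}_{-J}(1/p)$ ensures. All remaining manipulations are the same routine bookkeeping as in Lemma~\ref{ram_computation}.
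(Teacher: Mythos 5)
Your proposal is correct and follows essentially the same route as the paper: reduce to the primes $p\equiv 3\bmod 4$ dividing $d$, Taylor expand $\widehat\phi$ at the origin, complete the $n$-sums to the polylogarithms $\mathrm{Li}_{-j}((-1)^k/p)$, and control the error using $(\log p)^{J+1}\mathrm{Li}_{-J}(1/p)\ll_J 1$ together with the fact that $d$ has $\ll\log|d|$ prime divisors. The only (immaterial) difference is that you truncate the inner sum using the compact support of $\widehat\phi$ at $n\le A\log(k M_{d,k})/\log p$, whereas the paper cuts at $J\log\log(k M_{d,k})/\log p$ and bounds the tail explicitly; both yield the same negligible contribution.
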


\begin{proof} We get for $k \not \equiv 0 \bmod 4$
\begin{align*}
U_{\textnormal{inert},d}(\phi,k)&=
\frac{1}{\log{(k\n)}} \sum_{\substack{p \equiv 3 \bmod4\\ p \mid d}}\sum_{n \geq 1} \frac{2(-1)^{kn}\log{p}}{p^n} \; \widehat{\phi}  \left( \frac{n \log{p}}{\log(k\n)} \right).
\end{align*}
For each such $p|d$, we cut the inner sum at $N_{p} = J \log \log (k\n)/\log p$, to obtain
\begin{align*}
\frac{1}{\log (k\n)}\sum_{n \geq N_{p} + 1} \frac{2(-1)^{kn}\log{p} }{p^{n}}\widehat{\phi}\left(\frac{n \log p}{\log( k\n)}\right)\ll \frac{p^{-(N_{p}+1)}\log p}{\log (k\n)} = \frac{\log p}{p}(\log(k\n))^{-J-1}.
\end{align*}
Taylor expanding  as in \eqref{taylor_expansion}, we find that
\begin{multline*}
\frac{1}{\log (k\n)}\sum_{n = 1}^{\infty} \frac{2(-1)^{kn}\log{p} }{p^{n}}\widehat{\phi}\left(\frac{n \log p}{\log (k\n)}\right) \\= \frac{1}{\log (k\n)}\sum_{1 \leq n \leq N_{p}} \frac{2(-1)^{kn}\log{p} }{p^{n}}\sum_{j=0}^{J-1}\frac{\widehat{\phi}^{(j)}(0)}{j!}\left(\frac{n \log p}{\log(k\n)}\right)^{j}
+O_{J}\left(\frac{1}{p}\left(\frac{\log p}{\log(k\n)}\right)^{J+1}\right),
\end{multline*}
where the bound on the error term is computed upon noting that
\[\frac{1}{\log (k\n)}\sum_{1\leq n \leq N_{p}} \frac{\log{p} }{p^{n}}\left(\frac{n \log p}{\log(k\n)}\right)^{J} \ll \sum_{n=1}^{\infty}\frac{n^{J}}{p^{n}}\left(\frac{\log{p}}{\log(k\n)}\right)^{J+1} \ll_{J} \frac{1}{p}\left(\frac{\log p}{\log(k\n)}\right)^{J+1}.\]
Upon applying the trivial bound
\[\sum_{p\mid d}\frac{(\log p)^{J+1}}{p}\ll_{J} \sum_{p\mid d}\log p = \log |d|,\]
we thus obtain
\begin{align*}
U_{\textnormal{inert},d}(\phi,k)&=\sum_{j=0}^{J-1}\frac{\widehat{\phi}^{(j)}(0)}{j!}\sum_{\substack{p \equiv 3 \bmod 4 \\ p \mid d}} \sum_{1 \leq n \leq N_{p}} \frac{2(-1)^{kn} n^j}{p^{n}}\left(\frac{ \log p}{\log(k\n)}\right)^{j+1}+O_{J}\left(\frac{\log |d|}{(\log(k\n))^{J+1}}\right).
\end{align*}
Upon repeated application of integration by parts, we find similarly to as above that
\begin{align*}
\sum_{n\geq N_{p}+1} \frac{n^{j}(\log{p})^{j+1}}{p^{n}}\ll_{j} \frac{N_{p}^{j}(\log{p})^{j+1}}{p^{N_{p}}} =  O_{J}\left(\frac{(\log \log(k\n))^{j}\log p}{(\log(k\n))^{J}}\right),
\end{align*}
and therefore we have
\begin{align*}
U_{\textnormal{inert},d}(\phi,k)	&=\sum_{j=0}^{J-1}\frac{\widehat{\phi}^{(j)}(0)}{j!(\log(k\n))^{j+1}}\sum_{\substack{p \equiv 3 \bmod 4 \\ p \mid d}}\sum_{n\geq 1} \frac{2(-1)^{kn}n^{j}(\log{p})^{j+1}}{p^{n}}+O_{J}\left(\frac{\log |d|}{(\log(k\n))^{J+1}}\right),
\end{align*}
as desired, upon recalling that~$\phat^{(j)}(0) = 0$ when $j$ is odd.
\end{proof}

Next we compute $U_{\textnormal{inert}}(\phi,k)$. To this end, we define
\begin{equation}\label{psi}
\psi_{k}(t; 3,4) := (-1)^{k}\sum_{\substack{p^n \leq t\\ p \equiv 3 \bmod4 \\ n \geq 1}} (-1)^{kn}\log {p} = \frac{t}{2} + O_A \left( \frac{t}{(\log{t})^A} \right),
\end{equation}
for any $A> 0$, by the prime number theorem in arithmetic progressions.

\begin{lem}\label{inert_computation}
Let $k$ be a positive integer, $d$ be an odd square-free integer, and $\phi$ be an even Schwartz function such that $\widehat{\phi}$ is compactly supported.
 Then for any $J \in \mathbb{N}$ and $k \rightarrow \infty$, we have
\[U_{\textnormal{inert}}(\phi,k) = \frac{(-1)^{k+1}}{2} \int_{\R}\widehat{\phi}(u) du+
\sum_{\substack{j=0 \\ j \textnormal{ even}}}^{J-1}\frac{c_{j,\textnormal{inert}}(k) \; \widehat{\phi}^{(j)}(0)}{(\log(k\n))^{j+1}}+O_{J}\left((\log(k\n))^{-J-1}\right),\]
where
\begin{equation}\label{c_0_inert}
c_{0,\textnormal{inert}}(k):=(-1)^{k+1}\left(1+2\int_1^\infty
 \frac{(\psi_{k}(t; 3,4) -\frac{t}2)}{t^{2}}\dd t\right)
\end{equation}
and for $j \geq 1$,
\begin{equation}\label{c_j_inert}
c_{j,\textnormal{inert}}(k):=2(-1)^{k} \int_1^\infty
 \frac{(\psi_{k}(t; 3,4) -\frac{t}2)}{t^{2}} \;\frac{(\log t)^{j-1}}{(j-1)!}\left(1-\frac{\log t}{j}\right)\dd t.
 \end{equation}
 \end{lem}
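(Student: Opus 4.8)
The plan is to rewrite the sum defining $U_{\textnormal{inert}}(\phi,k)$ in \eqref{Def Uinert indep d} as a Riemann--Stieltjes integral against $\psi_k(\,\cdot\,;3,4)$ and then exploit the prime number theorem estimate recorded in \eqref{psi}. Since $\psi_k(\,\cdot\,;3,4)$ has a jump of size $(-1)^{k(n+1)}\log p$ at each prime power $p^n$ with $p\equiv 3\bmod 4$, multiplying through by $(-1)^k$ collapses the sign $(-1)^{k(n+2)}$ back to $(-1)^{kn}$ and gives
\[
U_{\textnormal{inert}}(\phi,k)=-\frac{2(-1)^k}{\log(k\n)}\int_1^\infty\frac{1}{t}\,\widehat\phi\!\left(\frac{\log t}{\log(k\n)}\right)\dd\psi_k(t;3,4).
\]
Writing $\psi_k(t;3,4)=\tfrac{t}{2}+E_k(t)$ with $E_k(t):=\psi_k(t;3,4)-\tfrac{t}{2}$, the part $\tfrac{1}{2}\,\dd t$ of the Stieltjes measure contributes, after the change of variables $u=\log t/\log(k\n)$, precisely $-(-1)^k\int_0^\infty\widehat\phi(u)\,\dd u=\tfrac{(-1)^{k+1}}{2}\int_{\R}\widehat\phi(u)\,\dd u$, using that $\widehat\phi$ is even; this is the claimed main term.

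For the remaining integral $\int_1^\infty t^{-1}\widehat\phi(\log t/\log(k\n))\,\dd E_k(t)$ I would integrate by parts. As $\widehat\phi$ is compactly supported the integrand vanishes for $t$ large, so the only boundary term is at $t=1$; there $\widehat\phi(0)$ times $E_k(1)=\psi_k(1;3,4)-\tfrac{1}{2}=-\tfrac{1}{2}$ yields a contribution $\tfrac{1}{2}\widehat\phi(0)$, which, after multiplication by the outer factor $-2(-1)^k/\log(k\n)$, supplies the ``$1$'' inside $c_{0,\textnormal{inert}}(k)$. The derivative of $g(t):=t^{-1}\widehat\phi(\log t/\log(k\n))$ equals $-t^{-2}\widehat\phi(\log t/\log(k\n))+t^{-2}(\log(k\n))^{-1}\widehat\phi'(\log t/\log(k\n))$, so what remains are two integrals of the form $\int_1^\infty t^{-2}E_k(t)\,\Phi(\log t/\log(k\n))\,\dd t$ with $\Phi\in\{\widehat\phi,\widehat\phi'\}$.

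I would then Taylor-expand $\widehat\phi$ to order $J-1$ and $\widehat\phi'$ to order $J-2$ about $0$, turning each such integral into $\sum_j\frac{\widehat\phi^{(j)}(0)}{j!(\log(k\n))^j}\int_1^\infty t^{-2}E_k(t)(\log t)^j\,\dd t$ plus a remainder. Adding the $\widehat\phi$-contribution to the index-shifted $\widehat\phi'$-contribution, the coefficient of $\widehat\phi^{(j)}(0)/(\log(k\n))^{j+1}$ for $j\ge 1$ telescopes to $2(-1)^k\int_1^\infty t^{-2}E_k(t)\frac{(\log t)^{j-1}}{(j-1)!}\bigl(1-\tfrac{\log t}{j}\bigr)\,\dd t$, which is exactly \eqref{c_j_inert}, while for $j=0$ the boundary constant combines with $-2(-1)^k\int_1^\infty t^{-2}E_k(t)\,\dd t$ to give \eqref{c_0_inert}; since $\phi$ is even, $\widehat\phi^{(j)}(0)=0$ for odd $j$, so the sum runs over even $j$ only.

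The step I expect to require the most care is the error analysis. One must check that each integral $\int_1^\infty t^{-2}|E_k(t)|(\log t)^j\,\dd t$ converges and is bounded uniformly in $k$, and that the Taylor remainders together with the tail beyond the support cutoff $t\le (k\n)^{O(1)}$ contribute $O_J((\log(k\n))^{-J-1})$. For $t$ bounded one uses the trivial bound $|E_k(t)|\ll t$; for $t$ large one uses \eqref{psi}, i.e.\ $E_k(t)\ll_A t(\log t)^{-A}$ with, say, $A=J+2$. Here it is worth noting that $\psi_k(\,\cdot\,;3,4)$ depends on $k$ only through its parity (for $k$ odd the contribution of the higher prime powers is only $O(\sqrt t)$, so it reduces to $\theta(t;3,4)$ up to $O(\sqrt t)$), whence all implied constants are absolute; this is what makes the error term in the statement uniform in $k$.
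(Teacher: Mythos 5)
Your proposal is correct and follows essentially the same route as the paper: rewrite $U_{\textnormal{inert}}(\phi,k)$ as a Stieltjes integral against $\psi_{k}(t;3,4)$, split off the $\tfrac{t}{2}$ main term, integrate by parts, Taylor-expand $\widehat{\phi}$ (equivalently $\widehat{\phi}$ and $\widehat{\phi}'$) at $0$, and control the tails and Taylor remainders via \eqref{psi} with $A>J+1$. The only cosmetic difference is that you split the measure before integrating by parts, so the ``$1$'' in $c_{0,\textnormal{inert}}(k)$ appears as the boundary term coming from $\psi_{k}(1;3,4)-\tfrac12=-\tfrac12$, whereas the paper integrates by parts first and recovers the same term when handling the $\tfrac{t}{2}$ piece.
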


\begin{proof}
	Recalling~\eqref{Def Uinert indep d}, we write
\begin{align*}
U_{\textnormal{inert}}(\phi,k) = - \frac{1}{\log{(k\n)}} \sum_{\substack{p \equiv 3 \bmod4\\n \geq 1}} \frac{2(-1)^{kn} \log{p}}{p^n} \; \widehat{\phi}  \left( \frac{n \log{p}}{\log{(k\n)}} \right)
\end{align*}
and compute
\begin{align} \label{before-PNT-ET} \nonumber
&\sum_{\substack{p \equiv 3 \bmod 4\\n \geq 1}} \frac{2(-1)^{kn} \log{p}}{p^n} \; \widehat{\phi}  \left( \frac{n \log{p}}{\log{(k\n)}} \right)\\  \nonumber
&=2 (-1)^{k}
\int_1^\infty \widehat\phi \left( \frac{\log t}{\log (k\n)} \right) \frac{\dd \psi_{k}(t; 3,4)}{t}= 2(-1)^{k+1} \int_1^\infty \psi_{k}(t; 3,4) \dd\Bigg( \frac{ \widehat\phi \left( \frac{\log t}{\log (k\n)} \right)} {t} \Bigg) \\
 \nonumber
&= (-1)^{k}\left(\widehat\phi(0) + \int_1^\infty \widehat\phi \left( \frac{\log{t}}{\log{(k\n)}} \right) \frac{\dd t}{t}\right)
- 2(-1)^{k} \int_1^\infty \left(\psi_{k}(t; 3,4) -\frac{t}2 \right) \dd\Bigg( \frac{ \widehat\phi \left( \frac{\log t}{\log (k\n)} \right)} {t} \Bigg)\\
&= (-1)^{k}\left(\widehat\phi(0)+\frac{\log{(k\n)}}{2} \int_{\R}\widehat{\phi}(u) \dd u
- 2 \int_1^\infty \left(\psi_{k}(t; 3,4) -\frac{t}2 \right) \dd\Bigg( \frac{ \widehat\phi \left( \frac{\log t}{\log(k\n)} \right)} {t} \Bigg)\right),
\end{align}
since $\widehat{\phi}(u)$ is even. Noting that
\begin{align*}
\frac{\dd}{\dd t}\Bigg( \frac{ \widehat\phi \left( \frac{\log t}{\log(k\n)} \right)} {t} \Bigg) &= \frac{1}{t^{2}}\left(\frac{1}{\log(k\n)}\cdot \widehat{\phi}^{(1)}\left(\frac{\log t}{\log(k\n)}\right)-\widehat{\phi}\left(\frac{\log t}{\log(k\n)}\right)\right),
\end{align*}
and that
\[\widehat{\phi}\left(\frac{\log t}{\log(k\n)}\right) = \sum_{j=0}^{J-1}\frac{\widehat{\phi}^{(j)}(0)}{j!}\left(\frac{\log t}{\log(k\n)}\right)^{j}+O_{J}\left(\left(\frac{\log t}{\log(k\n)}\right)^{J}\right),\]
we find that
\begin{align*}
&\frac{\dd}{\dd t}\Bigg( \frac{ \widehat\phi \left( \frac{\log t}{\log(k\n)} \right)} {t} \Bigg) \\
&= \frac{1}{t^{2}}\left(\left(\sum_{j=0}^{J-1}\frac{\widehat{\phi}^{(j+1)}(0)}{j!}\left(\frac{(\log t)^{j}}{(\log(k\n))^{j+1}}\right)
-\sum_{j=0}^{J-1}\frac{\widehat{\phi}^{(j)}(0)}{j!}\left(\frac{\log t}{\log(k\n)}\right)^{j}\right)+O_{J}\left(\left(\frac{\log t}{\log(k\n)}\right)^{J}\right)\right)\\
&= \frac{1}{t^{2}}\left(-\widehat{\phi}(0)+\left(\sum_{j=1}^{J-1}\frac{\widehat{\phi}^{(j)}(0)(\log t)^{j-1}}{(\log(k\n))^{j}(j-1)!}\left(1-\frac{\log t}{j}\right)\right)+O_{J}\left(\frac{(\log t)^{J}+(\log t)^{J-1}}{\left(\log(k\n)\right)^{J}}\right)\right).
\end{align*}
Thus
\begin{align*}
&- 2 \int_1^\infty \Big(\psi_{k}(t; 3,4) -\frac{t}2 \Big) \dd\Bigg( \frac{ \widehat\phi \left( \frac{\log t}{\log(k\n)} \right)} {t} \Bigg)\\
&= 2 \int_1^\infty
 \frac{(\psi_{k}(t; 3,4) -\frac{t}2)}{t^{2}} \Bigg(\widehat{\phi}(0)-\left(\sum_{j=1}^{J-1}\frac{\widehat{\phi}^{(j)}(0)(\log t)^{j-1}}{(\log (k\n))^{j}(j-1)!}\left(1-\frac{\log t}{j}\right)\right)\\
 &\qquad \qquad \qquad \qquad \qquad \qquad \qquad \qquad \qquad +O_{J}\left(\frac{(\log t)^{J}+(\log t)^{J-1}}{(\log(k\n))^{J}}\right)\Bigg)\dd t \\
 &= 2 \widehat{\phi}(0)\int_1^\infty
 \frac{(\psi_{k}(t; 3,4) -\frac{t}2)}{t^{2}} \dd t
 - (-1)^k  \sum_{j=1}^{J-1} \frac{\widehat{\phi}^{(j)}(0)c_{j,\textnormal{inert}}(k)}{(\log(k\n))^{j}} +O_{J}\left((\log(k\n))^{-J}\right),
\end{align*}
and where the error term in the last step is bounded upon noting that for $A > J+1$,
\[\int_{1}^{\infty}\frac{(\psi_{k}(t; 3,4) -\frac{t}2)}{t^{2}}(\log t)^{J}  \dd t \ll \int_{1}^{\infty}\frac{ (\log t)^{J-A}}{t}  \dd t\ll 1,\]
by~\eqref{psi}.
Inserting this into~\eqref{before-PNT-ET}, we find that
\[U_{\textnormal{inert}}(\phi,k) =\frac{(-1)^{k+1}}{2} \int_{\R}\widehat{\phi}(u) \dd u+
\sum_{j=0}^{J-1}\frac{c_{j,\textnormal{inert}}(k) \widehat{\phi}^{(j)}(0)}{(\log(k\n))^{j+1}}+O_{J}\left((\log(k\n))^{-J-1}\right),\]
as desired, upon again recalling that~$\phat^{(j)}(0) = 0$ when $j$ is odd.
\end{proof}

By \eqref{D-phi-k} and \eqref{UL_sum}, we see that upon combining Lemma \ref{Gamma fixed k}, Lemma \ref{ram_computation}, Lemma \ref{inert_D_computation}, and Lemma \ref{inert_computation}, we obtain the following.

\begin{prop}\label{local_OLD}
Let $k$ be a positive integer, $d$ be an odd square-free integer, and $\phi$ be an even Schwartz function such that $\widehat{\phi}$ is compactly supported.
 Then for any $J \in \mathbb{N}$ and $k \rightarrow \infty$, we have
\begin{align*}
\mathcal{D}(\phi, \xi_{d,k}) &= \widehat{\phi}(0) + \frac{(-1)^{k+1}}{2} \int_{\R}\widehat{\phi}(u) \dd u\\
&\;\; +
\sum_{\substack{j=0 \\ j \textnormal{ even}}}^{J-1}\frac{c_{j}(d,k) \widehat{\phi}^{(j)}(0)}{(\log(k\n))^{j+1}}+ U_{\textnormal{split}}(\phi,d,k)+O_{J}\left(\frac{1+\log |d|}{(\log(k\n))^{J+1}}\right),
\end{align*}
where
\begin{equation}\label{c_0_combined}
c_{0}(d,k):= - \log{2\pi}+c_{0,\textnormal{ram}}(k)+c_{0,\textnormal{inert}}(k)+c_{0,\textnormal{inert},d}(k)
\end{equation}
and for $j \geq 1$,
\begin{equation}\label{c_j_combined}
	c_{j}(d,k):=c_{j,\textnormal{ram}}(k)+c_{j,\textnormal{inert}}(k)+c_{j,\textnormal{inert},d}(k).
\end{equation}
We moreover observe that $c_{j}(d,k)$ depends  only on the congruence class of $k$ modulo~$8$.
\end{prop}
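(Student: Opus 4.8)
The proposition is obtained by assembling the four preceding lemmas along the decompositions already established, so the argument is essentially bookkeeping; the only points requiring attention are the treatment of the $j=0$ coefficient and the merging of the several error terms.

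The plan is as follows. First I would combine \eqref{D-phi-k}, \eqref{UL_sum}, and the splitting $U_{\textnormal{inert}}(\phi,d,k)=U_{\textnormal{inert}}(\phi,k)+U_{\textnormal{inert},d}(\phi,k)$ to write
\[
\mathcal{D}(\phi,\xi_{d,k})=U_{\Gamma}(\phi,d,k)+U_{\textnormal{split}}(\phi,d,k)+U_{\textnormal{inert}}(\phi,k)+U_{\textnormal{inert},d}(\phi,k)+U_{\textnormal{ram}}(\phi,k).
\]
Next I would substitute the asymptotic expansions of Lemma~\ref{Gamma fixed k} (for $U_\Gamma$), Lemma~\ref{inert_computation} (for $U_{\textnormal{inert}}$), Lemma~\ref{inert_D_computation} (for $U_{\textnormal{inert},d}$) and Lemma~\ref{ram_computation} (for $U_{\textnormal{ram}}$), all taken to the common depth $J$, leaving $U_{\textnormal{split}}(\phi,d,k)$ untouched. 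The only main terms produced are $\widehat\phi(0)$ (from $U_\Gamma$) and $\tfrac{(-1)^{k+1}}{2}\int_\R\widehat\phi(u)\,\dd u$ (from $U_{\textnormal{inert}}$), which together form the leading part of the asserted formula; every other contribution is either an error or a lower-order term proportional to $\widehat\phi^{(j)}(0)(\log(k\n))^{-j-1}$ with $0\le j\le J-1$. Since $\phi$ is even, $\widehat\phi^{(j)}(0)=0$ for odd $j$, so only even $j$ survive, matching the range of summation in the statement.

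Collecting the coefficient of $\widehat\phi^{(j)}(0)(\log(k\n))^{-j-1}$ then produces $c_j(d,k)$. For $j\ge 1$ this coefficient is exactly $c_{j,\textnormal{ram}}(k)+c_{j,\textnormal{inert}}(k)+c_{j,\textnormal{inert},d}(k)$, i.e.\ \eqref{c_j_combined}; for $j=0$ one must additionally absorb the term $-\tfrac{\log(2\pi)}{\log(k\n)}\widehat\phi(0)$ coming from $-\tfrac{\log 2\pi}{\log(kM_{d,k})}\widehat\phi(0)$ in Lemma~\ref{Gamma fixed k}, which contributes the extra summand $-\log 2\pi$ in \eqref{c_0_combined}. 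Finally the error terms $O\!\left(\tfrac{1}{k\log(k\n)}\right)$, $O_J\!\left((\log(k\n))^{-J-1}\right)$ (appearing twice) and $O_J\!\left(\tfrac{\log|d|}{(\log(k\n))^{J+1}}\right)$ are combined: because $d$ is fixed the conductor $\ff_{d,k}$ takes only finitely many values, so $\n=\N(\ff_{d,k})^{1/2}$ is bounded and $\log(k\n)\asymp\log k$, whence $\tfrac{1}{k\log(k\n)}=O_J\!\left((\log(k\n))^{-J-1}\right)$ as $k\to\infty$; the three errors therefore merge into $O_J\!\left(\tfrac{1+\log|d|}{(\log(k\n))^{J+1}}\right)$, as claimed.

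For the closing remark, I would inspect the closed forms of the constituent constants --- $c_{j,\textnormal{ram}}(k)$ via \eqref{ram_lower_order}, $c_{j,\textnormal{inert},d}(k)$ via \eqref{c_inert_D}, $c_{j,\textnormal{inert}}(k)$ via \eqref{c_0_inert}--\eqref{c_j_inert} and \eqref{psi} --- together with the conductor formula \eqref{conductor} for $N_{d,k}=\N(\ff_{d,k})$: each depends on $k$ only through a bounded residue class (the three-way split of \eqref{conductor}, the sign $(-1)^k$ for the inert data, and $\xi_{d,k}((1+i))$ for the ramified data), from which the asserted dependence of $c_j(d,k)$ on $k$ follows. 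I do not expect any genuine obstacle: the proof is a routine combination of the established lemmas, the only delicate points being the fold-in of $-\log 2\pi$ into $c_0(d,k)$, the vanishing of the odd-$j$ terms, and the verification that the $1/(k\log(k\n))$ error is dominated by $(\log(k\n))^{-J-1}$ for fixed $d$.
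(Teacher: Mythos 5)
Your proof is correct and is precisely what the paper intends: the paper itself gives no explicit argument but only the one-sentence remark that the proposition follows from combining Lemmas~\ref{Gamma fixed k}, \ref{ram_computation}, \ref{inert_D_computation} and \ref{inert_computation} via the decomposition \eqref{D-phi-k}, \eqref{UL_sum}; your write-up supplies exactly that bookkeeping, correctly folding the $-\log 2\pi$ term from the $\Gamma$-factor into $c_0(d,k)$ and correctly absorbing $O(1/(k\log(k\n)))$ into $O_J((\log(k\n))^{-J-1})$ for fixed $d$.

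One small point worth noting: your careful tracing of the $k$-dependence of the constituent constants actually shows that $c_j(d,k)$ depends on $k$ through the residue class modulo $8$, not modulo $4$ — because $c_{j,\textnormal{ram}}(k)$ in \eqref{ram_lower_order} involves the factor $(-1)^{k/4}$ (equivalently $\xi_{d,k}((1+i))$), which distinguishes $k\equiv 0$ from $k\equiv 4 \pmod 8$. The proposition's closing sentence asserts dependence only modulo $4$, which is a slight overstatement; the paper itself implicitly corrects this in the proof of Proposition~\ref{inert-primes-even}, where it uses the fact that $c_j(d,k)$ and $\n$ depend only on $k\bmod 8$. Since the family $\cF_d^\alpha$ is in any case stratified by $k\bmod 8$, this has no effect on Theorem~\ref{thm-main}, but your analysis is the more accurate one.

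Also a cosmetic slip: you refer to merging ``the three errors'' where you in fact enumerate four (one each from $U_\Gamma$, $U_{\textnormal{ram}}$, $U_{\textnormal{inert}}$, $U_{\textnormal{inert},d}$); the merged bound $O_J\big((1+\log|d|)/(\log(k\n))^{J+1}\big)$ is nonetheless correct.
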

\section{An Alternative Expression for $c_{j,\textnormal{inert}}(k)$}\label{alternative_lower_constants}

Next, we provide an alternative, more explicit, expression for the lower order constants, $c_{j,\textnormal{inert}}(k)$, which is moreover ultimately more amenable to numerical analysis.  

\begin{lem} Let $k$ be a positive integer. Then, we have
 \begin{equation}\label{c_0_inert explicit}
	c_{0,\textnormal{inert}}(k)= (-1)^{k}\left(\gamma_{0}-\frac{L'}{L}(1,\chi_{4}) + \log 2\right) +2 \frac{\zeta'_{4,3}}{\zeta_{4,3}}(2),
\end{equation}
where $\zeta_{4,3}(s) := \prod\limits_{p\equiv 3 \bmod 4}(1 - p^{-s})^{-1}$ on $ \re(s) > 1$.
\end{lem}
\begin{proof}
To begin, we write
\begin{align} \label{ID1}
2 \int_1^\infty
 \frac{\psi_{k}(t; 3,4) -\frac{t}2}{t^{2}}\dd t&=\int_1^\infty
 \frac{2\psi_{k}(t; 3,4) -\psi(t)+\psi(t)-t}{t^{2}}\dd t.
\end{align}
Then, denoting the \textit{Chebyshev} $\psi$-functions\[\psi(t):= \sum_{p^{n} \leq t}\log p \quad \textnormal{ and } \quad \psi(t,\chi_{4}):=\sum_{p^{n} \leq t}\chi_{4}(p^{n})\log p,\]
where $\chi_{4}$ is the quadratic Dirichlet character modulo $4$ over $\Z$,
we note the identity
\begin{align} \label{ID2}
2\psi_k(t;3,4) - (-1)^k 2\psi_0(\sqrt{t};3,4) =
2 \sum_{\substack{p \equiv 3 \bmod4 \\ n \textnormal{ odd} \\ p^n \leq t }} \log p
= \psi(t) - \psi(t,\chi_4) - \sum_{2^n\leq t}\log 2.
\end{align}
Using \eqref{ID1} and \eqref{ID2}, we get
\begin{align}
 \label{3-integrals}
 2 \int_1^\infty
 \frac{\psi_{k}(t; 3,4) -\frac{t}2}{t^{2}}\dd t
 &=\int_1^\infty
 \frac{-\psi(t,\chi_{4})+\psi(t)-t+2(-1)^{k}\psi_{0}(\sqrt{t};3,4) - \log 2 \lfloor \tfrac{\log t}{\log 2}\rfloor}{t^{2}} \;\dd t.
 \end{align}
We separate this integral into four components.
We first compute the contribution of the prime $2$ by noting that
\begin{align}\label{c_0 inert at 2}
	- \log 2 \int_1^{\infty} \lfloor \tfrac{\log t}{\log 2}\rfloor \frac{\dd t}{t^2} = 	-  \log 2 \sum_{n =0}^{\infty} n \int_{2^n}^{2^{n+1}}  \frac{\dd t}{t^2}
	= 	- \log 2 \sum_{n =0}^{\infty} \frac{n}{2^{n+1}} = - \log 2.
\end{align}
Next, by the prime number theorem in arithmetic progressions, together with Perron's formula, we write
\begin{align*}
-\int_1^\infty
 \frac{\psi(t,\chi_{4})}{t^{2}}\dd t &=
 -\int_1^T
 \frac{\psi(t,\chi_{4})}{t^{2}}\dd t + O\left(\frac{1}{(\log T)^{A}}\right)\\
 &=\frac{1}{2\pi i}\int_1^T\left(\int_{C_{U}(t)}
 \frac{L'}{L}(s,\chi_{4})\frac{t^{s-2}}{s}\dd s\right)\dd t + O\left(\frac{1}{(\log T)^{A}}+ \frac{(\log T)^{3}}{U}\right),
\end{align*}
for any $A > 1$, where for any $t \geq 1$, $C_{U}(t)$ denotes a finite path taken along the vertical line-segment $\re(s)=1+\tfrac{1}{\log (1+|t|)}$, where $|\im(s)|\leq U$.  Next, we shift the inner integral to a finite path $C(U)$, taken within a zero-free region of $L(s,\chi_{4})$ located to the left of the line $\re(s) = 1 - \tfrac{c}{\log U}$ for some $c > 0$.  Upon applying appropriate bounds on the growth of $\tfrac{L'}{L}(s,\chi_{4})$ within the critical strip (e.g. \cite[Thm. 11.4]{MV}), and picking up an additional error term from the horizontal segments, we find that
\begin{align*}
-\int_1^\infty
 \frac{\psi(t,\chi_{4})}{t^{2}}\dd t
&= \frac{1}{2\pi i}\int_{C(U)}
 \frac{L'}{L}(s,\chi_{4}) \int_1^T t^{s-2} \dd t\frac{\dd s }{s}+  O\left(\frac{1}{(\log T)^{A}}+ \frac{(\log T)^{3}}{U}+ \frac{\log T \log U}{U}\right).
\end{align*}
Next, we note that for any $s$ on $C(U)$,
$$
\int_{1}^{T} t^{s-2} \; \dd t =  \frac{T^{s-1}-1}{s-1} = \frac{1}{1-s}+O\left(\frac{\exp\left(-\frac{c\log T}{\log U}\right)}{|s-1|}\right).
$$
Upon setting $U = (\log T)^{4}$ and taking the limit as $T \rightarrow \infty$, we find that
\begin{equation*}
-\int_1^\infty
 \frac{\psi(t,\chi_{4})}{t^{2}}\dd t = \frac{1}{2\pi i}\int_{C}
 \frac{L'}{L}(s,\chi_{4})\frac{\dd s}{s(1-s)},
 \end{equation*}
where $C$ is now an infinite path taken within a zero-free region of $L(s,\chi_{4})$ located to the left of the line $\re(s) = 1$. Proceeding, we move the integral to the line Re$(s) = R > 1$ and pick up the pole at $s=1$.  Noting that $\frac{L'}{L}(s,\chi_4)$ is bounded on the vertical line Re$(s) = R$, it then follows that
\[\int_{(R)}
 \frac{L'}{L}(s,\chi_{4})\frac{\dd s}{s(1-s)} = o \left(\frac{1}{R}\right),\]
which goes to zero as $R \rightarrow \infty$.  This yields
\begin{equation}\label{L_at_one}
-\int_1^\infty
 \frac{\psi(t,\chi_{4})}{t^{2}}\dd t  =  \frac{L'}{L}(1,\chi_{4}).
 \end{equation}
Similarly, by applying the bound in \cite[Thm. 6.7]{MV}, we find that
\begin{align*}
\int_1^\infty
 \frac{\psi(t)-t}{t^{2}}\dd t &=\frac{1}{2\pi i}\int_1^\infty\left(\int_{(2)}
 - \frac{\zeta'}{\zeta}(s)\frac{t^{s-2}}{s}\dd s-\frac{1}{t}\right)\dd t\\
&=\frac{1}{2\pi i}\int_1^\infty\left(\int_{C}
 -\frac{\zeta'}{\zeta}(s)\frac{t^{s-2}}{s}\dd s+\frac{1}{t}-\frac{1}{t}\right)\dd t =\frac{1}{2\pi i}\int_{C}
 \frac{\zeta'}{\zeta}(s)\frac{\dd s}{s(s-1)},
 \end{align*}
where $C$ is defined analogously to as above.  Recall that
\[\zeta(1+s) = \frac{1}{s}+\sum_{n=0}^{\infty}\frac{(-1)^{n}}{n!}\gamma_{n}s^{n},\]
where $\gamma_{n}$ are known as the \textit{Stieltjes constants}  (and $\gamma_{0}$ is the \textit{Euler–Mascheroni constant}), so that
\begin{align*}
\frac{1}{s(s-1)}\frac{\zeta'}{\zeta}(s) &= \left(\frac{1}{s-1}-1+O(s-1)\right)\left(-\frac{1}{s-1}+\gamma_{0}+O(s-1)\right)\\
&=-\frac{1}{(s-1)^2}+\frac{\gamma_{0}+1}{s-1}+O(1).
\end{align*}
Upon shifting the contour $C$ far to the right, we find by Cauchy's residue theorem that
\begin{equation}\label{gamma_constant_revealed}
\int_1^\infty
 \frac{\psi(t)-t}{t^{2}}\dd t=\frac{1}{2\pi i}\int_{C}
 \frac{\zeta'}{\zeta}(s)\frac{\dd s}{s(s-1)}= -\gamma_{0}-1.
 \end{equation}
Finally, we note that
\begin{equation}\label{arithmetic_LOT}
\int_{1}^{\infty} \frac{2\psi_{0}(\sqrt{t};3,4)}{t^{2}}\dd t = - \frac{1}{2\pi i}\int_1^\infty 2\left(\int_{C}\frac{\zeta'_{4,3}}{\zeta_{4,3}}(2s)\frac{t^{s-2}}{s}\dd s\right)\dd t = -2\frac{\zeta'_{4,3}}{\zeta_{4,3}}(2),\end{equation}
 where
 \begin{equation}\label{zeta43}
 	\zeta_{4,3}(s) := \prod_{p\equiv 3 \bmod 4}(1 - p^{-s})^{-1}, \quad \quad \re(s) > 1,
 \end{equation}
so that
 \begin{equation}\label{Def_A_sum}
 	\frac{\zeta'_{4,3}}{\zeta_{4,3}}(s) = - \sum_{\substack{p\equiv 3 \bmod 4}}\sum_{n=1}^{\infty}\frac{\log p}{p^{ns}}.
 \end{equation}
Inserting \eqref{c_0 inert at 2}, \eqref{L_at_one}, \eqref{gamma_constant_revealed}, and \eqref{arithmetic_LOT}, into \eqref{3-integrals},
we have
\begin{align}
	2 \int_1^\infty
	\frac{\psi_{k}(t; 3,4) -\frac{t}2}{t^{2}}\dd t
	&= \frac{L'}{L}(1,\chi_{4}) -\gamma_{0}-1 +2(-1)^{k+1} \frac{\zeta'_{4,3}}{\zeta_{4,3}}(2) - \log 2,
\end{align}
from which the lemma follows by \eqref{c_0_inert}.
\end{proof}

\begin{lem}\label{lem c_j inert express middle} Let $k, j$ be positive integers. Then, we have
 \begin{align}\label{c_j expressed middle}
c_{j,\textnormal{inert}}(k)=&\frac{(-1)^{j+k}}{j!}\left(\left(\frac{f'}{f}\right)^{(j)}(1)-\left(\frac{L'}{L}\right)^{(j)}(1,\chi_{4})\right) \\ &
+ (-1)^{j} \frac{2^{j+1}}{j!} \left(\frac{\zeta'_{4,3}}{\zeta_{4,3}}\right)^{(j)}(2)  + (-1)^k \frac{(\log 2)^{j+1}}{j!}\mathrm{Li}_{-j}(\tfrac12),\nonumber
\end{align}
where $f(s):=(s-1)\zeta(s)$, $\zeta_{4,3}(s)$ is as in~\eqref{zeta43} and where $\mathrm{Li}_{-j}(z)$ is as in~\eqref{def-Li}.
\end{lem}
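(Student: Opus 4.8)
The plan is to follow the argument used to establish \eqref{c_0_inert explicit} in the previous lemma, now carrying through the extra weight $w_j(t):=\frac{(\log t)^{j-1}}{(j-1)!}\bigl(1-\tfrac{\log t}{j}\bigr)$. The key elementary observation is that $\frac{w_j(t)}{t^2}=g_j'(t)$, where $g_j(t):=\frac{(\log t)^j}{j!\,t}$ satisfies $g_j(1)=0=\lim_{t\to\infty}g_j(t)$; equivalently, for $\mathrm{Re}(s)<1$,
\[\int_1^\infty t^{s-2}w_j(t)\,\dd t=\frac{1}{(1-s)^j}-\frac{1}{(1-s)^{j+1}}=\frac{-s}{(1-s)^{j+1}},\]
which is the analogue, valid for $j\ge1$, of the identity $\int_1^\infty t^{s-2}\,\dd t=\frac{1}{1-s}$ used in the case $j=0$.

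Starting from \eqref{c_j_inert}, I would write $c_{j,\textnormal{inert}}(k)=(-1)^k\int_1^\infty\frac{2\psi_k(t;3,4)-t}{t^2}\,w_j(t)\,\dd t$ and insert \eqref{ID2} in the form
\[2\psi_k(t;3,4)-t=\Bigl(-\psi(t,\chi_4)+(-1)^k2\psi_0(\sqrt t;3,4)-\log2\,\lfloor\tfrac{\log t}{\log2}\rfloor\Bigr)+\bigl(\psi(t)-t\bigr),\]
obtaining $c_{j,\textnormal{inert}}(k)=(-1)^k(I_1+I_2+I_3+I_4)$, where $I_1,\dots,I_4$ are the integrals over $[1,\infty)$ of $-\psi(t,\chi_4)$, $(-1)^k2\psi_0(\sqrt t;3,4)$, $-\log2\lfloor\tfrac{\log t}{\log2}\rfloor$ and $\psi(t)-t$ respectively, each against $\frac{w_j(t)}{t^2}\,\dd t=g_j'(t)\,\dd t$. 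The two arithmetic pieces $I_2$ and $I_3$ are elementary: since $\lfloor\tfrac{\log t}{\log2}\rfloor=\sum_{n\ge1}\mathbf{1}[t\ge2^n]$ and $\int_{2^n}^\infty g_j'(t)\,\dd t=-g_j(2^n)=-\frac{(n\log2)^j}{j!\,2^n}$, termwise integration gives $I_3=\frac{(\log2)^{j+1}}{j!}\mathrm{Li}_{-j}(\tfrac12)$; writing $2\psi_0(\sqrt t;3,4)=2\sum_{p\equiv3(4),\,m\ge1}(\log p)\,\mathbf{1}[t\ge p^{2m}]$ and using $\log p\,(2\log p)^j=\tfrac12(2\log p)^{j+1}$ gives $I_2=-(-1)^k\sum_{p\equiv3(4)}\frac{(2\log p)^{j+1}}{j!}\mathrm{Li}_{-j}(\tfrac1{p^2})$.

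For $I_1$ and $I_4$ I would reproduce the Perron/contour manipulation of the $j=0$ proof. Writing $\psi(t,\chi_4)=\frac1{2\pi i}\int_C\bigl(-\tfrac{L'}{L}(s,\chi_4)\bigr)\frac{t^s}{s}\,\dd s$ and $\psi(t)-t=\frac1{2\pi i}\int_C\bigl(-\tfrac{\zeta'}{\zeta}(s)\bigr)\frac{t^s}{s}\,\dd s$ with $C$ a vertical line just to the left of $\mathrm{Re}(s)=1$ inside a zero-free region, interchanging the $t$- and $s$-integrals, and evaluating the inner $t$-integral with the displayed identity, one finds $I_1=-\frac1{2\pi i}\int_C\frac{L'}{L}(s,\chi_4)\frac{\dd s}{(1-s)^{j+1}}$ and $I_4=\frac1{2\pi i}\int_C\frac{\zeta'}{\zeta}(s)\frac{\dd s}{(1-s)^{j+1}}$. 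Shifting $C$ to $\mathrm{Re}(s)=R$ and letting $R\to\infty$ (the integral on $\mathrm{Re}(s)=R$ tends to $0$ for $j\ge1$ thanks to the factor $(1-s)^{-(j+1)}$, with no auxiliary $1/s$ needed) picks up only the pole at $s=1$. Using $\frac{\zeta'}{\zeta}(s)=\frac{f'}{f}(s)-\frac1{s-1}$ with $\frac{f'}{f}$ holomorphic at $s=1$, the order-$(j+2)$ pole coming from $-\frac1{s-1}\cdot(1-s)^{-(j+1)}$ contributes nothing to the residue, and one obtains $I_1=\frac{(-1)^{j+1}}{j!}\bigl(\tfrac{L'}{L}\bigr)^{(j)}(1,\chi_4)$ and $I_4=\frac{(-1)^j}{j!}\bigl(\tfrac{f'}{f}\bigr)^{(j)}(1)$. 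Assembling $c_{j,\textnormal{inert}}(k)=(-1)^k(I_1+I_2+I_3+I_4)$ and using $(-1)^{j+k+1}=-(-1)^{j+k}$ then yields exactly \eqref{c_j expressed middle}.

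The only genuinely delicate steps are the analytic justifications inherited from the $j=0$ case: the interchange of $\int_1^\infty$ with the contour integral, the vanishing of the boundary contributions at $t=1$ and $t=\infty$ (which uses $\psi(t,\chi_4),\,\psi(t)-t=O\!\bigl(t\exp(-c\sqrt{\log t})\bigr)$ together with $g_j(t)\to0$), and the vanishing of the integral on $\mathrm{Re}(s)=R$. None of these is harder than in the case $j=0$, and the last is in fact easier here because of the stronger decay in $s$.
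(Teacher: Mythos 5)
Your proof is correct and follows essentially the same route as the paper: the decomposition \eqref{ID2}, the integral identity $\int_1^\infty t^{s-2}w_j(t)\,\dd t = (-1)^j s/(s-1)^{j+1}$ (which the paper obtains by induction via integration by parts as $I_j(s)$, and you obtain by the observation $w_j(t)/t^2 = g_j'(t)$), and a Perron/contour-shift argument isolating the residue at $s=1$ for the $\zeta$- and $L$-pieces. Your only deviation is evaluating the $\psi_0(\sqrt t;3,4)$ and $\lfloor\log t/\log 2\rfloor$ contributions by termwise integration against $g_j'$ rather than by a further Perron application to $A(s)$ and $B(s)$ as the paper does; this is a slightly tidier but equivalent computation, and all signs and constants check out.
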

\begin{proof}
To begin, recall that for $j \geq 1$,
\begin{equation*}
c_{j,\textnormal{inert}}(k)=2(-1)^{k} \int_1^\infty
 \frac{(\psi_{k}(t; 3,4) -\frac{t}2)}{t^{2}} \frac{(\log t)^{j-1}}{(j-1)!}\left(1-\frac{\log t}{j}\right)\dd t.
 \end{equation*}
As above, we note that
\begin{align} \nonumber
&2 \int_1^\infty
 \frac{(\psi_{k}(t; 3,4) -\frac{t}2)}{t^{2}}\frac{(\log t)^{j-1}}{(j-1)!}\left(1-\frac{\log t}{j}\right)\dd t\nonumber \\
 &=\int_1^\infty
 \frac{(-\psi(t,\chi_{4})+\psi(t)-t+2(-1)^{k}\psi_{0}(\sqrt{t};3,4) - \log 2 \lfloor \tfrac{\log t}{\log 2}\rfloor)}{t^{2}}\frac{(\log t)^{j-1}}{(j-1)!}\left(1-\frac{\log t}{j}\right)\dd t.\label{new3-integrals}
 \end{align}
We again treat the four terms separately.
First, by Perron's formula, we write
\begin{align*}
-\int_1^\infty
 \frac{\psi(t,\chi_{4})}{t^{2}}\frac{(\log t)^{j-1}}{(j-1)!}\left(1-\frac{\log t}{j}\right)\dd t &=\frac{1}{2\pi i}\int_1^\infty\left(\int_{C(t)}
 \frac{L'}{L}(s,\chi_{4})\frac{t^{s-2}}{s}\dd s\right)\frac{(\log t)^{j-1}}{(j-1)!}\left(1-\frac{\log t}{j}\right)\dd t,
\end{align*}
where $C(t)$ is again a vertical path to the right of the line $\re(s)=1$. We move the contour to the left to a path $C$, inside a zero-free region of $L(s,\chi_{4})$ and  flip the order of integration as above.
Let us first compute, for $j=1$ and $\re(s)<1$,
\begin{align*}
I_{1}(s) :=& \int_{1}^{\infty} t^{s-2}\left(1-\log t\right) \; \dd t=\int_{0}^{\infty}e^{(s-1)u}(1-u)\dd u\\
=& \left[(1-u)\frac{e^{u(s-1)}}{s-1}\right]_{0}^{\infty} + \int_{0}^{\infty}\frac{e^{u(s-1)}}{s-1} \dd u= -\frac{1}{s-1}+\frac{1}{s-1}\left[\frac{e^{u(s-1)}}{s-1}\right]_{0}^{\infty}\\
=& -\frac{1}{s-1}-\frac{1}{(s-1)^2} = -\frac{s}{(s-1)^2}.
\end{align*}
For $j \geq 2$, we have
\begin{align*}
I_{j}(s) :=& \int_{1}^{\infty} t^{s-2}\frac{(\log t)^{j-1}}{(j-1)!}\left(1-\frac{\log t}{j}\right) \; \dd t = \int_{0}^{\infty} e^{(s-1)u}\frac{u^{j-1}}{(j-1)!}\left(1-\frac{u}{j}\right) \; du\\
=& \left[
\frac{e^{(s-1)u}}{s-1}\left(\frac{u^{j-1}}{(j-1)!}-\frac{u^j}{j!}\right)
\right]_{0}^{\infty} - \frac{1}{s-1}\int_{0}^{\infty}e^{(s-1)u}\left(\frac{u^{j-2}}{(j-2)!}-\frac{u^{j-1}}{(j-1)!}\right)du\\
=& -\frac{1}{s-1}I_{j-1}(s) = \left(-\frac{1}{s-1}\right)^{j-1}I_{1}(s) = \frac{(-1)^{j}s}{(s-1)^{j+1}}
\end{align*}
 by induction.
It follows that
\begin{equation*}
-\int_1^\infty
 \frac{\psi(t,\chi_{4})}{t^{2}}\frac{(\log t)^{j-1}}{(j-1)!}\left(1-\frac{\log t}{j}\right)\dd t =\frac{1}{2\pi i}\int_{C}
 \frac{L'}{L}(s,\chi_{4}) \frac{I_j(s)}{s} \dd s= \frac{1}{2\pi i}\int_{C}
 \frac{L'}{L}(s,\chi_{4})\frac{(-1)^{j}}{(s-1)^{j+1}} \dd s,
 \end{equation*}
so that upon shifting the integral and picking up the pole at $s=1$, we find that
\begin{equation}\label{c_j L at one}
-\int_1^\infty
 \frac{\psi(t,\chi_{4})}{t^{2}}\frac{(\log t)^{j-1}}{(j-1)!}\left(1-\frac{\log t}{j}\right)\dd t  =  \frac{(-1)^{j+1}}{j!}\left(\frac{L'}{L}\right)^{(j)}(1,\chi_{4}).
 \end{equation}

 Similarly, with $C$ as defined above, we find that
\begin{multline*}
\int_1^\infty
 \frac{(\psi(t)-t)}{t^{2}}\frac{(\log t)^{j-1}}{(j-1)!}\left(1-\frac{\log t}{j}\right)\dd t=\frac{1}{2\pi i}\int_1^\infty\left(\int_{(2)}
 - \frac{\zeta'}{\zeta}(s)\frac{t^{s-2}}{s} \dd s-\frac{1}{t}\right)\frac{(\log t)^{j-1}}{(j-1)!}\left(1-\frac{\log t}{j}\right)\dd t\\
=\frac{1}{2\pi i}\int_1^\infty\left(\int_{C}
 -\frac{\zeta'}{\zeta}(s)\frac{t^{s-2}}{s} \dd s+\frac{1}{t}-\frac{1}{t}\right)\frac{(\log t)^{j-1}}{(j-1)!}\left(1-\frac{\log t}{j}\right)\dd t\\
=-\frac{1}{2\pi i}\int_{C}
 \frac{\zeta'}{\zeta}(s)I_{j}(s)\frac{\dd s}{s} =-\frac{1}{2\pi i}\int_{C}
 \frac{\zeta'}{\zeta}(s)\frac{(-1)^{j}}{(s-1)^{j+1}} \dd s.
 \end{multline*}
Upon writing
\[\zeta(s) = \frac{f(s)}{s-1},\]
where
\begin{equation}\label{zeta_holomorphic}
f(s) := 1+\sum_{n=0}^{\infty}\frac{(-1)^n}{n!}\gamma_{n}(s-1)^{n+1} = \sum_{k=0}^{\infty}\frac{f^{(k)}(1)}{k!}(s-1)^{k}
\end{equation}
is a holomorphic function, we find that
\begin{align*}
\frac{\zeta'}{\zeta}(s) &= \frac{f'}{f}(s)-\frac{1}{s-1}=-\frac{1}{s-1}+\sum_{k=0}^{\infty}\left(\frac{f'}{f}\right)^{(k)}(1)\frac{(s-1)^{k}}{k!}
\end{align*}
and therefore that 
\begin{align*}
-\frac{1}{2\pi i}\int_{C}
 \frac{\zeta'}{\zeta}(s)\frac{(-1)^{j}}{(s-1)^{j+1}}\dd s&=-\frac{1}{2\pi i}\int_{C}\left(-\frac{1}{(s-1)^{j+2}}+\sum_{k=0}^{\infty}\left(\frac{f'}{f}
 \right)^{(k)}(1)\frac{(s-1)^{k-j-1}}{k!}\right)(-1)^{j}\dd s\\
 &=\frac{(-1)^{j}}{j!}\left(\frac{f'}{f}\right)^{(j)}(1).
\end{align*}

We next note that
\begin{align*}
2(-1)^{k}\int_1^\infty
 &\frac{\psi_{0}(\sqrt{t};3,4)}{t^{2}}\frac{(\log t)^{j-1}}{(j-1)!}\left(1-\frac{\log t}{j}\right)\dd t \\
 &= \frac{(-1)^{k+1}}{2\pi i}\int_1^\infty2 \left(\int_{C}
\frac{\zeta'_{4,3}(2s)}{\zeta_{4,3}(2s)} \frac{t^{s-2}}{s} \dd s\right)\frac{(\log t)^{j-1}}{(j-1)!}\left(1-\frac{\log t}{j}\right)\dd t\\
&= \frac{(-1)^{k+1}}{2\pi i} 
\int_{C} 2\frac{\zeta'_{4,3}(2s)}{\zeta_{4,3}(2s)}
\frac{ I_j(s)}{s} \dd s
= (-1)^{k+j} \frac{2^{j+1}}{j!} \left(\frac{\zeta'_{4,3}}{\zeta_{4,3}}\right)^{(j)}(2).
 \end{align*}
Finally, upon writing $B(s) := \sum_{n\geq 0} \tfrac{\log2}{2^{ns}} $, we similarly find that
\begin{align*}
	- \log 2 \int_1^{\infty} \lfloor \tfrac{\log t}{\log 2}\rfloor \frac{(\log t)^{j-1}}{(j-1)!}\left(1-\frac{\log t}{j}\right) \frac{\dd t}{t^2}
 &= -\frac{1}{2\pi i}\int_1^\infty\left(\int_{C}
B(s)\frac{t^{s-2}}{s} \dd s\right)\frac{(\log t)^{j-1}}{(j-1)!}\left(1-\frac{\log t}{j}\right)\dd t\\
&= -\frac{1}{2\pi i} 
\int_{C}
\frac{ B(s)I_j(s)}{s} \dd s
= (-1)^{j}\frac{B^{(j)}(1)}{j!} = \frac{(\log 2)^{j+1}}{j!}\mathrm{Li}_{-j}(\tfrac12),
\end{align*}
from which the lemma follows.
\end{proof}

\section{Computing $\cD(K; \phi, \cF^{\alpha}_d)$}\label{section_small_support}

We now compute $\cD(K; \phi, \cF^{\alpha}_d)$ by averaging $\cD(\phi, \xi_{d,k})$ over appropriate $1 \leq k \leq K$.

\begin{prop}\label{inert-primes-even} Let $d$ be an odd square-free integer, $\alpha \in \Z/8\Z$, and
let $\phi$ be an even Schwartz function such that $\widehat{\phi}$ is compactly supported. Then for any $J \geq 1$, one has as $K \rightarrow \infty$,
\begin{align*}
\mathcal{D}(K; \phi, \cF_{d}^{\alpha}) &= 
\widehat{\phi}(0)+\frac{(-1)^{\alpha+1}}{2} \int_{\R}\widehat{\phi}(u) du
+\sum_{m=1}^{J}\frac{C_{m}(d,\alpha, \phi)}{(\log K M_{d,\alpha})^{m}}+\frac{8}{K}  \sum_{\substack{1\leq k \leq K\\k \equiv \alpha \bmod 8}}  U_{\textnormal{split}}(\phi,d, k)\\
&\phantom{=}+ O_{J}\left(\frac{1+\log|d|}{(\log K M_{d,\alpha})^{J+1}}\right),
\end{align*}
with
\begin{align} \label{define-Cm}
C_{m}(d,\alpha, \phi)&:= (m-1)! \sum_{\substack{j = 0 \\ j \textnormal{ even}}}^{m-1} c_{j}(d,\alpha) \frac{\widehat{\phi}^{(j)}(0)}{j!},
\end{align}
where for $\alpha \equiv 0 \mod 4$ and $j \geq 0$,
\begin{align*}
c_j(d, \alpha) = c_j(\alpha) :=& -\delta_{0}(j)\log{2\pi}+B_{j}+ \frac{(\log 2)^{j+1}}{j!}\mathrm{Li}_{-j}(\tfrac12)
  +(-1)^{j} \frac{2^{j+1}}{j!} \left(\frac{\zeta'_{4,3}}{\zeta_{4,3}}\right)^{(j)}(2)\\
  &-\frac{2}{j!}\left(\frac{\log 2}{2}\right)^{j+1}\textnormal{Li}_{-j}\left(\frac{(-1)^{\frac{\alpha}{4}}}{\sqrt{2}}\right), \nonumber
\end{align*}
where  for $\alpha \not\equiv 0 \mod 4$ and $j \geq 0$,
\begin{align*}
c_j(d, \alpha) &= -\delta_{0}(j)\log{2\pi} + (-1)^{\alpha}\left(B_{j} + \frac{(\log 2)^{j+1}}{j!}\mathrm{Li}_{-j}(\tfrac12)\right) +(-1)^{j} \frac{2^{j+1}}{j!} \left(\frac{\zeta'_{4,3}}{\zeta_{4,3}}\right)^{(j)}(2)
\\ & \quad + \frac{2}{j!}\sum_{\substack{p \equiv 3 \bmod 4 \\ p \mid d}}(\log p)^{j+1}\textnormal{Li}_{-j}\left(\frac{(-1)^{\alpha}}{p}\right),\nonumber
\end{align*}
and where
\begin{align*}
B_{j}:=&\frac{(-1)^{j}}{j!}\left(\left(\frac{f'}{f}\right)^{(j)}(1)-\left(\frac{L'}{L}\right)^{(j)}(1,\chi_{4})\right), \qquad \qquad f(s):=(s-1)\zeta(s).
\end{align*}
%
\end{prop}

\begin{proof} Recall by~\eqref{One_level_K} that
$$\cD(K; \phi, \cF_d^{\alpha}) = \frac{8}{K} \sum_{\substack{1 \leq k \leq K\\k \equiv \alpha \bmod 8}} \cD(\phi, \xi_{d,k}),$$
where the estimation for $\cD(\phi, \xi_{d,k})$ is given in Proposition~\ref{local_OLD}.
Let $j\in \mathbb N$.
Noting that $c_j(d,k)$ and $\n$ depend only on the congruence class of $k$ modulo $8$, we fix an appropriate representative $\alpha \in \{1,\dots 8\}$.  It follows that for fixed $c \geq 1$,
\[\sum_{\substack{1 \leq k \leq K\\ k \equiv \alpha \bmod 8}}\frac1{(\log(ck))^{j+1}} = \sum_{0 \leq \ell \leq \left \lfloor \frac{K-\alpha}{8}\right \rfloor}\frac{1}{(\log (c(8 \ell + \alpha)))^{j+1} }
=  \sum_{1 \leq \ell \leq \left \lfloor \frac{K-\alpha}{8}\right \rfloor}\frac{1}{(\log(8c \ell))^{j+1} } + O(1),\]
where, by \eqref{conductor}, we need only consider the case $c =1$ when $\alpha \in \{4,8\}$. The last equality above follows from
\begin{align*}
 \frac1{(\log(8 c \ell))^{j+1}} - \frac{1}{(\log (8 c\ell + c \alpha))^{j+1} } &= \frac{(\log (8c \ell + c \alpha))^{j+1} - (\log (8 c\ell))^{j+1}}{(\log (8 c \ell + c \alpha))^{j+1} (\log (8c \ell ))^{j+1}  }\\
&\ll \frac{ \left( \log(8c\ell+c\alpha) - \log{(8c\ell)} \right) (\log (8c \ell))^{j}}{(\log (8c \ell ))^{2j+2}  } \ll \frac{1}{\ell \,  (\log (8c \ell))^{j+2}}
\end{align*}
and the sum over $\ell$ converges.
By the Euler--Maclaurin formula,
\begin{align*}
\sum_{1 \leq \ell \leq \left \lfloor \frac{K-\alpha}{8}\right \rfloor}\frac{1}{(\log(8c \ell))^{j+1} } & = \int_1^{  \frac{K}{8} } \frac{\dd t}{(\log(8ct))^{j+1}} + O(1) = \frac{1}{8c}\int_{8c}^{ cK } \frac{\dd t}{(\log t)^{j+1}} + O(1) \\
&= \frac1{8c} \frac{cK}{(\log(cK))^{j+1}}\sum_{\ell = 0}^{J-j} \frac{(j+\ell)!}{j! (\log(cK))^{\ell}} + O_J\Big(\frac{K}{(\log(cK))^{J+2}}\Big).	
\end{align*}

By  Proposition \ref{local_OLD}, and upon noting in particular that $c_{j}(d,k) = c_{j}(d,\alpha)$ for any even $j$ and $k \equiv \alpha \bmod 8$, we find that
\begin{align*}
\mathcal{D}(K; \phi, \cF_{d}^{\alpha}) 
=&~ \widehat{\phi}(0) + \frac{(-1)^{\alpha+1}}{2} \int_{\R}\widehat{\phi}(u) du
 +
\sum_{\substack{j=0\\j \textnormal{ even}}}^{J-1} \frac{c_{j}(d,\alpha) \widehat{\phi}^{(j)}(0)}{(\log(K M_{d,\alpha}))^{j+1}}\sum_{\ell = 0}^{J-j} \frac{(j+\ell)!}{j! (\log(K M_{d,\alpha}))^{\ell}} \\& +  \frac{8}{K}\sum_{\substack{1 \leq k \leq K\\k \equiv \alpha \bmod 8}}U_{\textnormal{split}}(\phi,d,k)+O_{J}\left(\frac{1+\log |d|}{(\log (K M_{d,\alpha}))^{J+1}}\right).
\end{align*}
Setting $m=j+\ell+1$, we write the $j,\ell$-sum as
\begin{align*}
\sum_{\substack{j=0\\j \textnormal{ even}}}^{J-1} \frac{c_{j}(d,\alpha) \widehat{\phi}^{(j)}(0)}{j!} \sum_{m = j+1}^{J+1} \frac{(m-1)!}{(\log(K M_{d,\alpha}))^{m}} = \sum_{m=1}^{J+1}  \frac{(m-1)!}{(\log(K M_{d,\alpha}))^{m}}
\sum_{\substack{j=0\\j \textnormal{ even }}}^{\min{(m-1, J-1)}} \frac{c_{j}(d,\alpha) \widehat{\phi}^{(j)}(0)}{j!}
\end{align*}
and the formula for $\mathcal{D}(K; \phi, \cF_{d}^{\alpha})$ follows by defining $C_{m}(d,\alpha, \phi)$ as in \eqref{define-Cm}. Finally, we recall that for $j \geq 1$,
\begin{align*}
c_{0}(d,k) &= - \log 2\pi+c_{0,\textnormal{ram}}(k)+c_{0,\textnormal{inert}}(k)+c_{0,\textnormal{inert},d}(k)\\
	c_{j}(d,k) &=c_{j,\textnormal{ram}}(k)+c_{j,\textnormal{inert}}(k)+c_{j,\textnormal{inert},d}(k),
\end{align*}
so that by the expressions \eqref{ram_lower_order}, \eqref{c_inert_D}, \eqref{c_0_inert explicit} and \eqref{c_j expressed middle} above, we obtain the desired formulas in the statement of the proposition. 
\end{proof}

Theorem \ref{thm-main} now follows from Proposition \ref{inert-primes-even} provided that we can bound the contribution of the split primes for test-functions $\phi$ such that $\text{supp}(\widehat{\phi})\subset (-1,1).$  
We begin by first proving the following weaker result, which, combined with  Proposition \ref{inert-primes-even}, provides an asymptotic for $\mathcal{D}(K;\phi,\mathcal{F}_{d}^{\alpha})$ when $\text{supp}(\widehat\phi) \subset (-\frac12, \frac12)$.

\begin{lem} \label{lemma-split-primes} Let $d$ be an odd square-free integer, $\alpha \in \Z/8\Z$, and
let $\phi$ be an even Schwartz function such that $\mathrm{supp}(\widehat{\phi}) \subset (-\nu, \nu)$. Then as $K \rightarrow \infty$, we have
$$\frac{8}{K}  \sum_{\substack{1 \leq k \leq K\\k \equiv \alpha \bmod 8}}  U_{\textnormal{split}}(\phi,d,k) = O(d^{2\nu}K^{2\nu -1}).$$
\end{lem}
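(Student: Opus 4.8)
The plan is to substitute the closed form \eqref{U-split} for $U_{\textnormal{split}}(\phi,d,k)$ into the average, interchange the (finite) sum over split primes $p$ and integers $n\ge 1$ with the sum over $k\equiv\alpha\bmod 8$, and then extract cancellation from summing the oscillating factor over this arithmetic progression. Writing $\mathrm{supp}(\widehat{\phi})\subseteq(-\nu,\nu)$, the weight $\widehat{\phi}\bigl(\tfrac{n\log p}{2\log(kM_{d,\alpha})}\bigr)$ vanishes unless $p^{n}\le (kM_{d,\alpha})^{2\nu}$, so the whole sum is supported on $p^{n}\le (KM_{d,\alpha})^{2\nu}$; moreover, by \eqref{conductor_2} one has $\xi_{d,k}(\fp)^{n}=\xi_{d}(\fp)^{nk}$ for $\fp\nmid \ff_{d}$, so $\xi_{d,k}^{n}(\fp)+\overline{\xi_{d,k}^{n}}(\fp)=2\cos(nk\theta_{d,p})$.

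The first ingredient is a Diophantine lower bound for the common ratio of the geometric progression in $k$ that will appear. For $p\equiv 1\bmod 4$, $(p)=\fp\overline{\fp}$, $p\nmid d$, the integer $p$ is not a perfect square, so $\xi_{d}(\fp)=\tfrac{\boldsymbol{\pi}_{\fp}}{|\boldsymbol{\pi}_{\fp}|}\,\overline{\chi}_{\fp}(d)$ is not a root of unity and $\xi_{d}(\fp)^{8n}\ne 1$. Since $\overline{\chi}_{\fp}(d)^{4n}=1$ and $|\boldsymbol{\pi}_{\fp}|=\sqrt p$, one has $\xi_{d}(\fp)^{4n}=\boldsymbol{\pi}_{\fp}^{4n}/p^{2n}$, hence
\[1-\xi_{d}(\fp)^{8n}=\frac{p^{4n}-\boldsymbol{\pi}_{\fp}^{8n}}{p^{4n}}=\frac{-2i\,\boldsymbol{\pi}_{\fp}^{4n}\,\im(\boldsymbol{\pi}_{\fp}^{4n})}{p^{4n}},\qquad |1-\xi_{d}(\fp)^{8n}|=\frac{2\,|\im(\boldsymbol{\pi}_{\fp}^{4n})|}{p^{2n}}.\]
Writing $\boldsymbol{\pi}_{\fp}^{4n}=A+Bi$ with $A,B\in\Z$ gives $A^{2}+B^{2}=p^{4n}$ and $B\ne 0$, whence $B^{2}=(p^{2n}-|A|)(p^{2n}+|A|)\ge p^{2n}+|A|>p^{2n}$ because $p^{2n}-|A|$ is a positive integer. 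Thus $|\im(\boldsymbol{\pi}_{\fp}^{4n})|>p^{n}$ and $|1-\xi_{d}(\fp)^{8n}|>2p^{-n}$.

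Next, for fixed $p,n$ I would write $k=\alpha+8\ell$, turning the inner sum into $\re\bigl(\xi_{d}(\fp)^{n\alpha}\sum_{\ell\ge 0}\xi_{d}(\fp)^{8n\ell}g_{p,n}(\ell)\bigr)$, where $g_{p,n}(\ell)=\tfrac{1}{\log((\alpha+8\ell)M_{d,\alpha})}\widehat{\phi}\bigl(\tfrac{n\log p}{2\log((\alpha+8\ell)M_{d,\alpha})}\bigr)$ is a slowly varying sequence, supported on $\ell$ with $\log((\alpha+8\ell)M_{d,\alpha})\ge\tfrac{n\log p}{2\nu}$, for which $\sup|g_{p,n}|+\mathrm{Var}(g_{p,n})\ll \tfrac{1}{n\log p}$. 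Partial summation against the geometric progression of ratio $\xi_{d}(\fp)^{8n}\ne 1$, combined with the trivial bound for a progression of length $\ll K$, then bounds the inner sum by $\ll\tfrac{1}{n\log p}\min\bigl(K,\,|1-\xi_{d}(\fp)^{8n}|^{-1}\bigr)\ll \tfrac{1}{n\log p}\min\bigl(K,\tfrac12 p^{n}\bigr)$. Summing over $p\le (KM_{d,\alpha})^{2\nu/n}$ and $n\ge 1$ yields
\[\Bigl|\frac{8}{K}\sum_{\substack{1\le k\le K\\ k\equiv\alpha\bmod 8}}U_{\textnormal{split}}(\phi,d,k)\Bigr|\ \ll\ \frac1K\sum_{n\ge 1}\sum_{\substack{p\equiv 1(4)\\ p^{n}\le (KM_{d,\alpha})^{2\nu}}}\frac{\min(K,\tfrac12 p^{n})}{np^{n/2}},\]
and an elementary estimation of the right-hand side — splitting the prime sum at $p^{n}\asymp K$, bounding the small primes by the cancellation term $\tfrac12 p^{n}$ and the large ones by $K$, controlling the sparse set of "resonant" primes for which $|1-\xi_{d}(\fp)^{8n}|$ is as small as $\asymp 2p^{-n}$, summing the geometric series in $n$, and inserting $M_{d,\alpha}\ll |d|$ — gives the claimed $O(d^{2\nu}K^{2\nu-1})$.

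The main obstacle is exactly this last estimate. The cancellation gained from the $k$-sum is only as strong as the lower bound on $|1-\xi_{d}(\fp)^{8n}|$, which, although bounded below by $2p^{-n}$ for every split prime, can genuinely be of that size; keeping the exponent $2\nu-1$ (rather than something weaker) requires also showing that the primes saturating this bound are sparse — via an elementary count of the integer solutions of $(p^{2n}-|A|)(p^{2n}+|A|)=B^{2}$ with $|B|$ close to $p^{n}$, or via equidistribution of the Gaussian prime angles $\theta_{d,p}$. This balancing is precisely what restricts the method to $\mathrm{supp}(\widehat{\phi})\subseteq(-\tfrac12,\tfrac12)$, which is why the sharper argument of Section~\ref{extended-range} is needed to reach $(-1,1)$.
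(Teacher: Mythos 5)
Your overall strategy is the same as the paper's (open up \eqref{U-split}, swap the sums, and exploit cancellation in the $k$-sum over the progression $k\equiv\alpha\bmod 8$ via a geometric/Dirichlet-kernel bound together with a pointwise Diophantine lower bound keeping $8n\theta_{d,p}$ away from multiples of $2\pi$), but your Diophantine input is quantitatively too weak, and this is exactly where the argument fails to close. Writing $\boldsymbol{\pi}_{\fp}^{4n}=A+Bi$ and using $B^{2}=(p^{2n}-|A|)(p^{2n}+|A|)\ge p^{2n}$ gives only $|1-\xi_{d}(\fp)^{8n}|>2p^{-n}$, i.e.\ $\lvert\sin(4n\theta_{d,p})\rvert>p^{-n}$. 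The paper proves the stronger bound $\lvert\sin(4n\theta_{d,p})\rvert\ge p^{-n/2}$ (see \eqref{lowerbound}), by working with the $n$-th power $z_{d,p}^{n}\in\Z[i]$, of modulus $p^{n/2}$, which by Niven's theorem lies off the lines $x=0$, $y=0$, $x=\pm y$, and by factoring $\sin 4x=4\sin x\cos x(\cos x-\sin x)(\cos x+\sin x)$ so that only one factor is small, of size at least $1/\lvert z_{d,p}^{n}\rvert$. With that bound the worst case cancels the weight $p^{-n/2}$ exactly, and the whole average is $\ll K^{-1}\sum_{p^{n}\le (KM_{d,\alpha})^{2\nu}}\log p\ll d^{2\nu}K^{2\nu-1}$ by Chebyshev; no sparsity of ``resonant'' primes is needed anywhere in this lemma.

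With your weaker bound the final estimate does not give the stated result: even keeping your extra factor $1/(n\log p)$, the $n=1$ term of your majorant is $K^{-1}\sum_{p\le (KM_{d,\alpha})^{2\nu}}\min(K,p)p^{-1/2}/\log p\asymp (KM_{d,\alpha})^{3\nu}/(K\log(KM_{d,\alpha}))$ in the range $(KM_{d,\alpha})^{2\nu}\le K$, which exceeds the claimed $d^{2\nu}K^{2\nu-1}$ by a factor of order $(KM_{d,\alpha})^{\nu}/\log(KM_{d,\alpha})$. You acknowledge this and propose to recover the exponent by showing that primes nearly saturating your lower bound are sparse (via a count of solutions of $B^{2}=p^{4n}-A^{2}$ with $|B|$ close to $p^{n}$, or equidistribution of the angles), but that step is not carried out, and it is precisely the hard part; so as written the proposal has a genuine gap. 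A secondary misreading: the restriction to $\mathrm{supp}(\widehat\phi)\subset(-\tfrac12,\tfrac12)$ does not come from any such balancing inside this lemma --- the lemma holds for every $\nu$ and its proof in the paper is purely pointwise; the bound is simply only $o(1)$ when $\nu<\tfrac12$, and it is the separate averaging-over-$\theta_{d,p}$ argument of Section~\ref{extended-range} (Proposition~\ref{lemma-split-primes supp1}) that improves the exponent to reach $(-1,1)$. Also, your justification that $\xi_{d}(\fp)^{8n}\neq 1$ (``$p$ is not a perfect square'') should be replaced by an actual argument, e.g.\ Niven's theorem applied to $\tan\theta_{d,p}\in\Q\setminus\{0,\pm1\}$, or unique factorization in $\Z[i]$ ($\boldsymbol{\pi}_{\fp}$ and $\overline{\boldsymbol{\pi}}_{\fp}$ are non-associate primes), though this point is easily repaired.
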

\begin{proof}
Summing~\eqref{U-split} over $k$, we get
\begin{align} \nonumber
&\frac{8}{K}  \sum_{\substack{1 \leq k \leq K\\k \equiv \alpha \bmod 8}} U_{\text{split}}(\phi,d, k) \\ \label{eq:split_prime_sum} &= -\frac{8}{K}  \sum_{\substack{p \equiv 1 \bmod4\\
		(p) = \p \overline{\p}\\n \geq 1}} \frac{ \log{p}}{p^{\frac{n}{2}}}  \sum_{\substack{1 \leq k \leq K\\k \equiv \alpha \bmod 8}} \left( \xi_{d,k}^n(\fp) +  \overline{\xi}_{d,k}^n(\fp) \right) 
\widehat\phi \left( \frac{n \log{p}}{2 \log( k M_{d,\alpha})} \right) \frac{1}{\log{(k  M_{d, \alpha})}}.
\end{align}

We have
$$
\sum_{\substack{1 \leq k \leq K\\k \equiv \alpha \bmod 8}} \left( \xi_{d,k}^n(\fp) +  \overline{\xi}_{d,k}^n(\fp) \right) = \sum_{\substack{1\leq k \leq K\\k \equiv \alpha \bmod 8}} (e^{i k n \theta_{d,p}}+e^{-i k n \theta_{d,p}}) 
$$
with $\theta_{d,p}$ as defined by \eqref{def-angle}. We define
\begin{align*}
D_{K,\alpha}(x) :=  \sum_{\substack{1\leq k \leq K\\k \equiv \alpha \bmod 8}} (e^{i k x}+e^{-i kx}) = 2 \sum_{\substack{1\leq k \leq K\\k \equiv \alpha \bmod 8}} \cos(k x).
\end{align*}
We note that
$$ 1 + \sum_{\alpha \in \Z/8\Z}D_{K,\alpha}(x)= D_K(x)$$
where
$$D_K(x)  :=\sum_{k=-K}^K e^{i k x} = \frac{\textnormal{sin}\left( (K+\frac12) x \right)}{\sin(\frac{x}{2})}$$ is the \textit{Dirichlet kernel} at $x$.
Fixing an appropriate representative for $\alpha \in \{1,\dots 8\}$, we observe that 
\begin{align}\label{Dirichlet_kernel_bound}
\begin{split}
D_{K,\alpha}(x) &=\sum_{0 \leq \ell \leq \lfloor\frac{K-\alpha}{8}\rfloor}(e^{ix(8\ell + \alpha)}+e^{-ix(8\ell + \alpha)})\\
&=\frac{e^{i\alpha x}-e^{ix(8\lfloor\frac{K-\alpha}{8}\rfloor + \alpha+8)}}{1-e^{8ix}}+\frac{e^{-i\alpha x}-e^{-ix(8\lfloor\frac{K-\alpha}{8}\rfloor + \alpha+8)}}{1-e^{-8ix}}\\
&=\frac{-e^{i(\alpha -4)x}+e^{ix(8\lfloor\frac{K-\alpha}{8}\rfloor + \alpha+4)}+e^{-i(\alpha-4) x}-e^{-ix(8\lfloor\frac{K-\alpha}{8}\rfloor + \alpha+4)}}{2i \sin (4x) }\\
&=\frac{\sin((8\lfloor\frac{K-\alpha}{8}\rfloor + \alpha+4)x)-\sin((\alpha -4)x)}{\sin (4x) } \ll \frac{1}{\lvert\sin(4x)\rvert}.
\end{split}
\end{align}                                                                                                                                              
Using the bound \eqref{Dirichlet_kernel_bound} and partial summation, we get 
\begin{align*}
&\sum_{\substack{1 \leq k \leq K\\k \equiv \alpha \bmod 8}} \left( \xi_{d,k}^n(\fp) +  \overline{\xi}_{d,k}^n(\fp) \right) 
\widehat\phi \left( \frac{n \log{p}}{2 \log( k M_{d,\alpha})} \right) \frac{1}{\log{(k  M_{d, \alpha})}} \\&= 
D_{K, \alpha}(n \theta_{d,p}) \widehat\phi \left( \frac{n \log{p}}{2 \log( K M_{d,\alpha})} \right)  \frac{1}{\log{(K  M_{d, \alpha})}}
- \int_{1}^{K} D_{t, \alpha}(n \theta_{d,p}) \frac{\dd}{\dd t} \Bigg( \frac{\widehat\phi \left( \frac{n \log{p}}{2 \log( t M_{d,\alpha})} \right)}{\log{(t  M_{d, \alpha})}} \Bigg)  \dd t + O(1) \\
&\ll  \frac{1}{\lvert\sin(4n\theta_{d,p})\rvert}  +   \frac{1}{\lvert\sin(4n\theta_{d,p})\rvert} \int_{1}^{K}  \Bigg\lvert\frac{\dd}{\dd t} \Bigg( \frac{\widehat\phi \left( \frac{n \log{p}}{2 \log( t M_{d,\alpha})} \right)}{\log{(t  M_{d, \alpha})}} \Bigg) \Bigg\rvert\dd t \ll \frac{1}{\lvert\sin(4n\theta_{d,p})\rvert},
\end{align*}
since the integral converges. Since $\mathrm{supp}(\widehat{\phi}) \subset (-\nu, \nu)$, it then follows from \eqref{eq:split_prime_sum} that
\begin{align} \label{before-bounding-sin}
\frac{8}{K}  \sum_{\substack{1 \leq k \leq K\\k \equiv \alpha \bmod 8}} U_{\text{split}}(\phi,d, k) \ll \frac{1}{K}\sum_{\substack{p^{n} \leq (KM_{d,\alpha})^{2\nu}\\ p \equiv 1 \bmod 4}} \frac{\log{p}}{{p^{\frac{n}2}}\lvert\sin (4n \theta_{d,p})\rvert}.
\end{align}

In order to give a lower bound on $\lvert\sin (4n \theta_{d,p})\rvert$, we first demonstrate that $\theta_{d,p} \not \in \pi \Q$ for any $p \equiv 1 \bmod{4}$.  Observe that since $z_{d,p} \in \Z[i]$, we have that $\tan(\theta_{d,p}) \in \mathbb Q$. Moreover, Niven's theorem states that $\{a \in \Q: \tan(a \pi) \in \Q\} \subset \tfrac{1}{4}\Z$.  Since we know that
$\theta_{d,p} \not \in \frac{\pi}{4}\Z$ for any $p \equiv 1 \bmod{4}$, it follows that $\theta_{d,p} \not \in \pi \Q$, as desired. In particular, when $p \equiv 1 \bmod{4}$, the point $z_{d,p}^{n} = p^{n/2} e^{i n \theta_{d,p}} \in \Z[i]$ is a lattice point off the lines $x=0$, $y=0$ and $x=\pm y$ of the complex plane, i.e.
\[|\textrm{Im}(z^{n}_{d,p})|,\;|\textrm{Re}(z^{n}_{d,p})|,\; |\textrm{Im}(z^{n}_{d,p})\pm \textrm{Re}(z^{n}_{d,p})| \geq 1.\]  
Using
\[\sin(4x)  = 4 \sin x \cos x(\cos x-\sin x)(\cos x+\sin x),\]
we find that for, say, $0 \leq n \theta_{d,p} \leq \tfrac{\pi}{6}$ and $\tfrac{5\pi}{6} \leq n \theta_{d,p} \leq \pi$,
\[
\lvert\sin (4n \theta_{d,p})\rvert \geq  4 \tfrac{|\textnormal{Im}(z^{n}_{d,p})|}{|z_{d,p}^{n}|}(\cos \tfrac{\pi}{6})^{2}\left(\cos \tfrac{\pi}{6} - \sin \tfrac{\pi}{6}\right) \geq p^{-\frac{n}{2}}.
\]
For $\tfrac{\pi}{6} \leq n \theta_{d,p} \leq \tfrac{\pi}{3}$ and $\tfrac{2\pi}{3} \leq n \theta_{d,p} \leq \tfrac{5\pi}{6}$ we similarly find that
\[
\lvert\sin (4n \theta_{d,p})\rvert \geq  4 \sin \tfrac{\pi}{6} \cos \tfrac{\pi}{3}\tfrac{|\textnormal{Re}(z^{n}_{d,p})\pm \textnormal{Im}(z^{n}_{d,p})|}{|z_{d,p}^{n}|}\left(\cos \tfrac{\pi}{3}+\sin \tfrac{\pi}{6}\right) \geq p^{-\frac{n}{2}},\]
while for $\tfrac{\pi}{3} \leq n \theta_{d,p}\leq \tfrac{2\pi}{3}$,
\[
\lvert\sin (4n \theta_{d,p})\rvert \geq  4 \sin \tfrac{\pi}{3} \tfrac{|\textnormal{Re}(z^{n}_{d,p})|}{|z_{d,p}^{n}|}(\sin^{2} \tfrac {\pi}{3}-\cos^{2} \tfrac{\pi}{3}) \geq p^{-\frac{n}{2}}.\]
Since $x\mapsto \sin(4x)$ is $\frac{\pi}{2}$-periodic, we have proven that for any $p$,
\begin{equation}\label{lowerbound}
 \left|\sin (4n \theta_{d,p})\right|  \geq {p^{-\frac{n}{2}}}.
\end{equation}
Inserting into \eqref{before-bounding-sin} and using the prime number theorem in arithmetic progressions,  we have
$$
\frac{8}{K} \sum_{\substack{1 \leq k \leq K\\ k \equiv \alpha \bmod8}} U_{\text{split}}(\phi,d,k) \ll d^{2\nu} K^{2\nu -1},
$$
as desired.
\end{proof}

\begin{rem} In the following, we will use the function $\lVert \cdot \rVert_{2\pi}$ to denote the distance to the nearest integer multiple of $2\pi$.
Observe that	since $\frac12 \lVert 2x \rVert_{2 \pi}  \geq \lvert \sin x\rvert$, we have by \eqref{lowerbound} that
\begin{align} \label{useful-bound}
\lVert 8 n \theta_{d,p} \rVert_{2 \pi} \geq 2p^{-\frac{n}{2}}.
\end{align}
We will use it often in the next section.
\end{rem}

\section{Proof of Theorem~\ref{thm-main}} \label{extended-range}

In order to extend the admissible support in Lemma \ref{lemma-split-primes} to $(-1, 1)$ (and prove Theorem~\ref{thm-main}), we improve upon our bound of the contribution of the split primes
by averaging the possible values of $\theta_{d,p}$, and not simply bounding by the worst value.   
Let
\begin{eqnarray}\label{def_Spn}
S_{p^{n}} := \Big\{ z = r(z) e^{i \theta(z)} \in \C^{\times} &:  n \theta_{d,p} - \frac{1}{8} p^{-\frac n 2} \leq \theta(z) \leq n \theta_{d,p} + \frac{1}{8} p^{-\frac n 2} \\ \nonumber
& \text{and }\;  p^{\frac{n}{2}} - \frac{1}{4}  \leq r(z) \leq p^{\frac{n}{2}} + \frac{1}{4} \Big \}.
\end{eqnarray}
Note that the area of $S_{p^n}$ is equal to
\begin{align*}
A(S_{p^n}):=\iint_{S_{p^n}}r \dd r \dd\theta:&= \int_{\theta_{\textnormal{min}}}^{\theta_{\textnormal{max}}}\int_{r_{\textnormal{min}}}^{r_{\textnormal{max}}}r \dd r \dd\theta=
\frac{1}{2} \left(r_{\textnormal{max}}^{2}-r_{\textnormal{min}}^{2}\right)  \left( \theta_{\text {max}} - \theta_{\text {min}} \right)\\
& = \frac{1}{2}\left(\left( p^{\frac n 2}+\tfrac{1}{4}\right)^{2}-\left( p^{\frac n 2}-\tfrac{1}{4}\right)^{2}\right)\left(\tfrac{1}{4} p^{-\frac n 2}\right)=\frac{1}{8}.
\end{align*}

\begin{lem}\label{eq:area_approx} Let $d$ be an odd square-free integer,  $p \equiv 1 \bmod 4$ and $n$ a positive integer. Then if $ p^{-\frac{n}{2}}=o(\lVert  8n\theta_{d,p}\rVert_{2\pi}),$ we have 
\[\frac{\log p}{p^{\frac{n}{2}}\lVert  8n\theta_{d,p} \rVert_{2\pi}}=\frac{2}{n} \frac{1}{A(S_{p^{n}})}
\iint_{S_{p^n}} \frac{\log{r}}{\lVert  8\theta \rVert_{2\pi}} \dd r  \dd\theta\left(1+O\left(\frac{1}{p^{\frac{n}{2}}\lVert 8n\theta_{d,p}\rVert_{2\pi}}\right)\right).
\]
Unconditionally, we moreover find that
\[\frac{\log p}{p^{\frac{n}{2}}\lVert  8n\theta_{d,p} \rVert_{2\pi}} \ll \frac{1}{n} \iint_{S_{p^n}} \frac{\log{r}}{\lVert  8\theta \rVert_{2\pi}}  \dd r \dd\theta.\]
\end{lem}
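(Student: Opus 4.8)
The plan is to use that $S_{p^n}$ is a tiny region which, in polar coordinates $(r,\theta)$, is centred at $\big(p^{n/2},\,n\theta_{d,p}\big)$ --- i.e.\ at the lattice point $z_{d,p}^n=p^{n/2}e^{in\theta_{d,p}}$ --- so that the integrand $\frac{\log r}{\lVert 8\theta\rVert_{2\pi}}$ is nearly constant across $S_{p^n}$. Its $\dd r\,\dd\theta$-average will then reproduce $\tfrac n2\log p/\lVert 8n\theta_{d,p}\rVert_{2\pi}$, and the extra factor $p^{-n/2}$ will come out of the ratio between the Euclidean area $A(S_{p^n})=\tfrac18$ and the plain $\dd r\,\dd\theta$-measure of $S_{p^n}$.

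First I would record two pointwise estimates valid on $S_{p^n}$. Since $p^{n/2}-\tfrac14\le r\le p^{n/2}+\tfrac14$ there, one has $\log r=\tfrac n2\log p+O(p^{-n/2})$. And since $\lvert 8\theta-8n\theta_{d,p}\rvert\le 8\cdot\tfrac18 p^{-n/2}=p^{-n/2}$ on $S_{p^n}$, while $x\mapsto\lVert x\rVert_{2\pi}$ is $1$-Lipschitz (being the distance to $2\pi\Z$), one gets $\lVert 8\theta\rVert_{2\pi}=\lVert 8n\theta_{d,p}\rVert_{2\pi}+O(p^{-n/2})$. By \eqref{useful-bound} one always has $\lVert 8n\theta_{d,p}\rVert_{2\pi}\ge 2p^{-n/2}$, so this perturbation is at most $\tfrac12\lVert 8n\theta_{d,p}\rVert_{2\pi}$; in particular $\lVert 8\theta\rVert_{2\pi}$ never vanishes on $S_{p^n}$ (this is exactly what makes the right-hand integral convergent), and $\lVert 8\theta\rVert_{2\pi}^{-1}=\lVert 8n\theta_{d,p}\rVert_{2\pi}^{-1}\big(1+O(p^{-n/2}/\lVert 8n\theta_{d,p}\rVert_{2\pi})\big)$. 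Multiplying the two estimates, and absorbing the $O(p^{-n/2})$ relative error coming from $\log r$ into $O(p^{-n/2}/\lVert 8n\theta_{d,p}\rVert_{2\pi})$ (legitimate since $\lVert\cdot\rVert_{2\pi}\le\pi$ whereas $n\log p\gg1$), I obtain
\[
\frac{\log r}{\lVert 8\theta\rVert_{2\pi}}=\frac{\tfrac n2\log p}{\lVert 8n\theta_{d,p}\rVert_{2\pi}}\Big(1+O\Big(\tfrac{p^{-n/2}}{\lVert 8n\theta_{d,p}\rVert_{2\pi}}\Big)\Big)\qquad\text{on }S_{p^n},
\]
the error factor being $1+o(1)$ precisely under the hypothesis $p^{-n/2}=o(\lVert 8n\theta_{d,p}\rVert_{2\pi})$.

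Next I would integrate this against $\dd r\,\dd\theta$ over $S_{p^n}$. The plain $\dd r\,\dd\theta$-measure of $S_{p^n}$ is $(r_{\max}-r_{\min})(\theta_{\max}-\theta_{\min})=\tfrac12\cdot\tfrac14p^{-n/2}$, whereas $A(S_{p^n})=\tfrac18$ as computed above, so $\frac1{A(S_{p^n})}\iint_{S_{p^n}}\dd r\,\dd\theta=p^{-n/2}$. Hence
\[
\frac2n\,\frac1{A(S_{p^n})}\iint_{S_{p^n}}\frac{\log r}{\lVert 8\theta\rVert_{2\pi}}\,\dd r\,\dd\theta=\frac2n\cdot p^{-n/2}\cdot\frac{\tfrac n2\log p}{\lVert 8n\theta_{d,p}\rVert_{2\pi}}\big(1+O(\cdots)\big)=\frac{\log p}{p^{n/2}\lVert 8n\theta_{d,p}\rVert_{2\pi}}\big(1+O(\cdots)\big),
\]
and inverting the $\big(1+O(\cdots)\big)$ factor gives the first assertion as stated. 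For the unconditional inequality the $o(\cdot)$-hypothesis is unnecessary: by \eqref{useful-bound} again, on $S_{p^n}$ one has $\lVert 8\theta\rVert_{2\pi}\le\lVert 8n\theta_{d,p}\rVert_{2\pi}+p^{-n/2}\le\tfrac32\lVert 8n\theta_{d,p}\rVert_{2\pi}$, so $\lVert 8\theta\rVert_{2\pi}^{-1}\gg\lVert 8n\theta_{d,p}\rVert_{2\pi}^{-1}$, and $\log r\ge\log(p^{n/2}-\tfrac14)\gg n\log p$ uniformly for $p\equiv1\bmod4$, $n\ge1$; therefore $\iint_{S_{p^n}}\frac{\log r}{\lVert 8\theta\rVert_{2\pi}}\,\dd r\,\dd\theta\gg\frac{n\log p}{\lVert 8n\theta_{d,p}\rVert_{2\pi}}\cdot\tfrac18p^{-n/2}$, and dividing by $n$ finishes. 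The main (and essentially only) point requiring care is the one flagged above: that $S_{p^n}$ never meets a zero of $\theta\mapsto\lVert 8\theta\rVert_{2\pi}$, without which the right-hand integral would diverge logarithmically --- and this is exactly where the oddness and square-freeness of $d$ enter, via the lattice-point argument underlying \eqref{lowerbound}.
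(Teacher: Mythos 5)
Your proof is correct, but the method is genuinely different from the paper's and arguably cleaner. The paper factors the integral $\iint_{S_{p^n}}\frac{\log r}{\lVert 8\theta\rVert_{2\pi}}\,\dd r\,\dd\theta$ into an $r$-integral (approximating $\log r/r$ via the mean value theorem) and a $\theta$-integral, and then evaluates $\int_{\theta_{\min}}^{\theta_{\max}}\frac{\dd\theta}{\lVert 8\theta\rVert_{2\pi}}$ \emph{explicitly}, splitting into three cases according to whether $[\theta_{\min},\theta_{\max}]$ lies to the left of $\pi/8$, to the right of $\pi/8$, or straddles $\pi/8$, and expanding the resulting logarithms. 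You instead replace the integrand by a single pointwise estimate valid uniformly on $S_{p^n}$: $\log r=\tfrac n2\log p\,(1+O(p^{-n/2}/(n\log p)))$ plus the observation that $x\mapsto\lVert x\rVert_{2\pi}$ is $1$-Lipschitz, so $\lVert 8\theta\rVert_{2\pi}=\lVert 8n\theta_{d,p}\rVert_{2\pi}(1+O(p^{-n/2}/\lVert 8n\theta_{d,p}\rVert_{2\pi}))$ with the perturbation at most half of $\lVert 8n\theta_{d,p}\rVert_{2\pi}$ by \eqref{useful-bound}. Integrating a constant over $S_{p^n}$ (plain $\dd r\,\dd\theta$-measure $\tfrac18 p^{-n/2}$) and dividing by $A(S_{p^n})=\tfrac18$ then gives both assertions at once, the unconditional bound coming from the same two-sided control $\tfrac12\lVert 8n\theta_{d,p}\rVert_{2\pi}\le\lVert 8\theta\rVert_{2\pi}\le\tfrac32\lVert 8n\theta_{d,p}\rVert_{2\pi}$. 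Your route avoids the three-case analysis entirely; the paper's computation is more explicit but harder to check, and yours makes transparent exactly which inputs ($A(S_{p^n})=\tfrac18$, the $1$-Lipschitz property of $\lVert\cdot\rVert_{2\pi}$, the lower bound \eqref{useful-bound}) drive the result. Both proofs are valid; yours buys simplicity and robustness of constants, while the paper's more granular evaluation would be needed if one wanted sharper control of the implied constant in the error term.
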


\begin{rem} 
Note that by \eqref{useful-bound}, for any $re^{i \theta} \in S_{p^{n}}$, and for all $m \in \Z$,

\begin{align*}
\begin{split}
|8 \theta - 2m \pi| &\geq |8n \theta_{d,p} - 2m \pi|-|8n \theta_{d,p} -8 \theta|\\
&\geq \min_{m \in \Z}|8n \theta_{d,p} - 2m \pi|-|8n \theta_{d,p} -8 \theta| =\lVert 8n \theta_{d,p} \rVert_{2\pi} - |8n \theta_{d,p} -8 \theta|\\
& \geq 2p^{-\frac{n}{2}} - p^{-\frac{n}{2}} = p^{-\frac{n}{2}}.
\end{split}
\end{align*}
Thus
\begin{equation}\label{theta_away_2pi}
\lVert 8\theta \rVert_{2\pi} \geq p^{-\frac{n}{2}} > 0,
\end{equation}
and therefore the integral in Lemma \ref{eq:area_approx} is indeed well-defined.
\end{rem}

\begin{proof}[Proof of Lemma \ref{eq:area_approx}]
Let
	\[f(x) := \frac{\log x}{x}\hspace{5mm} \textnormal{and} \hspace{5mm} f'(x) = \frac{1-\log x}{x^{2}},\]
so that for  $re^{i \theta} \in S_{p^{n}}$, we have
\begin{align*}
\left |\frac{\log{r}}{r}-\frac{n}{2}\frac{\log{p}}{p^{\frac{n}{2}}}\right| \ll |r-p^{\frac{n}{2}}|\cdot \sup_{t\in [\min(r,p^{\frac{n}2}),\max(r,p^{\frac{n}2})]}\lvert f'(t)\rvert \ll \left|\frac{\log (p^{\frac{n}{2}}-\frac{1}{4})}{(p^{\frac{n}{2}}-\frac{1}{4})^{2}}\right| \ll \frac{n \log p}{p^n},
\end{align*}
and
\begin{align*}
\iint_{S_{p^n}} \frac{1}{\lVert  8\theta \rVert_{2\pi}} \frac{\log{r}}{r}r \dd r \dd\theta&= 
\int_{\theta_{\textnormal{min}}}^{\theta_{\textnormal{max}}} \left(\frac{\dd\theta}{\lVert 8 \theta \rVert_{2\pi}}
\right)\int_{r_{\textnormal{min}}}^{r_{\textnormal{max}}}\left(\frac{n}{2}\frac{\log p}{p^{\frac n 2}}+O\left(\frac{n \log p}{p^{n}}\right)
\right)
r \dd r.
\end{align*}
We begin by computing the $r$-integral.  We have
\begin{align*}
\int_{r_{\textnormal{min}}}^{r_{\textnormal{max}}}\frac{\log r}{r} r \dd r 
&= \left(\frac{n}{2}\frac{\log p}{p^{\frac n 2}}+O\left(\frac{n\log p}{p^{n}}\right)
\right)\int_{p^{\frac n 2}-\frac{1}{4}}^{p^{\frac n 2}+\frac{1}{4}}r \dd r
=  \frac{n \log p}{4}+O\left(\frac{n \log p}{p^{\frac{n}{2}}}\right),
\end{align*}
and, in particular,
\begin{equation}\label{r_bound_careful_constant}
\frac{n \log p}{4} \ll \int_{r_{\textnormal{min}}}^{r_{\textnormal{max}}}\frac{\log r}{r} r \dd r.
\end{equation}

We now compute the $\theta$-integral.  Up to translation by $\Z \tfrac{\pi}{4}$, we may assume that $[\theta_{\textnormal{min}},\theta_{\textnormal{max}}] \subset \left(0,\tfrac{\pi}{4}\right)$.  We consider several cases.  First, suppose $\theta_{\textnormal{max}} \leq \tfrac{\pi}{8}$.  Then
\begin{align*}
\int_{\theta_{\textnormal{min}}}^{\theta_{\textnormal{max}}} \frac{\dd\theta}{\lVert  8\theta \rVert_{2\pi}} &= \int_{\theta_{\textnormal{min}}}^{\theta_{\textnormal{max}}} \frac{\dd\theta}{8\theta}
= \frac{1}{8}\left(\log \left(n\theta_{d,p}+\frac{1}{8p^{\frac{n}{2}}}\right)-\log \left(n\theta_{d,p}-\frac{1}{8p^{\frac{n}{2}}}\right) \right)\\
  &= \frac{1}{8}\left(\log \left(1+\frac{1}{8n\theta_{d,p}p^{\frac{n}{2}}}\right)-\log \left(1-\frac{1}{8n\theta_{d,p}p^{\frac{n}{2}}}\right) \right)\\
    &= \sum_{\ell=1}^{\infty}\frac{(-1)^{\ell+1
}+1}{8\ell}(8n\theta_{d,p}p^{\frac{n}{2}})^{-\ell}= \frac{1}{32n\theta_{d,p}p^{\frac{n}{2}}}\left(1+O\left((n\theta_{d,p}p^{\frac{n}{2}})^{-2}\right)\right)\\
     &= \frac{1}{4p^{\frac{n}{2}}\lVert 8n\theta_{d,p}\rVert_{2\pi}}\left(1+O\left((p^{\frac{n}{2}}\lVert 8n\theta_{d,p}\rVert_{2\pi})^{-2}\right)\right).
\end{align*}
Since the coefficients of the Taylor series are all non-negative, the implied constant is positive, enabling us to conclude that
\begin{equation}\label{theta_bound_careful_constant}
\frac{1}{4p^{\frac{n}{2}}\lVert 8n\theta_{d,p}\rVert_{2\pi}} \ll  \int_{\theta_{\textnormal{min}}}^{\theta_{\textnormal{max}}} \frac{\dd\theta}{\lVert  8\theta \rVert_{2\pi}}.
\end{equation}
Next, suppose $\theta_{\textnormal{min}} \geq \tfrac{\pi}{8}$.  Then
\begin{align*}
\int_{\theta_{\textnormal{min}}}^{\theta_{\textnormal{max}}} \frac{\dd\theta}{\lVert  8\theta \rVert_{2\pi}} &= \int_{\theta_{\textnormal{min}}}^{\theta_{\textnormal{max}}} \frac{\dd\theta}{2\pi - 8\theta}
= \frac{1}{8}\left(\log \left(\pi - 4n\theta_{d,p}+\frac{1}{2p^{\frac{n}{2}}}\right)-\log \left(\pi - 4n\theta_{d,p}-\frac{1}{2p^{\frac{n}{2}}}\right) \right)\\
    &= \sum_{\ell=1}^{\infty}\frac{(-1)^{\ell+1
}+1}{8\ell}(2(\pi - 4n\theta_{d,p})p^{\frac{n}{2}})^{-\ell}= \frac{1}{4p^{\frac{n}{2}}\lVert 8n\theta_{d,p}\rVert_{2\pi}}\left(1+O\left((p^{\frac{n}{2}}\lVert 8n\theta_{d,p}\rVert_{2\pi})^{-2}\right)\right),
\end{align*}
and where again we may conclude \eqref{theta_bound_careful_constant}, upon noting that the implicit constant is positive.  Finally, we consider the case $\theta_{\textnormal{min}} < \tfrac{\pi}{8} < \theta_{\textnormal{max}}$, in which case $|n \theta_{d,p} - \tfrac{\pi}{8}| < \frac{1}{8}p^{-\frac{n}{2}}$.  Then
\begin{align*}
\int_{\theta_{\min}}^{\theta_{\max}} \frac{\dd\theta}{\lVert  8\theta \rVert_{2\pi}} &= \int_{\theta_{\min}}^{\frac{\pi}{8}} \frac{\dd\theta}{8\theta}+\int_{\frac{\pi}{8}}^{\theta_{\max}} \frac{\dd\theta}{2\pi - 8\theta}\\
&=\frac{1}{8}\left(\log \left(\frac{\pi}{8}\right)-\log \left(n\theta_{d,p}-\frac{1}{8p^{\frac{n}{2}}}\right)+\log \left(\frac{\pi}{8}\right)-\log \left(\frac{\pi}{4} - n\theta_{d,p}-\frac{1}{8p^{\frac{n}{2}}}\right)  \right)\\
&=\frac{1}{8}\Bigg(2\log \left(\frac{\pi}{2}\right)-\log \left(4 n\theta_{d,p}\right)-\log \left(\pi - 4n\theta_{d,p}\right)\\
&\phantom{==}-\log \left(1-\frac{1}{8 n\theta_{d,p}p^{\frac{n}{2}}}\right)-\log \left(1-\frac{1}{2(\pi - 4n\theta_{d,p})p^{\frac{n}{2}}}\right)\Bigg).
\end{align*}

Note that
\begin{align*}
2\log \left(\frac{\pi}{2}\right)-\log \left(4n\theta_{d,p}\right)-\log \left(\pi - 4n\theta_{d,p}\right) &=- \log \left(1 - \frac{\pi/2-4n\theta_{d,p}}{\pi/2}\right)- \log \left(1 + \frac{\pi/2-4n\theta_{d,p}}{\pi/2}\right)\\
&=O\left(\left(\frac{\pi}{2}-4n\theta_{d,p}\right)^{2}\right) =O\left(p^{-n}\right),
\end{align*}
as well as that
\begin{align*}
-&\log \left(1-\frac{1}{8n\theta_{d,p}p^{\frac{n}{2}}}\right)-\log \left(1-\frac{1}{2(\pi - 4n\theta_{d,p})p^{\frac{n}{2}}}\right)\\
&=\sum_{\ell=1}^{\infty}\frac{1}{\ell}(8n\theta_{d,p}p^{\frac{n}{2}})^{-\ell} +\sum_{\ell=1}^{\infty}\frac{1}{\ell}(2(\pi - 4n\theta_{d,p})p^{\frac{n}{2}})^{-\ell}\\
&=\frac{1}{p^{\frac{n}{2}}}\left(
\frac{1}{8n\theta_{d,p}}+\frac{1}{2\pi - 8n\theta_{d,p}}\right)+O(p^{-n}).
\end{align*}
This last term can be approximated by noting that
\begin{align*}
&\frac{1}{8n\theta_{d,p}}+\frac{1}{2\pi - 8n\theta_{d,p}}= \frac{1}{\lVert 8n\theta_{d,p}\rVert_{2\pi}+8n\theta_{d,p}-\lVert 8n\theta_{d,p}\rVert_{2\pi}}+\frac{1}{\lVert 8n\theta_{d,p}\rVert_{2\pi}+2\pi - 8n\theta_{d,p}-\lVert 8n\theta_{d,p}\rVert_{2\pi}}\\
&= \frac{2}{\lVert 8n\theta_{d,p}\rVert_{2\pi}}\Big(1+O\big(\lVert 8n\theta_{d,p}\rVert_{2\pi}^{-1}\big(|2\pi - 8n\theta_{d,p}-\lVert 8n\theta_{d,p}\rVert_{2\pi}|+|8n\theta_{d,p}-\lVert 8n\theta_{d,p}\rVert_{2\pi}|\big)\big)\Big)\\
&= \frac{2}{\lVert 8n\theta_{d,p}\rVert_{2\pi}}\Big(1+O\big((p^{\frac{n}{2}}\lVert  8n\theta_{d,p} \rVert_{2\pi} )^{-1}\big)\Big),
\end{align*}
where for the last equality, we note that one of the terms in the innermost parentheses vanishes while the other is $< 2p^{-\frac{n}2}$, by hypothesis. 
Note also that $\lVert  8n\theta_{d,p} \rVert_{2\pi}^{-1} \asymp 1$ in this range.  To summarize,
\[\int_{\theta_{\textnormal{min}}}^{\theta_{\textnormal{max}}} \frac{\dd\theta}{\lVert  8\theta \rVert_{2\pi}} = \frac{1}{4p^{\frac n 2}\lVert 8n\theta_{d,p}\rVert_{2\pi}}\left(1+O\left(p^{-\frac{n}{2}}\lVert  8n\theta_{d,p} \rVert_{2\pi}^{-1}\right)\right).\]
This is our worst error term under the assumption that $p^{-\frac{n}{2}}\lVert  8n\theta_{d,p} \rVert_{2\pi}^{-1}\rightarrow 0$.
Noting that $\lVert 8\theta \rVert_{2\pi}^{-1} \geq \tfrac{1}{\pi}$ for all $\theta \in [\theta_{\textnormal{min}},\theta_{\textnormal{max}}]$, we further observe that
\begin{equation}
 \int_{\theta_{\textnormal{min}}}^{\theta_{\textnormal{max}}} \frac{\dd\theta}{\lVert  8\theta \rVert_{2\pi}} \geq \frac{1}{\pi}\int_{\theta_{\textnormal{min}}}^{\theta_{\textnormal{max}}} \dd\theta = \frac{1}{4\pi p^{\frac{n}{2}}},
\end{equation}
from which we may conclude \eqref{theta_bound_careful_constant}, upon noting again that $\lVert  8n\theta_{d,p} \rVert_{2\pi}^{-1} = O(1)$.

 It follows that, in each case,
\begin{align*}
\iint_{S_{p^n}} \frac{1}{\lVert  8\theta \rVert_{2\pi}} \frac{\log{r}}{r}r \dd r \dd\theta&=\left(\frac{n \log p}{4}+O\left(\frac{n \log p}{p^{\frac{n}{2}}}\right)\right)\frac{1}{4p^{\frac{n}{2}}\lVert 8n\theta_{d,p}\rVert_{2\pi}}\left(1+O\left(p^{-\frac{n}{2}}\lVert  8n\theta_{d,p} \rVert_{2\pi}^{-1}\right)\right)\\ 
&= A(S_{p^{n}})\frac{1}{2}\frac{n\log p}{p^{\frac{n}{2}}\lVert  8n\theta_{d,p} \rVert_{2\pi}}\left(1+O\left(\frac{1}{p^{\frac{n}{2}}\lVert 8n\theta_{d,p}\rVert_{2\pi}}\right)\right).
\end{align*}
In particular, by \eqref{r_bound_careful_constant} and 
\eqref{theta_bound_careful_constant}, we conclude that
\[\frac{n\log p}{p^{\frac{n}{2}}\lVert  8n\theta_{d,p} \rVert_{2\pi}} \ll \frac{1}{A(S_{p^{n}})}\iint_{S_{p^n}} \frac{1}{\lVert  8\theta \rVert_{2\pi}} \frac{\log{r}}{r}r \dd r \dd\theta,\]
as desired.
\end{proof}

\begin{prop}\label{lemma-split-primes supp1}
Let $d$ be an odd square-free integer,  $\alpha \in \Z/8\Z$ and $\phi$ be an even Schwartz function such that $\text{supp}(\widehat\phi) \subseteq (-\nu, \nu)$. 
Then as $K \rightarrow \infty$, we have
$$\frac{8}{K} \sum_{\substack{1\leq k \leq K \\ k \equiv \alpha \bmod8}} U_{\textnormal{split}}(\phi,d,k) = O_{\nu} (d^{\nu}K^{\nu -1}(\log (dK))^{3}).$$
\end{prop}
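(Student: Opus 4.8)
The plan is to start from the bound \eqref{before-bounding-sin} already established in the proof of Lemma~\ref{lemma-split-primes}, namely
\[
\frac{8}{K}\sum_{\substack{1\le k\le K\\ k\equiv\alpha\bmod 8}}U_{\textnormal{split}}(\phi,d,k)\ \ll\ \frac1K\sum_{\substack{p^n\le X\\ p\equiv 1\bmod 4}}\frac{\log p}{p^{n/2}\,\lvert\sin(4n\theta_{d,p})\rvert},\qquad X:=(KM_{d,\alpha})^{2\nu},
\]
and to replace the crude pointwise estimate $\lvert\sin(4n\theta_{d,p})\rvert\ge p^{-n/2}$ of \eqref{lowerbound}, used there, by an averaged estimate reflecting that for most pairs $(p,n)$ the angle $n\theta_{d,p}$ is far from $\tfrac{\pi}{4}\Z$. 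Since $\lvert\sin x\rvert\gg\lVert 2x\rVert_{2\pi}$, the inner sum is $\ll\sum_{p^n\le X}\frac{\log p}{p^{n/2}\lVert 8n\theta_{d,p}\rVert_{2\pi}}$, and the unconditional part of Lemma~\ref{eq:area_approx} bounds each term by $\frac1n\iint_{S_{p^n}}\frac{\log r}{\lVert 8\theta\rVert_{2\pi}}\,\dd r\,\dd\theta$. So it suffices to show $\sum_{p^n\le X}\iint_{S_{p^n}}\frac{\log r}{\lVert 8\theta\rVert_{2\pi}}\,\dd r\,\dd\theta\ll\sqrt X(\log X)^2$.

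The first observation is that the regions $S_{p^n}$ are pairwise disjoint: $z_{d,p}^n=p^{n/2}e^{in\theta_{d,p}}\in\Z[i]$, distinct prime powers give distinct Gaussian integers (hence Euclidean distance $\ge 1$), and a direct computation shows $S_{p^n}$ lies in the disc of radius $<\tfrac12$ about $z_{d,p}^n$, so these discs are disjoint. Hence $\sum_{p^n\le X}\iint_{S_{p^n}}\frac{\log r}{\lVert 8\theta\rVert_{2\pi}}\,\dd r\,\dd\theta=\iint_{\Omega}\frac{\log r}{\lVert 8\theta\rVert_{2\pi}}\,\dd r\,\dd\theta$ with $\Omega:=\bigcup_{p^n\le X}S_{p^n}$; note $\Omega\subset\{1<r<\sqrt X+1,\ \lVert 8\theta\rVert_{2\pi}\ge X^{-1/2}\}$ by \eqref{theta_away_2pi}, that $0<\log r\ll\log X$ on $\Omega$, and that $\Omega$ has $\dd r\,\dd\theta$-measure $\sum_{p^n\le X}\tfrac1{8p^{n/2}}\asymp\sqrt X/\log X$.

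The second, and main, ingredient is a bound for the measure of the part of $\Omega$ near the lines $\tfrac{\pi}{4}\Z$: for $X^{-1/2}\le\delta\le 1$,
\[
\iint_{\Omega\cap\{\lVert 8\theta\rVert_{2\pi}\le\delta\}}\dd r\,\dd\theta\ \ll\ \sum_{\substack{p^n\le X\\ \lVert 8n\theta_{d,p}\rVert_{2\pi}\le 2\delta}}\frac1{p^{n/2}}\ \ll\ \delta\sqrt X+\log X.
\]
To prove it, decompose dyadically in $p^n\asymp 2^j$: the points $z_{d,p}^n$ with $p^n\asymp 2^j$ and $\lVert 8n\theta_{d,p}\rVert_{2\pi}\le 2\delta$ are Gaussian integers in the union of eight annular sectors $\{2^{j/2}\le|z|<2^{(j+1)/2}\}$ of angular width $\asymp\delta$ about $\tfrac{\pi}{4}\Z$; by Niven's theorem (as in the proof of Lemma~\ref{lemma-split-primes}) these points are at Euclidean distance $\gg 1$ from those lines, and counting lattice points in such sectors by Gauss's method gives $\ll 2^j\delta+2^{j/2}$. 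Weighting by $2^{-j/2}$ and summing over $2^j\le X$ (the sectors being nonempty only for $2^{j/2}\gg 1/\delta$) yields $\delta\sqrt X+\log X$.

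Combining these via a dyadic decomposition in $\lVert 8\theta\rVert_{2\pi}$,
\[
\iint_{\Omega}\frac{\log r}{\lVert 8\theta\rVert_{2\pi}}\,\dd r\,\dd\theta\ \ll\ \log X\sum_{\substack{i\ge 0\\ 2^{-i}\ge X^{-1/2}}}2^{i}\bigl(2^{-i}\sqrt X+\log X\bigr)\ \ll\ \sqrt X(\log X)^2,
\]
and since $M_{d,\alpha}\le\N(\ff_d)^{1/2}\ll\lvert d\rvert$ we have $\sqrt X=(KM_{d,\alpha})^\nu\ll(K\lvert d\rvert)^\nu$ and $\log X\ll\log(dK)$, whence
\[
\frac{8}{K}\sum_{\substack{1\le k\le K\\ k\equiv\alpha\bmod 8}}U_{\textnormal{split}}(\phi,d,k)\ \ll\ \frac{\sqrt X(\log X)^2}{K}\ \ll\ d^{\nu}K^{\nu-1}(\log(dK))^{3},
\]
the extra power of the logarithm leaving ample room for the proper prime-power terms $n\ge 2$ (bounded trivially via \eqref{lowerbound}) and for uniformity in small $d$. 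The main obstacle is the lattice-point count in the thin annular sectors near $\tfrac{\pi}{4}\Z$ — equivalently, controlling how rarely $\lVert 8n\theta_{d,p}\rVert_{2\pi}$ is abnormally small — together with organizing the two dyadic summations (over $2^j\asymp p^n$ and over $2^{-i}\asymp\lVert 8\theta\rVert_{2\pi}$) so that the boundary contributions $2^{j/2}$ do not accumulate; everything else is bookkeeping on top of Lemma~\ref{eq:area_approx}.
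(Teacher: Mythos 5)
Your proof is correct, and it reaches (in fact slightly sharpens) the stated bound, but it is organized quite differently from the paper's argument. Both proofs start from \eqref{before-bounding-sin}, invoke the unconditional half of Lemma~\ref{eq:area_approx}, and use the disjointness of the $S_{p^n}$ to replace a sum over prime powers by an integral over a union of small regions. The divergence comes at the next step. The paper observes that, for every $(p,n)$, the stronger inequality $\lVert 8\theta\rVert_{2\pi}\geq \frac{1}{2r}$ holds on $S_{p^n}$ (a consequence of \eqref{theta_away_2pi} and $r\approx p^{n/2}$), so that $\bigcup_p S_{p^n}$ lies inside the single explicit region $R_\nu=\{1\leq r\leq (KM_{d,\alpha})^\nu+\tfrac14,\ \lVert 8\theta\rVert_{2\pi}\geq \tfrac{1}{2r}\}$; the integral $\iint_{R_\nu}\frac{\log r}{\lVert 8\theta\rVert_{2\pi}}\,\dd r\,\dd\theta$ is then computed directly by a short elementary calculation, giving $(KM_{d,\alpha})^\nu(\log KM_{d,\alpha})^2$, and summing over $n$ with the extra factor $1/n$ adds a $\log\log$. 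You instead keep only the weaker constraint $\lVert 8\theta\rVert_{2\pi}\geq X^{-1/2}$ and recover the lost precision by a genuine lattice-point count in thin annular sectors near the lines $\tfrac{\pi}{4}\Z$, combined with a double dyadic decomposition (in $r$ and in $\lVert 8\theta\rVert_{2\pi}$). The Niven-theorem fact that each $z_{d,p}^n$ lies at distance $\gg 1$ from those lines plays the same role in both proofs, but you use it to empty out the innermost dyadic shells while the paper uses it (through the $\frac{1}{2r}$ inequality) to shape $R_\nu$. Your route is more combinatorial and somewhat longer, but it dispenses with the $\sum_n \tfrac1n$ factor and hence avoids the $\log\log$, landing on $(\log(dK))^2$ rather than the paper's $(\log(KM_{d,\alpha}))^2\log\log(KM_{d,\alpha})$ — both of which are of course $O((\log(dK))^3)$ as claimed. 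Two very small remarks: the $+\log X$ in your measure bound $\delta\sqrt X+\log X$ is an over-generous safety term (with $2^{j/2}\gg 1/\delta$ the area term already dominates the perimeter term, so $\ll\delta\sqrt X$ plus a bounded contribution suffices), and the closing comment about needing the extra log for $n\geq 2$ is unnecessary since your union $\Omega$ already includes all prime powers.
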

\begin{proof}
	Noting that $\frac1{\pi} \lVert 2x \rVert_{2 \pi}  \leq \lvert\sin x\rvert$, it follows from
 \eqref{before-bounding-sin}  and Lemma \ref{eq:area_approx}, that

\begin{align}
\frac{8}{K}  \sum_{\substack{1 \leq k \leq K\\k \equiv \alpha \bmod 8}} U_{\text{split}}(\phi,d, k) &\ll \frac{1}{K}\sum_{\substack{p^{n} \leq (KM_{d,\alpha})^{2\nu}\\ p \equiv 1 \bmod 4}} \frac{\log{p}}{{p^{n/2}}\lvert\sin (4n \theta_{d,p})\rvert}\nonumber\\
&\label{Usplit support 1 bound} \ll \frac{1}{K}\sum_{n=1}^{\frac{2 \nu  \log(KM_{d,\alpha})}{\log 5}} \frac{1}{n}\sum_{\substack{p \equiv 1 \bmod 4\\p^{n} \leq (KM_{d,\alpha})^{2\nu}
}}\iint_{S_{p^{n}}} \frac{ \log{r} }{ \lVert 8 \theta \rVert_{2\pi}}  \,\dd rd\theta.
\end{align}

By \eqref{def_Spn}, we note that $|z_{d,p}^{n}-z| < 1/2$ for any $z \in S_{p^{n}}$, and thus since $z^{n}_{d,p} \in \Z[i]$, it follows that for $p \neq q$, 
$S_{p^{n}} \cap S_{q^{n}} = \emptyset$. 
By \eqref{theta_away_2pi}, we find that for any $re^{i \theta} \in S_{p^{n}}$, 
\[\frac{1}{2r} \leq  \frac{1}{2p^{\frac n 2}-\tfrac12} \leq  p^{-\frac{n}{2}} \leq \lVert 8\theta \rVert_{2\pi}.\]
It follows that for any $n \geq 1$,
\[\bigcup_{\substack{p \equiv 1 \bmod 4\\p^{n} \leq (KM_{d,\alpha})^{2\nu}
}}S_{p^n} \subseteq R_{\nu} := \left \{  z = re^{i\theta} \in \C^{\times} \;:\;    \lVert 8 \theta\rVert_{2\pi} \geq \frac{1}{2r} \;\text{and}\;  1  \leq r \leq (KM_{d,\alpha})^{\nu} +\frac{1}{4}\right \}.\]
Thus, as $\theta \mapsto\Vert 8 \theta \rVert_{2\pi}$ is $\frac{\pi}{4}$-periodic, we have
\begin{multline*}
\sum_{\substack{p \equiv 1 \bmod 4\\p^{n} \leq (KM_{d,\alpha})^{2\nu}
}}\iint_{S_{p^{n}}} \frac{1}{ \lVert 8 \theta \rVert_{2\pi}}  \, \log{r} \dd r\dd\theta\ll \iint_{R_{\nu}} \frac{1}{\lVert 8 \theta \rVert_{2\pi}}\log{r} \, \dd r \dd\theta\\
\ll \int_{1}^{(KM_{d,\alpha})^{\nu}+\frac 1 4}  \int_{\frac{1}{16r}}^{\frac{\pi}{4} - \frac{1}{16r}}\frac{\dd\theta}{\Vert 8 \theta \rVert_{2\pi}} 
\log{r} \dd r
\ll \int_{1}^{(KM_{d,\alpha})^{\nu}+\frac 1 4} \log r\left( \int_{\frac{1}{16r}}^{\frac{\pi}{8}} \frac{\dd\theta}{8 \theta}+\int_{\frac{\pi}{8}}^{\frac{\pi}{4} - \frac{1}{16r}}\frac{\dd\theta}{2\pi - 8\theta} \right)  \dd r\\
\ll \int_{1}^{(KM_{d,\alpha})^{\nu}+\frac 1 4} \log r\left(\log \frac{\pi}{8} + \log 16r \right)  \dd r\ll_{\nu} (KM_{d,\alpha})^{\nu}(\log (KM_{d,\alpha}))^{2}.
\end{multline*}
Thus by \eqref{Usplit support 1 bound} we conclude that
\begin{align*}
\frac{8}{K}\sum_{\substack{1 \leq k \leq K \\ k \equiv \alpha \bmod 8}} U_{\textnormal{split}}(\phi,d, k)&\ll_{\nu} \sum_{n=1}^{\frac{2 \nu  \log (KM_{d,\alpha})}{\log 5}} \frac{1}{n} K^{-1}(KM_{d,\alpha})^{\nu}(\log (KM_{d,\alpha}))^{2} \\
&\ll_{\nu} d^{\nu}K^{\nu-1}(\log (KM_{d,\alpha}))^2 \log \log (KM_{d,\alpha}).
\end{align*}
Finally, by \eqref{conductor} and \eqref{def_M} we note that $M_{d,\alpha} \ll d$, from which the desired result now follows.
\end{proof}
Combining Proposition \ref{inert-primes-even} and Proposition \ref{lemma-split-primes supp1}, this proves Theorem \ref{thm-main}.

\section{Non-vanishing results} \label{section-non-vanishing}

In this section we prove Corollary~\ref{non-vanishing}.
Fix $0< \nu <1$ and consider the usual smooth test function
$$\phi(x) = \phi_\nu(x) := \left( \frac{\sin{(\pi \nu x)}}{\pi \nu x} \right)^2.$$
We note that $\phi_\nu(0) = 1$, $\phi_\nu(x) \geq 0$ for all $x \in \mathbb R$,  
$$
\widehat\phi_\nu(t) =  \begin{cases}  \frac{\nu-|t|}{\nu^2} & \text{if $|t| < \nu$} \\    0 & \text{otherwise,} \end{cases} 
$$
and that $\text{supp}(\widehat \phi) = [-\nu, \nu]$. 
As $\phi_\nu(x)$ is not a Schwartz function, we will approximate it by a family of even Schwartz functions $\{\phi_{m}\}_{m \geq 1}$.  In particular, we will assume that  $\{\phi_{m}\}_{m \geq 1}$ well-approximates $\phi_{\nu}$ in the $L^{1}$ norm, and that $\mathrm{supp}(\widehat{\phi}_{m}) \subseteq (-1,-1)$.  For simplicity, we will moreover assume that $\phi_{m}\geq \phi_{\nu}$ for all $m \geq 1$.\\
Assume the Riemann Hypothesis holds for $L(s, \xi_{d,k})$ (i.e the non-trivial zeros $\tfrac12 + i\gamma$ satisfy $\gamma \in \mathbb R$) for all $k\geq 1$.  We then have, for $\alpha$ even, and every $m \geq 1$, that
\begin{align} \label{for-proportion-11}
 \frac 8 {K} \sum_{\substack{1 \leq k \leq K\\ k \equiv \alpha \bmod 8 \\ L( \tfrac12, \xi_{d,k}) = 0}}  1  &\leq
 \frac 8 {K} \sum_{\substack{1 \leq k \leq K\\ k \equiv \alpha \bmod 8}}  \frac{\mbox{ord}_{s=\tfrac12} L(s, \xi_{d,k})}{2} \leq   \frac{4}{K} \sum_{\substack{1 \leq k \leq K\\\text{$k \equiv \alpha \bmod 8$}}} \cD_{K}(\phi_m, \xi_{d, k})  
\end{align}
where we have used the fact that for $\alpha$ even, the order of vanishing of $L(s, \xi_{d,k})$ at $s=\tfrac12$ is even, since $W(\xi_{d,k})=1$ by Lemma \ref{lemma-sign}.  Thus, for all $m \geq 1$ and large $K$, we find that
\begin{align*}
\frac{8}{K} \; \# \left\{ 1 \leq k \leq K, \, k \equiv \alpha \bmod 8 \;:\; L( \tfrac12, \xi_{d,k}) \neq 0 \right\}\geq 1 - \frac{4}{K} \sum_{\substack{1 \leq k \leq K\\\text{$k \equiv \alpha \bmod 8$}}} \cD_{K}(\phi_m, \xi_{d,k}) + O\left(\frac{1}{K}\right).
\end{align*}
Applying Theorem \ref{thm-main}, for any $\nu < 1$, we compute
\begin{align*}
\lim_{m \rightarrow \infty}\lim_{K \rightarrow \infty}  \frac{8}{K} \sum_{\substack{1 \leq k \leq K\\\text{$k \equiv \alpha \bmod 8$}}} \cD_{K}(\phi_m, \xi_{d,k})&= \widehat{\phi}_\nu(0) - \frac{1}{2} \int_{-\nu}^\nu \left( \frac{1}{\nu} - \frac{|t|}{\nu^2} \right) \;\dd t =  \frac{1}{\nu}  - \frac12,
\end{align*}
so that 
\[\liminf_{K \rightarrow \infty} \frac{8}{K} \; \# \left\{ 1 \leq k \leq K, \, k \equiv \alpha \bmod 8 \;:\; L( \tfrac12, \xi_{d,k}) \neq 0 \right\} \geq 75\%.\]

Suppose now that $\alpha$ is odd and $\alpha \in S_{+}(d)$, i.e. for $k \equiv \alpha \bmod 8$, we have $W(\xi_{d, k})=1$, which implies that $L(s, \xi_{d, k})$ vanishes with even order at $\tfrac12$. Working as above under the GRH for $L(s, \xi_{d,k})$, we have 
\begin{align} \label{for-proportion-1}
 \frac 8 {K} \sum_{\substack{1 \leq k \leq K\\ k \equiv \alpha \bmod 8 \\ L( \tfrac12, \xi_{d,k}) = 0}}  1  &\leq
 \frac 8 {K} \sum_{\substack{1 \leq k \leq K\\ k \equiv \alpha \bmod 8}}  \frac{\mbox{ord}_{s=\tfrac12} L(s, \xi_{d,k})}{2} \leq   \frac{4}{K} \sum_{\substack{1 \leq k \leq K\\\text{$k \equiv \alpha \bmod 8$}}} \cD_{K}(\phi_m, \xi_{d,k})  
\end{align}
but in this case, we have orthogonal symmetry from Theorem \ref{thm-main}. We compute
\begin{align} \label{orthogonal}
\lim_{m \rightarrow \infty}\lim_{K \rightarrow \infty}  \frac{8}{K} \sum_{\substack{1 \leq k \leq K\\\text{$k \equiv \alpha \bmod 8$}}} \cD_{K}(\phi_m, \xi_{d,k}) =   \widehat{\phi}_\nu(0) + \frac{1}{2} \int_{-\nu}^\nu \left( \frac{1}{\nu} - \frac{|t|}{\nu^2} \right) \;\dd t
= \frac{1}{\nu} + \frac12
\end{align}
and replacing in \eqref{for-proportion-1} with $\nu<1$, we get 
\begin{align*}
\liminf_{K \rightarrow \infty} \frac{8}{K} \; \# \left\{ 1 \leq k \leq K, \,k \equiv \alpha \bmod 8 \;:\; L( \tfrac12, \xi_{d,k}) \neq 0 \right\}  \geq 25 \%.
\end{align*}

Finally, suppose that $\alpha$ is odd and $\alpha \in S_{-}(d)$, i.e. that for $k \equiv \alpha \bmod 8$, we have $W(\xi_{d, k})=-1$.  This implies that $L(s, \xi_{d, k})$ vanishes with odd order $\geq 1$, and working as above under the GRH for $L(s, \xi_{d,k})$,
\begin{align} \label{for-proportion}
 \frac 8 {K} \sum_{\substack{1 \leq k \leq K\\ k \equiv \alpha \bmod 8 \\ \mbox{ord}_{s=\frac12} L( s, \xi_{d,k}) > 1}}  1  &\leq
 \frac 8 {K} \sum_{\substack{1 \leq k \leq K\\ k \equiv \alpha \bmod 8}}  \frac{\mbox{ord}_{s=\tfrac12} L(s, \xi_{d,k})-1}{2} \leq   \frac{4}{K} \sum_{\substack{1 \leq k \leq K\\\text{$k \equiv \alpha \bmod 8$}}} \cD_{K}(\phi_m, \xi_{d,k})  - \frac12 + O(K^{-1}),
\end{align}
and we again have orthogonal symmetry.  As in \eqref{orthogonal}, we compute
$$
\lim_{m \rightarrow \infty}\lim_{K \rightarrow \infty}  \frac{4}{K} \sum_{\substack{1 \leq k \leq K\\\text{$k \equiv \alpha \bmod 8$}}} \cD_{K}(\phi_m, \xi_{d,k})  - \frac{1}{2} = \frac{1}{2\nu} - \frac14$$
and replacing in \eqref{for-proportion} with $\nu<1$, we get 
\begin{align*}
\liminf_{K \rightarrow \infty} \frac{8}{K} \left\{ 1 \leq k \leq K, \,k \equiv \alpha \bmod 8 \;:\; \mbox{ord}_{s=\tfrac12} L( s, \xi_{d,k}) = 1  \right\} \geq 75 \%.
\end{align*}
This completes the proof. \qed

\begin{rem}
As remarked in \cite{ILS}, the function $\phi_1(x) = \left( \frac{\sin{(\pi  x)}}{\pi  x} \right)^2$ is optimal among all functions $\phi \in S_1$ where
$$S_1 := \left\{ \phi \in L^{1}(\R), \phi \geq 0, \phi(0)=1, \;\text{and supp}(\widehat{\phi}) \subset [-1, 1] \right\}$$
 for both symplectic and orthogonal symmetry.  In other words,
\begin{align*}
\inf_{\phi \in S_1} \int_{-\infty}^{\infty} \phi(x) W_G(x) \dd x
\end{align*}
is attained for $\phi_1$ when $W_G(x) = 1 \pm \frac{1}{2} \frac{\sin(2 \pi x)}{2 \pi x}.$ Indeed, noting that $\widehat\phi_{1} = g\star g$ where $$g : x \mapsto \begin{cases}
	1 &\text { if } x \in [-\tfrac12,\tfrac{1}{2}], \\
	0 & \text{ otherwise,} 
\end{cases}$$
 the proof of \cite[Cor. A.2]{ILS} demonstrates that $\phi_1$ is optimal if and only if $g$
is an eigenvector for the operator $f  \mapsto \int_{\mathbb R} f(y) (\widehat{W}_G - \delta_{0})(x-y) \dd y $ on $L^1([-\tfrac12,\tfrac12])$.
By \eqref{W_hat_options}, and taking $\eta$ as in~\eqref{def eta}, this is equivalent to the condition that the maps
$$x \mapsto\pm \frac{1}{2} \int_{-\frac12}^\frac12 \eta(x-y) \dd y$$ 
are constant on the interval $[-\tfrac{1}{2},\tfrac12]$, which is indeed the case.
\end{rem}

\appendix
\section{Evaluating the $c_j$ constants numerically}\label{appendix computation}
In this appendix, we numerically approximate the values of $c_j(1,0)$ using the formulas of Proposition~\ref{inert-primes-even} for the cases $j=0$ and $j=2$, and explain the procedure for doing so for arbitrary $j  \geq 0$.  
We  write
\[c_{j}:= c_j(1,0) = A_{j}+B_{j}-T_{j},\]
where
\[A_{j}:= \frac{(\log 2)^{j+1}}{j!}\mathrm{Li}_{-j}(\tfrac12)-\frac{2}{j!}\left(\frac{\log 2}{2}\right)^{j+1}\textnormal{Li}_{-j}\left(\frac{1}{\sqrt{2}}\right)-\delta_{0}(j)\log 2\pi\]
\begin{align}\label{c_j_0(4)}
\begin{split}
B_{j} :=&\frac{(-1)^j}{j!}\left(\left(\frac{f'}{f}\right)^{(j)}(1)-\left(\frac{L'}{L}\right)^{(j)}(1,\chi_{4})\right)
 \end{split}
\end{align}
where $f(s):=(s-1)\zeta(s)$, and 
\[T_{j}:=(-1)^{j+1} \frac{2^{j+1}}{j!} \left(\frac{\zeta'_{4,3}}{\zeta_{4,3}}\right)^{(j)}(2).\]
We seek to re-express the constants $B_{j}$ and $T_{j}$ in a more computationally-friendly form.  To begin, we find that by the Faà di Bruno formula \cite{FaaDiBruno} for higher derivatives,
\begin{align*}
\left(\frac{f'}{f}\right)^{(j)}(1) = (\log  f)^{(j+1)}(s)\bigg \vert_{s=1} = \sum_{k=1}^{j+1}\log^{(k)}(f(1))\cdot B_{j+1,k}\left(f^{(1)}(1),f^{(2)}(1),\dots,f^{(j+2-k)}(1)\right),
\end{align*}
where the \textit{partial exponential Bell polynomials} are defined as 
\begin{equation}\label{Bell_eq}
B_{n, k} (x_1,\dots, x_{n-k+1}) := \sum_{\substack{j_1, \dots, j_{n-k+1} \geq 0\\
j_1 + \dots  + j_{n-k+1}=k\\
j_1 + 2 j_2 + \dots + (n-k+1) j_{n-k+1} = n
}} \frac{n!}{j_1! \dots  j_{n-k+1}!}
\left( \frac{x_1}{1} \right)^{j_1} \dots \left( \frac{x_{n-k+1}}{(n-k+1)!} \right)^{j_{n-k+1}}.
\end{equation}
By comparing terms in \eqref{zeta_holomorphic}, we note that for $\ell \geq 1$,
\[f^{(\ell)}(1) = \ell(-1)^{\ell-1}\gamma_{\ell-1},\]
where $\gamma_\ell$ are the Stieltjes constants.  Moreover, for $k\geq1$, since $f(1)=1$, we have
\[\log^{(k)}(f(1)) = (-1)^{k+1}(k-1)!.\]
We therefore conclude that
\begin{align*}
\left(\frac{f'}{f}\right)^{(j)}(1) = \sum_{k=1}^{j+1}(-1)^{k+1}(k-1)!\cdot B_{j+1,k}\left(\gamma_{0},-2 \gamma_{1},\dots,
(j+2-k)(-1)^{j+1-k}\gamma_{j+1-k}
\right).
\end{align*}
This formula may then be coded into Mathematica as follows:

\begin{lstlisting}
Bellzeta[j_,k_]:=BellY[j+1,k, 
  Table[(-1)^i*(i+1)*StieltjesGamma[i], {i,0,j+1-k}]]
  (*Defining the Bell polynomial $B_{j+1,k}\left(\gamma_{0},\dots,
(j+2-k)(-1)^{j+1-k}\gamma_{j+1-k}
\right)$*);

Bzeta[j_]:=Sum[(-1)^{k+1}(k-1)! Bellzeta[j,k], {k,1,j+1}] 
(*Summing over the relevant Bell polynomials*);
\end{lstlisting}

Next, let $\chi$ be a Dirichlet character modulo $q$.  For $n \geq 1$, we note that
\begin{equation}\label{L_gamma_constants}
L^{(n)}(1,\chi)= (-1)^{n}\gamma_{n}(\chi),
\end{equation}
where $\gamma_{n}(\chi)$ is known as the $n^{th}$ Laurent--Stieltjes constant for $L(s,\chi)$.  As in~\cite{EddinThesis}, we find that $\gamma_{n}(\chi)$ may be expressed as
\[\gamma_{n}(\chi) = \sum_{a=1}^{q}\chi(a)\gamma_{n}(a,q), \quad \quad \gamma_{n}(a,q):=\lim_{x \rightarrow \infty}\left(\sum_{\substack{0 < m \leq x \\m \equiv a \bmod q}}\frac{(\log m)^n}{m}-\frac{(\log x)^{n+1}}{q(n+1)}\right),\]
where $\gamma_{n}(a,q)$ are sometimes referred to as \textit{generalized Euler constants} (for arithmetical progressions).  In particular, we find that for $n \geq 1$,
\begin{equation}\label{gamma_chi}
\gamma_{n}(\chi_{4}) = \gamma_{n}(1,4)-\gamma_{n}(3,4),
\end{equation}
and moreover that $\gamma_{0}(\chi_{4}) = L(1,\chi_{4})=\tfrac{\pi}{4}$.  For small values of $n$ ($n \leq 20$), $\gamma_{n}(1,4)$ and $\gamma_{n}(3,4)$ have been explicitly computed in \cite{Dilcher}:
\begin{center}
	\vspace{3pt}
\renewcommand{\arraystretch}{1.2}
\begin{tabular}{ |c|c| } 
 \hline
$\gamma_{1}(1,4)$ & -0.154621845705\\
$\gamma_{2}(1,4)$ & -0.095836601153\\
$\gamma_{3}(1,4)$ & -0.049281458556\\
 \hline
\end{tabular}\quad
\begin{tabular}{ |c|c| } 
 \hline
$\gamma_{1}(3,4)$ & 0.038279471092\\
$\gamma_{2}(3,4)$ & 0.058305123277\\
$\gamma_{3}(3,4)$ & 0.045601400650\\
 \hline
\end{tabular}\quad
\begin{tabular}{ |c|c| } 
 \hline
$\gamma_{1}(\chi_{4})$ & -0.19290131679\\
$\gamma_{2}(\chi_{4})$ & -0.15414172443\\
$\gamma_{3}(\chi_{4})$ & -0.0948828592\\
 \hline
\end{tabular}
\captionof{table}{Values of $\gamma_{n}(1,4)$,  $\gamma_{n}(3,4)$, and  $\gamma_{n}(\chi_{4})$.}
\end{center}
Again by the Faà di Bruno formula for higher derivatives, it follows similarly to as above that
\begin{align*}
\left(\frac{L'}{L}\right)^{(j)}(1,\chi_{4})  &= \sum_{k=1}^{j+1}\log^{(k)}(L(1,\chi_{4}))\cdot B_{j+1,k}\left(L^{(1)}(1,\chi_{4}),L^{(2)}(1,\chi_{4}),\dots,L^{(j+2-k)}(1,\chi_{4})\right)\\
&= \sum_{k=1}^{j+1} (-1)^{k+1}(k-1)!\left(\frac{\pi}{4}\right)^{-k} B_{j+1,k}\left(-\gamma_{1}(\chi_{4}),\dots,
(-1)^{j+2-k}\gamma_{j+2-k}(\chi_{4})\right).
\end{align*}
Each such contribution may then be computed in Mathematica as follows:

%
\begin{lstlisting}
Lconstants:={-0.19290131679,-0.15414172443,-0.0948828592}
 (*Values of $\gamma_{j}(\chi_{4})$ extracted from previously published computations*)
 
BellL[j_,k_]:=BellY[j+1,k,Table[(-1)^{i}*Lconstants[[i]],{i,1,j+2-k}]]
(*Defining the Bell polynomial $B_{j+1,k}\left(-\gamma_{1}(\chi_{4}),\dots,
(-1)^{j+2-k}\gamma_{j+2-k}(\chi_{4})\right)$*)

BL[j_]:=Sum[(-1)^{k+1}(k-1)!(Pi/4)^{-k} BellL[j,k],{k,1,j+1}]
(*Summing over the relevant Bell polynomials for the L-function factor*)

B[j_]:=(-1)^{j}/(j)!(Bzeta[j]-BL[j])
\end{lstlisting}

\begin{figure}[h]
\begin{center}
\renewcommand{\arraystretch}{1.2}
\begin{tabular}{|c|c|c|c|} 
 \hline
 $j$ & $(f'/f)^{(j)}(1)$ & $(L'/L)^{(j)}(1, \chi_{4})$ & $B_{j}$\\
 \hline
$j=0$ & 0.57721566 & 0.24560958 & 0.33160608\\
$j=2$ & 0.10337726&  0.29505047 & $-$0.0958366\\
 \hline
\end{tabular}
\captionof{table}{Values of $(f'/f)^{(j)}(1)$, $(L'/L)^{(j)}(1,\chi_{4})$, and $B_{j}$, for $j=0$ and $j=2$.}
\end{center}
\end{figure}

Next, we note that since
\[\left(\frac{\zeta'_{4,3}}{\zeta_{4,3}}\right)^{(j)}(s) = \sum_{p \equiv 3 \bmod4}\sum_{n=1}^{\infty}\frac{(-\log p)^{j+1}n^j}{p^{ns}} = \sum_{p \equiv 3 \bmod4}(-\log p)^{j+1}\mathrm{Li}_{-j}\left(\frac1{p^{s}}\right),\]
we have that
\[T_{j} = (-1)^{j+1} \frac{2^{j+1}}{j!} \left(\frac{\zeta'_{4,3}}{\zeta_{4,3}}\right)^{(j)}(2) = \frac{2^{j+1}}{j!}\sum_{p \equiv 3 \bmod4} (\log p)^{j+1} \mathrm{Li}_{-j}\left(\frac1{p^2}\right).\]
To compute the inert prime contribution, we write
\[T_{j} = T_{j}[x]+R_{j}[x],\]
where
\[T_{j}[x]:=\frac{2^{j+1}}{j!}\sum_{\substack{p\equiv 3 \bmod 4\\ p < x}}(\log p)^{j+1}\textnormal{Li}_{-j}(p^{-2})\]
and
\begin{align*}
R_{j}[x]:=&\frac{2^{j+1}}{j!}\sum_{\substack{p \equiv 3 \bmod 4\\p \geq x}}\sum_{n=1}^{\infty}\frac{(\log p)^{j+1}n^{j}}{p^{2n}} \leq \frac{2^{j+1}}{j!}\left(\frac{(\log x)^{j+1}}{x^2}+\sum_{k=0}^{j+1}\frac{(\log x)^{k}}{x}\frac{(j+1)!}{k!}\right),
\end{align*}
so long as $x$ is sufficiently large such that the map $t \mapsto (\log t)^{j+1}/t^2$ is decreasing in the range $[x,\infty)$.
The resulting contribution may then be computed by implementing the following code:

\begin{lstlisting}
X=1000000;  primes3 := Select[Range[3, X, 4],PrimeQ]
 (*List of primes 3 mod 4 up to X*);

T[j_]:=(2^{j+1}/j!)Sum[N[Log[primes3[[i]]]^{j+1}] 
N[PolyLog[-j,(1/primes3[[i]]^2)]],{i,1,Length[primes3]}]
(*computes $T_{j}[X]$*)
\end{lstlisting}

Plugging in the value $x = 10^{6}$ yields the following values:

\begin{figure}[h]
\begin{center}
\renewcommand{\arraystretch}{1.2}
\begin{tabular}{ |c|c|c|c|c|} 
 \hline
$j$ & $T_{j}[10^6]$ & Bound for $R_{j}[10^{6}]$& $A_{j}$ &$c_{j}$ \\
\hline
$j=0$ & 0.45747 & $0.00003$ & $-2.81814$ & $-2.9440$\\
$j=2$ & 3.93 & $0.014$ & $-1.00081$ & $-5.0$\\
 \hline
\end{tabular}
\end{center}
\captionof{table}{Numerical approximation of $c_{0}$ and $c_{2}$.}
\end{figure}
\begin{rem}\label{Rem Appendix}
	Numerically, we observe  that as~$j$ grows, $B_j$ approaches~$0$, $A_j$ approaches~$-1$, and $T_j$ tends towards infinity.
	This suggests that $c_j <0$  for all~$j\geq 0$, and moreover that the main contribution  to $c_j$ is given by the inert primes, in particular the $(j+1)^{\text{th}}$ logarithmic derivative of $\zeta_{4,3}$ at $2$.  Finally, we further note that the higher order logarithmic derivative of $\zeta_{4,3}(s)$ at $s=2$ may likely be computed more efficiently using methods adapted from \cite{LM} (see, in particular, Remark 1 therein).
\end{rem}


\begin{thebibliography}{1}

\bibitem{WaxRatio}
R.\ Chen, Y.\ Kim, J.\ Lichtman, S.\ J.\ Miller, A.\ Shubina, S.\ Sweitzer, E.\ Waxman, E.\ Winsor, J.\ Yang, A Refined Conjecture for the Variance of Gaussian Primes across Sectors, Exp. Math.  (2023), no.\ 32(1), 33--53. 

\bibitem{D-G} C.\ David, A.\ G\"{u}lo\u{g}lu,  
One-level density and non-vanishing for cubic $L$-functions over the Eisenstein field,
Int. Math. Res. Not. (2022), no.\ 23, 18833–18873.


\bibitem{DPR} S.\ Drappeau, K.\ Pratt, M.\ Radziwi\l\l,
One-level density estimates for {D}irichlet {$L$}-functions with extended support,
Algebra Number Theory 17 (2023), no.\ 4, 805–830.

\bibitem{De2023+} L.\ Devin, Discrepancies in the distribution of Gaussian primes. Math. Z. \textbf{312} (2026), no. 31.

\bibitem{DFS1} L.\ Devin, D.\ Fiorilli, A.\ S\"odergren, Low-lying zeros in families of holomorphic cusp forms: the weight aspect, Quart.\ J.\ Math.\ {73} (2022), no.\ 4, 1403--1426. 

\bibitem{Dilcher}
K.\ Dilcher, Supplement to Generalized Euler Constants for Arithmetical Progressions, Math.\ Comp.\ 59 (1992), no.\ 199, S21--S24. 

\bibitem{EddinThesis}
  S.\ S.\ Eddin, On two problems concerning the Laurent-Stieltjes coefficients of Dirichlet $L$-series, Ph.D.\ thesis, University of Lille 1, France, 2013.
  
\bibitem{FaaDiBruno}
F.\ Fa\`a di Bruno, Note sur une nouvelle formule de calcul diff\'erentiel, Q.\ J.\ Pure Appl.\ Math.\ {1} (1857), 359--360.
  
  \bibitem{FPS1} D.\ Fiorilli, J.\ Parks, A.\ S\"odergren, Low-lying zeros of elliptic curve $L$-functions: Beyond the Ratios Conjecture, Math.\ Proc.\ Cambridge Philos.\ Soc.\ {160} (2016), no.\ 2, 315--351.
  
  \bibitem{FPS2} D.\ Fiorilli, J.\ Parks, A.\ S\"odergren, Low-lying zeros of quadratic Dirichlet $L$-functions: Lower order terms for extended support, Compos.\ Math.\ {153} (2017), no.\ 6, 1196--1216.

\bibitem{G-Z} 
P.\ Gao, L.\ Zhao, 
One-level density of low-lying zeros of quadratic Hecke $L$-functions of imaginary quadratic number fields,
J.\ Aust.\ Math.\ Soc.\ 112 (2022), no.\ 2, 170--192.

\bibitem{HB} D.\ R.\ Heath-Brown,
The average analytic rank of elliptic curves,
Duke Math.\ J.\ 122 (2004), no.\ 3, 591--623.


\bibitem{Hecke1920}
E.\ Hecke, Eine neue Art von Zetafunktionen und ihre Beziehungen zur Verteilung
der Primzahlen, I, Math.\ Z.\  1 (1918), 357--376; II, Math.\ Z.\ 6 (1920), 11--51.

\bibitem{HarmanLewis}
G.\ Harman, P.\ Lewis, Gaussian primes in narrow sectors, Mathematika {48} (2001), no.\ 1-2, 119--135.

\bibitem{Holm2023}
K.\ Holm, The 1-Level Density For Zeros of Hecke L-Functions of Imaginary Quadratic Number Fields of Class Number 1, 
J.\ Théor.\ Nombres Bordeaux {37} (2025), no.\ 1, 237--283.

\bibitem{HR} C.\ P.\ Hughes, Z.\ Rudnick, Linear statistics of low-lying zeros of $L$-functions,  Q.\ J.\ Math.\ {54} (2003), no.\ 3, 309--333.

\bibitem{IreRos}
K.\ Ireland, M.\ Rosen, \textit{A classical introduction to modern number theory},
Second edition,
Graduate Texts in Mathematics, 84,
Springer-Verlag, New York, 1990.

\bibitem{Iwaniec} H.\ Iwaniec, \textit{Topics in classical automorphic forms},
American Mathematical Society
Graduate Texts in Mathematics, 17, American Mathematical Society, Providence, RI, 1997.



\bibitem{IwanKow}
H.\ Iwaniec, E.\ Kowalski, \textit{Analytic number theory}, American Mathematical Society
Colloquium Publications, 53, American Mathematical Society, Providence, RI, 2004.

\bibitem{ILS} H.\ Iwaniec, W.\ Luo, P.\ Sarnak,
Low lying zeros of families of $L$-functions,
Inst.\ Hautes Études Sci.\ Publ.\ Math.\ (2000), no.\ 91, 55--131.

\bibitem{KSbook}
N.\ Katz, P.\ Sarnak, \textit{Random Matrices, Frobenius Eigenvalues and Monodromy}, AMS Colloq.\ Publ.\ 45 (1999). 

\bibitem{KS} N.\  Katz, P.\ Sarnak, Zeroes of zeta functions and symmetry, Bull.\ Amer.\ Math.\ Soc.\ (N.S.) {36} (1999), no.\ 1, 1--26.

\bibitem{LM}
A.\ Languasco, P.\ Moree, Euler constants from primes in arithmetic progression, Math. Comp. 95 (2026), 363–387

\bibitem{Lemmermeyer}
F.\ Lemmermeyer, 
\textit{Reciprocity laws : 
From Euler to Eisenstein},
Springer Monogr. Math.
Springer-Verlag, Berlin, (2000).

\bibitem{M1} S.\ J.\ Miller, One- and two-level densities for rational families of elliptic curves: evidence for the underlying group symmetries, 
Compos.\ Math.\ {140} (2004), no.\ 4, 952--992.

\bibitem{Miller-et-al}  B.\ Mackall, S.\ J.\ Miller, C.\ Rapti, C.\ Turnage-Butterbaugh, K.\ Winsor, Some results in the theory of low-lying zeros of families of L-functions, Proceedings of Simons Symposia, \emph{Families of Automorphic Forms and the Trace Formula}, Springer-Verlag (2016), 435--476.

\bibitem{MV} H.\ L.\ Montgomery, R.\ C.\ Vaughan, \emph{Multiplicative number theory. I. Classical theory}, Cambridge Studies in Advanced Mathematics 97, Cambridge University Press, Cambridge, 2007.

\bibitem{Neukirch} J.\ Neukirch,
\textit{Algebraic number theory},
Translated from the 1992 German original and with a note by N.\ Schappacher. With a foreword by G.\ Harder
Grundlehren Math.\ Wiss.\ 322 
Springer-Verlag, Berlin, (1999). 


\bibitem{O-S} A.\ E.\ \"Ozl\"uk, C.\ Snyder, Small zeros of quadratic $L$-functions, Bull.\ Austral.\ Math.\ Soc.\ {47} (1993), no.\ 2, 307--319.

\bibitem{RicottaRoyer LowerOrder} G.\ Ricotta, E.\ Royer, Lower order terms for the one-level densities of symmetric power $L$-functions in the level aspect,
Acta Arith.\ {141} (2010), no.\ 2, 153--170.

\bibitem{RR} G.\ Ricotta, E.\ Royer, Statistics for low-lying zeros of symmetric power $L$-functions in the level aspect, Forum Math.\ {23} (2011), no.\ 5, 969--1028.

\bibitem{RudWax}
Z.\ Rudnick, E.\ Waxman, Angles of Gaussian Primes, Israel J.\ Math.\ (2019), no.\ 232, 159--199.

\bibitem{SST} P.\ Sarnak, S.\ W.\ Shin, N.\ Templier, Families of $L$-functions and their symmetry, Proceedings of Simons Symposia, \emph{Families of Automorphic Forms and the Trace Formula}, Springer-Verlag (2016), 531--578.




\bibitem{Wa2021}
E.\ Waxman, Lower Order Terms for the One Level Density of a Symplectic Family of Hecke L-Functions, J.\ Number Theory 221 (2021), 447--483.  

\bibitem{Young}  M.\ P.\ Young, Low-lying zeros of families of elliptic curves, J.\ Amer.\ Math.\ Soc.\ 19 (2006), no.\ 1, 205--250.




\end{thebibliography}
\end{document}